\newtheorem{thm}{Theorem}
\newtheorem{lem}{Lemma}
\newtheorem{prop}{Proposition}
\newtheorem{crly}{Corollary}
\newtheorem{assm}{Assumption}
\newtheorem{rmk}{Remark}
\DeclareMathOperator{\Hinf}{\mathcal{H}_{\infty}}
\DeclareMathOperator{\G}{\mathcal{G}}
\newcommand{\RNum}[1]{\uppercase\expandafter{\romannumeral #1\relax}}
\definecolor{bluee}{rgb}{0.00,0.45,0.74}
\patchcmd{\MaketitleBox}{\footnotesize\itshape\elsaddress\par\vskip36pt}{\footnotesize\itshape\elsaddress\par\parbox[b][36pt]{\linewidth}{\vfill\hfill\textnormal{
\fbox{
Accepted for publication: European Journal of Control. (DOI: 10.1016/j.ejcon.2021.06.032) \textcopyright 2021 Elsevier.
}
}\hfill\null\vfill}}{}{}%
\journal{European Journal of Control}
\begin{document}

\begin{frontmatter}

\title{$\mathcal{H}_\infty$ Network Optimization for Edge Consensus\tnoteref{footnote}}% Title, preferably not more 
% than 10 words.

\tnotetext[footnote]{A preliminary version of this paper was presented at the 2021 European Control Conference. O. Farhat is a recipient of the 2020 ESED scholarship by the Global Sustainable Electricity Partnership. D. Abou Jaoude acknowledges the support of the University Research Board (URB) at the American University of Beirut (AUB). This research is supported by the SNSF through NCCR Automation and the ETH  Foundation.}

\author[AUB]{Omar Farhat}\ead{off03@mail.aub.edu}    % Add the 
%\fntext[scholar]{O. Farhat is a recipient of the 2020 ESED scholarship by the Global Sustainable Electricity Partnership.}
\author[AUB]{Dany Abou Jaoude\corref{corres}}\ead{da107@aub.edu.lb}               % e-mail address 
\cortext[corres]{Corresponding author.}
\author[ETH]{Mathias Hudoba de Badyn}\ead{mbadyn@ethz.ch}  % (ead) as shown

\address[AUB]{Control and Optimization Lab, Department of Mechanical Engineering, Maroun Semaan Faculty of Engineering and Architecture, American University of Beirut, Lebanon}  % Please supply                                              
\address[ETH]{Automatic Control Laboratory, Swiss Federal Institute of Technology, Z\"{u}rich, Physikstrasse 3, 8092, Switzerland}

\begin{abstract}

This paper examines the $\Hinf$ performance problem of the edge agreement protocol for networks of agents operating on independent time scales, connected by weighted edges, and corrupted by exogenous disturbances. 
$\Hinf$-norm expressions and bounds are computed that are then used to derive new insights on network performance in terms of the effect of time scales and edge weights on disturbance rejection. 
We use our bounds to formulate a convex optimization problem for time scale and edge weight selection. Numerical examples are given to illustrate the applicability of the derived $\Hinf$-norm bound expressions, and the optimization paradigm is illustrated via a formation control example involving non-homogeneous agents. 
\end{abstract}

\begin{keyword}
Network systems \sep Weighted graphs \sep Time-scaled agents \sep Edge consensus \sep $\Hinf$-norm minimization \sep Semidefinite programming.
\end{keyword}

\end{frontmatter}

\section{INTRODUCTION}
Many natural and engineered systems operate over protocols or dynamics over networks.
The consensus algorithm is a famous distributed information-sharing protocol over a network, used in many applications ranging from wind farm optimization~\cite{annoni2018framework}, robotics and autonomous spacecraft~\cite{Rossi2018}, sensor networks and compressed sensing~\cite{Olfati-Saber2005,Schmidt2009}, and multi-agent systems~\cite{Behavior2007,Tanner2004}.
There has been extensive research on how the structure of the network affects control-theoretic properties of the collective system, such as optimal edge weight selection for $\mathcal{H}_2$ control~\cite{Chapman2015,Foight2019ts}, as well as the relationship between network symmetries and controllability~\cite{Rahmani2009a,Alemzadeh2017}.

The present paper is focused on system-theoretic robustness measures analyzing the consensus algorithm, and in particular how one may design robust consensus networks.
The state-of-the-art in the literature has focused on the $\mathcal{H}_2$ performance measure, which describes how much energy enters a system via an impulse response, or equivalently, how well a system rejects noise.
In leader-follower consensus, the $\mathcal{H}_2$-norm captures the notion of effective resistance across the network~\cite{Barooah2008,Chapman2013a}.
This paradigm proves useful in analyzing the effects of optimizing edge weights, node time scales, and the graph structure for $\mathcal{H}_2$ performance~\cite{Chapman2015,Foight2019ts,Zelazo2011a,HudobadeBadyn2021,DeBadyn2020}.
The related concept of coherence has been used to develop local feedback laws and for leader selection~\cite{Bamieh2012,Patterson2010a,Patterson2014}.

While the $\mathcal{H}_2$-norm captures noise-rejection properties of the system, in the present work we examine the $\mathcal{H}_\infty$-norm, which is related to how finite-energy signals and disturbances cause the system to deviate from consensus~\cite{Zelazo2011a}.
The $\mathcal{H}_\infty$-norm has also found use in combating time delays in consensus problems~\cite{Lin2008}, as well as in distributed state estimation over sensor networks~\cite{Shen2010}.

When only relative measurements across edges in the network are available to the agents, a minimal representation of consensus can be considered, which we refer to as edge consensus \cite{Zelazo2011a}. This minimal representation is characterized by the edge Laplacian of the graph, and is used in formation flight and sensor networks~\cite{Sandhu2005,Sandhu2009,Smith2005}. The edge consensus protocol has been studied in \cite{Foight2019a}, and extensions of edge consensus for matrix-weighted networks were considered in~\cite{Foight2019ts}.

Networks often contain non-homogeneous agents that operate on different time scales.
Examples of agent-based dynamics operating on multiple time scales include coupled oscillators~\cite{nawrath2010distinguishing}, neuronal networks~\cite{honey2007network}, and social networks~\cite{flack2012multiple}.
Theoretical tools exploiting time scale analysis have also been developed for various systems, such as neural network-based control for robotics~\cite{yamashita2008emergence}, model reduction and coherency in power systems~\cite{romeres2013novel,chow1990singular}, epidemic spreading~\cite{lewien2019time}, and composite (or layered) consensus networks~\cite{Chapman2016}. Interaction with time-scaled networks was discussed in \cite{Foight2019}, and consensus under time scale separation was presented in \cite{Awad2018}.

A natural question to ask is what distribution of time scales in a network yields resiliency, either to disturbances, noise, or network failure. A common simplifying approach to address such theoretical questions is to assume that the agents in the network are single integrator units \cite{Mesbahi2010}. This approach is followed in the present work. Our paper also assumes the same choice of covariance matrices as in \cite{Foight2019ts,Foight2019a}. Based on this assumption, one contribution in our paper is to quantify the effect of time scales in the edge consensus protocol on disturbance rejection, using the $\mathcal{H}_\infty$-norm as the performance measure. Related works in the literature considering edge consensus for time-scaled networks are~\cite{Foight2019ts,HudobadeBadyn2021,Foight2019a}.

Our present work focuses on examining the $\mathcal{H}_\infty$-norm in the case of edge consensus on both edge-weighted and time-scaled networks.
The contributions of our paper are:
\begin{enumerate}
    \item We derive expressions of and bounds on the $\mathcal{H}_\infty$-norm in terms of edge weights and time scales.
    \item We make use of the derived expressions and bounds to provide new insights on $\Hinf$ network performance; namely, that larger edge weights and faster time scales lead to a smaller $\mathcal{H}_\infty$-norm.
    \item We show how the $\Hinf$-norm bounds provide a suitable proxy for formulating optimization problems for the selection of edge weights and time scales.
    \item We provide numerical examples to illustrate:
      \begin{enumerate}
        \item The validity and conservativeness of the derived bound expressions
        \item A Pareto optimal front as a function of the three tuning parameters in our edge weight and time scale optimization framework
        \item The performance improvement under our edge weight and time scale optimization framework in a formation control example.
      \end{enumerate}
\end{enumerate}

The organization of this paper is as follows.
In \S\ref{sec:preliminaries}, we outline our notation and the mathematical preliminaries on graph-theoretic properties. The edge consensus setup is described in \S\ref{sec:problemsetup}, and our main results are given in \S\ref{sec:main-results}. A formation control consensus
example is given in \S\ref{sec:example}, and the paper is concluded in \S\ref{sec:conclusion}.

%%%%%%%%%%%%%%%%%%%%%%%%%%%%%%%%%%%%%%%%%%%%%%%%%%%%%%%%%%%%%%%%%%%%%%%%%%%%%%%%
\section{NOTATION AND PRELIMINARIES}
\label{sec:preliminaries}
In this section, we outline the notation used in the paper. The set of real numbers is denoted by $\mathbb{R}$. The sets of $n$-dimensional real vectors and  $m\times n$ real matrices are denoted by $\mathbb{R}^{n}$ and $\mathbb{R}^{m\times n}$, respectively. Diagonal matrices are written as $D=\mathbf{diag}(d)=\mathbf{diag}\{d_{1},\ldots,d_{n}\}$, where $d=(d_{1},\ldots,d_{n})\in\mathbb{R}^{n}$. $\mathbf{1}$ and $\mathbf{0}$ denote vectors that consist of all one and zero entries, respectively. The identity matrix of conformable dimensions is denoted by $I$. $|S|$ denotes the cardinality of a set $S$. We define the $\ell_{2}$-norm of a vector $x\in\mathbb{R}^{n}$ as $\|x\|_{2}=(x_{1}^{2}+\cdots+x_{n}^{2})^{\frac{1}{2}}$. $\mathbf{det}(A)$ denotes the determinant of matrix $A$. $C\succ 0$ and $C\succeq 0$ mean that the symmetric matrix $C$ is positive definite and positive semidefinite, respectively.

The null space of a matrix $A$ is denoted by $\mathcal{N}(A)$. $\mathbf{span}(A)$ denotes the subspace spanned by the columns of $A$. $M^T$, $M^{-1}$, and $M^{\dagger}$ denote the transpose, inverse, and Moore-Penrose pseudoinverse of a matrix $M$, respectively. For a nonsingular matrix $A$, $(A^{T})^{-1}$ is simplified to $A^{-T}$. Two matrices $C$ and $D$ are similar if there exists a nonsingular matrix $T$ such that $D=T^{-1}CT$. In this case, $C$ and $D$ have the same spectra. The maximum and minimum eigenvalues of a symmetric matrix $A$ are denoted by $\lambda_{\rm{max}}(A)$ and $\lambda_{\rm{min}}(A)$, respectively. The largest singular value of a (possibly nonsquare) complex matrix $B$ is defined as $\bar{\sigma}(B)=\sqrt{\lambda_{\rm{max}}(B^{*}B)}$, where $B^{*}$ denotes the conjugate transpose of $B$. For $A\succeq 0$, $\bar{\sigma}(A)=\lambda_{\rm{max}}(A)$, and so, these two terms will be used interchangeably in this case. 

This paper considers undirected, connected, and weighted graphs that are comprised of nodes with multiple time scales and no self-loops. 
The quadruple $\G=(\mathcal{V},\mathcal{E},W,E)$ denotes a graph $\G$, where $\mathcal{V}=\{1,\ldots,n\}$ is the set of nodes, $\mathcal{E}$ is the set of edges, $W$ is a diagonal matrix of (positive) edge weights, and $E$ is a diagonal matrix of (positive) node time scales. If we partition the set of nodes $\mathcal{V}$ into two disjoint sets $\mathcal{V}_1$ and $\mathcal{V}_2$, we write that $\mathcal{V} = \mathcal{V}_1 \sqcup \mathcal{V}_2$, i.e.,~ $\mathcal{V}$ is the disjoint union of $\mathcal{V}_1$ and $\mathcal{V}_2$.
We uniquely label each edge $ij$ as $l\in\{1,\ldots,|\mathcal{E}|\}$ and denote the associated weight as $w_{ij}=w_{ji}=w_{l}>0$. 
The weight matrix is thus defined as $W=\mathbf{diag}\{w_{1},\ldots,w_{|\mathcal{E}|}\}$. 
The time scale associated with a node $i$ is denoted by $\epsilon_{i}>0$, and so the matrix of time scales is defined as $E=\mathbf{diag}\{\epsilon_{1},\ldots,\epsilon_{|\mathcal{V}|}\}$. Let $N(i)$ denote the set of neighbor nodes of node $i$, i.e., the nodes $j$ such that $ij\in\mathcal{E}$.
The incidence matrix $D(\G)$ is a $|\mathcal{V}|\times|\mathcal{E}|$ matrix that characterizes the incidence relation between distinct pairs of nodes. 
For undirected graphs, the incidence matrix is constructed by giving the graph an arbitrary orientation, and therefore, the edges will have terminal and initial nodes. Then, $D(\G)$ can be defined as: $\begin{bmatrix}D(\G)\end{bmatrix}_{il}=1$ if $i$ is the initial node of edge $l$, $-1$ if $i$ is the terminal node of edge $l$, and $0$ otherwise.

A connected graph $\G$ can be split into two edge-disjoint subgraphs $\G_{\tau}$ and $\G_{c}$ on the same set of nodes, where $\G_{\tau}$ is a spanning tree (a connected graph on the $n$ nodes with $n-1$ edges) and $\G_{c}$ is the corresponding co-tree. 
$\G_{\tau}$ contains all the nodes of $\G$ connected with $|\mathcal{V}|-1$ edges, and $\G_{c}$ contains the remaining edges of $\G$. A connected graph has at least one spanning tree \cite{GTSC}. As a result, the columns of the incidence matrix can be permuted such that $D(\G)=\begin{bmatrix}D(\G_{\tau}) & D(\G_{c})\end{bmatrix}$ without loss of generality. Co-tree edges are linear combinations of the tree edges, i.e., $D(\G_{\tau})T_{\tau}^{c}=D(\G_{c})$, where the matrix $T_{\tau}^{c}=(D(\G_{\tau})^{T}D(\G_{\tau}))^{-1}D(\G_{\tau})^{T}D(\G_{c})$. Thus, the incidence matrix can be expressed as $D(\G)=D(\G_{\tau})R(\G)$, where $R(\G)=\begin{bmatrix}I & T_{\tau}^{c}\end{bmatrix}$, as in \cite{GTSC}. Other graph-associated matrices are the graph Laplacian $L(\G)=D(\G)D(\G)^{T}$ and the edge Laplacian $L_{e}(\G)=D(\G)^{T}D(\G)$.

%%%%%%%%%%%%%%%%%%%%%%%%%%%%%%%%%%%%%%%%%%%%%%%%%%%%%%%%%%%%%%%%%%%%%%%%%%%%%%%%
\section{EDGE CONSENSUS}
\label{sec:problemsetup}
In this section, we outline the edge variant of the consensus protocol over a network of time-scaled agents interconnected by weighted edges. We also introduce exogenous inputs in the form of measurement and process noise. The reader is referred to \cite{Foight2019a} for further details.

Consider a group of \emph{single integrator} units evolving at differing rates. We represent this configuration by a graph $\G=(\mathcal{V},\mathcal{E},W,E)$, where agents and the interconnections between them are represented by nodes and edges, respectively. Let $t\geq 0$ denote continuous time. The dynamics of agent $i\in\mathcal{V}$ are given by
\begin{equation*}
\label{eq:stateeqn}
\epsilon_{i}\dot{x}_{i}(t)=u_{i}(t)+w_{i}(t),
\end{equation*}
where $x_{i}(t)$, $\epsilon_{i}$, $u_{i}(t)$, and $w_{i}(t)$ represent the state, time scale, input, and zero-mean Gaussian process noise associated with agent $i$, respectively. The total input $u_i(t)$ to agent $i$ is the sum of the control input that achieves consensus and the corrupting measurement noise on the edges. For $i\in\mathcal{V}$, the input $u_{i}(t)$ is thus defined as
\begin{equation*}
\label{eq:controlinput}
u_{i}(t)=\sum_{j\in N(i)}(w_{ij}(x_{j}(t)-x_{i}(t))+v_{ij}(t)),
\end{equation*}
where $v_{ij}(t)$ is a zero-mean Gaussian measurement noise affecting edge $ij$.
In matrix form, the dynamics of all the agents and the stacked input are respectively expressed as
\begin{align}
\dot{x}(t)&=E(\G)^{-1}u(t)+E(\G)^{-1}w(t), \label{eq:stateeqnmatrix}\\
u(t)&=-D(\G)W(\G)D(\G)^{T}x(t)-D(\G)v(t), \label{eq:controlinputmatrix}
\end{align}
where $x(t)$, $u(t)$, $w(t)$, and $v(t)$ are the stacked vectors of states, inputs, and process and measurement noises, respectively. 
We denote the covariance matrices of $w(t)$ and $v(t)$ by $\Omega$ and $\Gamma$, respectively. Combining \eqref{eq:stateeqnmatrix} and \eqref{eq:controlinputmatrix}, we obtain the following time-scaled and weighted consensus problem:
\begin{equation}
\label{eq:consensus}
\dot{x}(t)={-}L_{w,s}(\G)x(t)+\begin{bmatrix} E(\G)^{-1} & -E(\G)^{-1}D(\G)\end{bmatrix}\left[\!\!\!\begin{array}{c}
w(t)\\
v(t)
\end{array}
\!\!\!\right]{,}
\end{equation}
where $L_{w,s}(\G)=E(\G)^{-1}D(\G)W(\G)D(\G)^{T}$ is the scaled and weighted Laplacian matrix.

For a connected graph, the graph Laplacian matrix $L(\G)$ has one zero eigenvalue and the rest are positive \cite{GTSC}, i.e., the nullity of $L(\G)$ is equal to $1$. It can be shown that $\mathcal{N}(L_{w,s}(\G))=\mathcal{N}(L(\G))=\mathcal{N}(D(\G_{\tau})^{T})$. Thus, the nullity of $L_{w,s}(\G)$ is also equal to $1$. Moreover, $L_{w,s}(\G)$ has nonnegative eigenvalues since it is similar to the positive semidefinite matrix $E(\G)^{-\frac{1}{2}}D(\G)W(\G)D(\G)^{T}E(\G)^{-\frac{1}{2}}$. Hence, $L_{w,s}(\G)$ has one zero eigenvalue, and the rest are positive. Thus, the state matrix $-L_{w,s}(\G)$ in $\eqref{eq:consensus}$ is not Hurwitz, which precludes analysis involving the $\Hinf$-norm. Therefore, following \cite{Zelazo2011a}, we use an edge variant of (\ref{eq:consensus}) to perform the $\Hinf$ analysis.

\begin{lem}\cite[Theorem 1]{Foight2019a}
\label{lemma:similarity}
Consider a connected graph $\G$ with a given spanning tree $\G_{\tau}$. The scaled and weighted graph Laplacian $L_{w,s}(\G)$ is similar to
\begin{equation*}
\left[\begin{array}{cc}
L_{e,s}(\G_{\tau})R(\G)W(\G)R(\G)^{T} & \mathbf{0}\\
\mathbf{0} & 0
\end{array}\right],
\end{equation*}
where $L_{e,s}(\G_{\tau})=D(\G_{\tau})^{T}E(\G)^{-1}D(\G_{\tau})$ is the time-scaled edge Laplacian for $\G_{\tau}$.
\end{lem}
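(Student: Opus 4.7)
The plan is to construct an explicit similarity transformation $T$ and verify that $T^{-1} L_{w,s}(\G) T$ equals the claimed block-diagonal matrix. Two structural observations drive the choice of $T$: (i) the null space of $L_{w,s}(\G)$ is $\mathbf{span}(\mathbf{1})$ since $D(\G)^{T}\mathbf{1}=\mathbf{0}$, so $\mathbf{1}$ naturally serves as the second block column; (ii) the text observes that $L_{w,s}(\G)$ is similar to the symmetric positive semidefinite matrix $E(\G)^{-\frac{1}{2}} D(\G) W(\G) D(\G)^{T} E(\G)^{-\frac{1}{2}}$, hence $L_{w,s}(\G)$ is diagonalizable, and its range $\mathbf{span}(E(\G)^{-1} D(\G_{\tau}))$ provides an invariant complement to $\mathbf{span}(\mathbf{1})$.

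Motivated by this decomposition, I would take
\[
T = \bigl[\,E(\G)^{-1} D(\G_{\tau})\, L_{e,s}(\G_{\tau})^{-1} \quad \mathbf{1}\,\bigr],
\]
where $L_{e,s}(\G_{\tau})^{-1}=\bigl(D(\G_{\tau})^{T} E(\G)^{-1} D(\G_{\tau})\bigr)^{-1}$ is well-defined because $D(\G_{\tau})$ has full column rank and $E(\G)\succ 0$. Invertibility of $T$ reduces to $\mathbf{1}\notin\mathbf{span}(E(\G)^{-1} D(\G_{\tau}))=E(\G)^{-1}\bigl(\mathbf{span}(\mathbf{1})^{\perp}\bigr)$, which holds because $\mathbf{1}^{T} E(\G)\mathbf{1}=\sum_{i}\epsilon_{i}>0$.

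The verification is then a short direct computation. The second block column is handled by $L_{w,s}(\G)\mathbf{1}=\mathbf{0}$, and for the first, using $D(\G)=D(\G_{\tau}) R(\G)$ together with the definitions of $L_{w,s}(\G)$ and $L_{e,s}(\G_{\tau})$,
\[
L_{w,s}(\G)\, E(\G)^{-1} D(\G_{\tau}) L_{e,s}(\G_{\tau})^{-1} = E(\G)^{-1} D(\G_{\tau}) R(\G) W(\G) R(\G)^{T},
\]
which coincides with the first block column of $T\cdot\begin{bmatrix} L_{e,s}(\G_{\tau}) R(\G) W(\G) R(\G)^{T} & \mathbf{0} \\ \mathbf{0} & 0 \end{bmatrix}$. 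Multiplying by $T^{-1}$ yields the block-diagonal form asserted in the lemma.

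The main subtlety is the asymmetry introduced by the time-scale matrix $E(\G)$. A naive choice such as $T=[D(\G_{\tau})\;\mathbf{1}]$ produces only a block triangular form because $\mathbf{1}^{T} E(\G)^{-1} D(\G_{\tau})$ generally fails to vanish. Replacing the basis $D(\G_{\tau})$ with the genuine invariant basis $E(\G)^{-1} D(\G_{\tau})$ and post-multiplying by $L_{e,s}(\G_{\tau})^{-1}$ are the algebraic adjustments that absorb this asymmetry and deliver exactly the block-diagonal structure specified.
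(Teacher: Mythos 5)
Your proof is correct and follows essentially the same route as the paper: your transformation $T=\bigl[\,E(\G)^{-1}D(\G_{\tau})L_{e,s}(\G_{\tau})^{-1}\;\;\mathbf{1}\,\bigr]$ is exactly the matrix $S_{v}(\G)$ that the paper (following \cite{Foight2019a}) uses, and your verification of $L_{w,s}T=T\,\mathrm{blkdiag}(L_{e,s}^{\tau}RWR^{T},0)$ together with the invertibility argument via $\mathbf{1}^{T}E\mathbf{1}>0$ is sound.
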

Lemma \ref{lemma:similarity} is proved in \cite{Foight2019a} by constructing the needed similarity transformation $S_{v}(\G)$ as follows:
\begin{equation*}
S_{v}(\G)=\begin{bmatrix}E(\G)^{-1}D(\G_{\tau})(D(\G_{\tau})^{T}E(\G)^{-1}D(\G_{\tau}))^{-1} & \mathbf{1}\end{bmatrix}.
\end{equation*}
The upper-left block of the resultant matrix, i.e., $L_{e,s}(\G_{\tau})R(\G)W(\G)R(\G)^{T}$, has positive eigenvalues. Moreover, it follows from \cite[Observation 7.1.8]{JohnsonHorn_MatrixAnalysis} that $L_{e,s}(\G_{\tau})=D(\G_{\tau})^{T}E(\G)^{-1}D(\G_{\tau})\succ 0$ and $R(\G)W(\G)R(\G)^{T}\succ 0$ since $W(\G)\succ 0$, $E(\G)^{-1}\succ 0$, and $R(\G)^T$ and $D(\G_{\tau})$ are full column rank. Hereafter, we omit the dependence of $R$, $W$, $E$, $D$, $L_{w,s}$, and $S_{v}$ on $\G$. We also define the simplified symbols $D_{\tau}=D(\G_{\tau})$ and $L_{e,s}^{\tau}=L_{e,s}(\G_{\tau})$.

Applying $x_{e}(t)=S_{v}^{-1}x(t)$ yields the following edge interpretation of the consensus dynamics:
\begin{multline}
\label{eq:edgeconsensus}
        \dot{x}_{e}(t)=\left[
\begin{array}{cc}
-L_{e,s}^{\tau}RWR^{T} & \mathbf{0}\\
\mathbf{0} & 0
\end{array}
\right]x_{e}(t)\\+\left[\begin{array}{cc}
D_{\tau}^{T}E^{-1} & -L_{e,s}^{\tau}R\\
\frac{1}{\epsilon_{s}}\mathbf{1}^{T} & \mathbf{0}
\end{array}\right]\left[\begin{array}{c}
w(t)\\
v(t)
\end{array}
\right],
\end{multline}
where $\epsilon_{s}=\sum_{i=1}^{n}\epsilon_{i}$. The vector of \emph{edge states} in \eqref{eq:edgeconsensus} can be partitioned as $x_{e}(t)=(x_{\tau}(t),x_{\mathbf{1}}(t))$, where $x_{\tau}(t)\in\mathbb{R}^{n-1}$ is the vector of edge states associated with the spanning tree $\mathcal{G}_\tau$ and $x_{\mathbf{1}}(t)\in\mathbb{R}$ is the edge state in the agreement/consensus space, $\mathbf{span(1)}$. Thus, for a given spanning tree $\G_\tau$, the \emph{edge consensus} model $\Sigma_{\tau}$ corresponding to the spanning tree edge states is given by
\begin{equation}
\label{eq:edgeagreement1}   
\begin{cases}
\dot{x}_{\tau}(t)&\!\!\!\!={-}L_{{e,s}}^{\tau}RWR^{T}x_{\tau}(t)+\!D_{\tau}^{T}E^{-1}\Omega\hat{w}(t)-\!L_{e,s}^{\tau}R\Gamma\hat{v}(t){,}\\
z(t)&\!\!\!\!=R^{T}x_{\tau}(t),
\end{cases}
\end{equation}
where $\hat{w}(t)$ and $\hat{v}(t)$ are normalized noise signals that satisfy $w(t)=\Omega \hat{w}(t)$ and $v(t)=\Gamma\hat{v}(t)$, respectively. In the edge consensus model $\Sigma_{\tau}$ in \eqref{eq:edgeagreement1}, the output $z(t)\in\mathbb{R}^{|\mathcal{E}|}$ is the monitored performance signal. Due to the inclusion of $R^T$ in the output equation, $z(t)$ consists of the spanning tree edge states, as well as the co-tree edge states. A variation of the model in (\ref{eq:edgeagreement1}) is also considered in \cite{Zelazo2011a,Foight2019a}, in which the output is chosen as the vector of the spanning tree edge states, i.e., $z(t)=x_{\tau}(t) \in \mathbb{R}^{n-1}$.

%%%%%%%%%%%%%%%%%%%%%%%%%%%%%%%%%%%%%%%%%%%%%%%%%%%%%%%%%%%%%%%%%%%%%%%%%%%%%%%%
\section{MAIN RESULTS}
\label{sec:main-results}
In this section, we characterize the $\Hinf$-norm of the edge agreement protocol developed in \S\ref{sec:problemsetup} and develop a design problem for selecting edge weights and time scales.
\subsection[H-infinity Performance]{$\Hinf$ Performance}
\label{sec:Hinf}
Consider a stable linear time-invariant system with a transfer function matrix $\Phi(s)$, where $s$ denotes the Laplace variable. The $\Hinf$-norm of this system is defined as follows \cite[Chapter 4.3]{robustcontrol}:
\begin{equation*}
\|\Phi\|_{\infty}=\sup_{\omega\in\mathbb{R}}\{\bar{\sigma}\big(\Phi(j\omega)\big)\}.    
\end{equation*}
The $\Hinf$-norm is equal to the $\mathcal{L}_{2}$-induced norm of the system. In the context of networked systems, the $\Hinf$-norm is a measure of the asymptotic deviation of the states of the agents from consensus as a result of finite-energy exogenous disturbances \cite{Zelazo2011a}.

The transfer function matrix of system $\Sigma_{\tau}$ defined in \eqref{eq:edgeagreement1} is given by
\begin{equation}
\label{eq:tfsigma}
\Sigma_{\tau}(s)=R^{T}(sI+L_{e,s}^{\tau}RWR^{T})^{-1}\begin{bmatrix}D_{\tau}^{T}E^{-1}\Omega & -L_{e,s}^{\tau}R\Gamma\end{bmatrix}.
\end{equation}

\begin{prop}
\label{prop:statematrixdiag}
Consider the system $\Sigma_{\tau}$ defined in \eqref{eq:edgeagreement1}. The state matrix $-L_{e,s}^{\tau}RWR^{T}$ is diagonalizable.
\end{prop}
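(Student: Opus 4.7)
The plan is to exploit the classical fact that the product of two symmetric positive definite matrices, while not symmetric in general, is similar to a symmetric positive definite matrix and hence diagonalizable with positive real eigenvalues. The paper has already established the two ingredients we need: $L_{e,s}^{\tau} = D_{\tau}^{T} E^{-1} D_{\tau} \succ 0$ and $RWR^{T} \succ 0$ (both shown in the discussion following Lemma~\ref{lemma:similarity}).

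First I would take the unique symmetric positive definite square root $(L_{e,s}^{\tau})^{1/2}$, which exists by the spectral theorem since $L_{e,s}^{\tau} \succ 0$. Then I would perform the similarity transformation
\begin{equation*}
(L_{e,s}^{\tau})^{-1/2}\bigl(L_{e,s}^{\tau} R W R^{T}\bigr)(L_{e,s}^{\tau})^{1/2} = (L_{e,s}^{\tau})^{1/2} R W R^{T} (L_{e,s}^{\tau})^{1/2}.
\end{equation*}
The right-hand side is symmetric and, being a congruence of $RWR^{T} \succ 0$ by the nonsingular matrix $(L_{e,s}^{\tau})^{1/2}$, is itself positive definite. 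By the spectral theorem it is therefore orthogonally diagonalizable with strictly positive real eigenvalues.

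Next, since similar matrices share both their spectrum and the property of being diagonalizable, $L_{e,s}^{\tau} R W R^{T}$ is diagonalizable with strictly positive real eigenvalues. Finally, multiplication by $-1$ preserves diagonalizability (it merely negates the eigenvalues), so $-L_{e,s}^{\tau} R W R^{T}$ is diagonalizable, with strictly negative real eigenvalues. The last observation also confirms Hurwitz stability of the edge consensus state matrix, which is a prerequisite for the $\Hinf$ analysis to follow.

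There is no real obstacle in this argument; the proof is essentially a one-line invocation of the square-root trick. The only thing worth being careful about is to justify invertibility of $(L_{e,s}^{\tau})^{1/2}$ (immediate from $L_{e,s}^{\tau} \succ 0$) and the positive definiteness of the congruence $(L_{e,s}^{\tau})^{1/2} R W R^{T} (L_{e,s}^{\tau})^{1/2}$, both of which follow directly from Sylvester's law of inertia or, equivalently, from the fact that congruence by a nonsingular matrix preserves the signature.
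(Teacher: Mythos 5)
Your proof is correct, but it takes a genuinely different route from the paper's. The paper argues at the node level: it shows $L_{w,s}$ is diagonalizable (via the similarity $E^{\frac{1}{2}}L_{w,s}E^{-\frac{1}{2}}=E^{-\frac{1}{2}}DWD^{T}E^{-\frac{1}{2}}$, a real symmetric matrix), invokes Lemma~\ref{lemma:similarity} to transfer diagonalizability to the block-diagonal matrix $\mathbf{diag}\{L_{e,s}^{\tau}RWR^{T},0\}$, and then uses the fact that a block-diagonal matrix is diagonalizable if and only if its blocks are. You instead work directly on the edge-level matrix, writing $L_{e,s}^{\tau}RWR^{T}$ as a product of two positive definite matrices and conjugating by $(L_{e,s}^{\tau})^{1/2}$ to land on the symmetric positive definite matrix $(L_{e,s}^{\tau})^{1/2}RWR^{T}(L_{e,s}^{\tau})^{1/2}$. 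Your argument is more self-contained (it does not need Lemma~\ref{lemma:similarity} or the block-diagonal lemma) and delivers strict positivity of the eigenvalues of $L_{e,s}^{\tau}RWR^{T}$ as a free byproduct, i.e., the Hurwitz property of the state matrix, which the paper asserts separately after Lemma~\ref{lemma:similarity}. What the paper's route buys in exchange is the identification of the spectrum of $L_{e,s}^{\tau}RWR^{T}$ with the nonzero eigenvalues of $L_{w,s}$, a fact the authors use immediately afterwards when they note that $\Lambda$ consists of exactly those eigenvalues; your proof does not yield that identification and one would still need the similarity argument to obtain it.
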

\begin{proof}
We begin by proving that $L_{w,s}$ is diagonalizable by showing that it is similar to a diagonalizable matrix. Namely, $E^{\frac{1}{2}}L_{w,s}E^{-\frac{1}{2}}=E^{-\frac{1}{2}}DWD^{T}E^{-\frac{1}{2}}$, and the latter matrix is diagonalizable since it is a real symmetric matrix \cite[Theorem 4.1.5]{JohnsonHorn_MatrixAnalysis}. 
From Lemma \ref{lemma:similarity}, the matrix \begin{equation*}
    \left[\begin{array}{cc}
L_{e,s}^{\tau}RWR^{T} & \mathbf{0}\\
\mathbf{0} & 0
\end{array}\right]
\end{equation*} is similar to $L_{w,s}$, and so it is diagonalizable. A block-diagonal matrix is diagonalizable if and only if its diagonal blocks are diagonalizable \cite[Lemma 1.3.10]{JohnsonHorn_MatrixAnalysis}. So, we can conclude that $-L_{e,s}^{\tau}RWR^{T}$ is diagonalizable.
\end{proof}
From Proposition \ref{prop:statematrixdiag}, it follows that there exists a transformation matrix $T$ such that $\Lambda=T^{-1}L_{e,s}^{\tau}RWR^{T}T$ is diagonal. Applying $\hat{x}_{\tau}(t)=T^{-1}x_{\tau}(t)$, we get
\begin{equation}
\label{eq:edgeagreementdiag}
\begin{cases}
\dot{\hat{x}}_{\tau}(t)&\!\!\!\!=-\Lambda\hat{x}_{\tau}(t)+T^{-1}\begin{bmatrix}D_{\tau}^{T}E^{-1}\Omega & -L_{e,s}^{\tau}R\Gamma\end{bmatrix}\left[\!\!\begin{array}{c}
\hat{w}(t)\\
\hat{v}(t)
\end{array}
\!\!\right]{,}\\
z(t)&\!\!\!\!=R^{T}T\hat{x}_{\tau}(t).
\end{cases}
\end{equation}
From the proof of Proposition \ref{prop:statematrixdiag}, it can be further seen that $\Lambda$ is a diagonal matrix that consists of the nonzero eigenvalues of $L_{w,s}$. 

Consider a variation of the system in \eqref{eq:edgeagreementdiag} in which the output is chosen as the state vector $\hat{x}_{\tau}(t)$. This system, denoted by $H$, has the following transfer function representation:
\begin{equation}
H(s)=(sI+\Lambda)^{-1}T^{-1}\begin{bmatrix}D_{\tau}^{T}E^{-1}\Omega & -L_{e,s}^{\tau}R\Gamma\end{bmatrix}.
\label{eq:tfmodified}
\end{equation}
From \eqref{eq:edgeagreementdiag} and \eqref{eq:tfmodified}, it follows that $\Sigma_{\tau}(s)=R^{T}TH(s)$.
\begin{lem}
\label{lemma:zelazo1}
The $\Hinf$-norm of the system $H$ defined in \eqref{eq:tfmodified} satisfies $\|H\|_{\infty}=\bar{\sigma}\big(H(0)\big)$.
\end{lem}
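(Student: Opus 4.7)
The plan is to exploit the diagonal structure of $\Lambda$ to reduce the singular value computation on $H(j\omega)$ to a simple matrix inequality that is tight at $\omega=0$.

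First, I would write the transfer function as $H(s)=(sI+\Lambda)^{-1}M$, where
$$M=T^{-1}\begin{bmatrix}D_{\tau}^{T}E^{-1}\Omega & -L_{e,s}^{\tau}R\Gamma\end{bmatrix}$$
is a constant matrix. Since the $\mathcal{H}_\infty$-norm is $\|H\|_\infty=\sup_{\omega\in\mathbb{R}}\bar\sigma(H(j\omega))$ and $\bar\sigma(H(j\omega))^2=\lambda_{\max}(H(j\omega)^*H(j\omega))$, I would focus on controlling $H(j\omega)^*H(j\omega)$ uniformly in $\omega$.

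Next, using that $\Lambda$ is a real diagonal matrix with strictly positive entries (the nonzero eigenvalues of $L_{w,s}$, as noted after Proposition \ref{prop:statematrixdiag}), the two matrices $(j\omega I+\Lambda)^{-1}$ and $(-j\omega I+\Lambda)^{-1}$ commute, so
$$H(j\omega)^*H(j\omega)=M^*(-j\omega I+\Lambda)^{-1}(j\omega I+\Lambda)^{-1}M=M^{*}(\Lambda^{2}+\omega^{2}I)^{-1}M.$$
Since $\Lambda\succ 0$, the matrix $\Lambda^2+\omega^2 I$ is positive definite for every $\omega$, and the monotonicity of matrix inversion on the positive-definite cone gives $(\Lambda^{2}+\omega^{2}I)^{-1}\preceq \Lambda^{-2}$ with equality at $\omega=0$. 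Congruence by $M$ preserves the Loewner order, so
$$H(j\omega)^*H(j\omega)\preceq M^{*}\Lambda^{-2}M=H(0)^{*}H(0),$$
where I used $H(0)=\Lambda^{-1}M$ in the last equality.

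Finally, taking $\lambda_{\max}$ on both sides (a monotone functional on Hermitian matrices in the Loewner order) yields $\bar\sigma(H(j\omega))\leq \bar\sigma(H(0))$ for all $\omega\in\mathbb{R}$, with equality at $\omega=0$. Taking the supremum over $\omega$ therefore gives $\|H\|_\infty=\bar\sigma(H(0))$. There is no real obstacle here; the only subtlety is being careful about conjugate transposes so that the inner factor collapses to the real matrix $(\Lambda^2+\omega^2 I)^{-1}$, which is precisely what makes the Loewner comparison to $\Lambda^{-2}$ work uniformly in $\omega$.
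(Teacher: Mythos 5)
Your proof is correct and follows essentially the same route as the argument the paper relies on (it omits the proof and cites \cite[Proposition 3.5]{Zelazo2011a}, whose proof likewise reduces $H(j\omega)^{*}H(j\omega)$ to $M^{*}(\Lambda^{2}+\omega^{2}I)^{-1}M$ and uses monotonicity in $\omega$ to place the supremum at $\omega=0$). The only point worth making explicit is that $\Lambda$ is real, diagonal, and positive definite --- which the paper establishes since $\Lambda$ carries the nonzero eigenvalues of $L_{w,s}$ --- so that $(j\omega I+\Lambda)^{*}=-j\omega I+\Lambda$ and the inner factor indeed collapses as you claim.
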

\begin{lem}
\label{lemma:zelazo2}
The $\Hinf$-norm of the system $\Sigma_\tau$ defined in \eqref{eq:edgeagreement1}, and having the similar realization defined in \eqref{eq:edgeagreementdiag}, satisfies $\|\Sigma_{\tau}\|_{\infty}=\bar{\sigma}\big(R^{T}TH(0)\big)$.
\end{lem}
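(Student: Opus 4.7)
The plan is to reduce the claim to showing that the $\Hinf$-norm of $\Sigma_\tau$ is attained at $\omega = 0$. Combining \eqref{eq:tfmodified} with the diagonalization $T^{-1}L_{e,s}^{\tau}RWR^{T}T = \Lambda$ from Proposition \ref{prop:statematrixdiag} already gives $\Sigma_\tau(s) = R^{T}TH(s)$, so $\Sigma_\tau(0) = R^{T}TH(0)$. Thus the claim is equivalent to $\sup_{\omega\in\mathbb{R}}\bar{\sigma}(\Sigma_\tau(j\omega)) = \bar{\sigma}(\Sigma_\tau(0))$, and the lower bound $\|\Sigma_\tau\|_{\infty}\geq\bar{\sigma}(\Sigma_\tau(0))$ is immediate from the definition of the norm by evaluating the supremum at $\omega = 0$.

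For the matching upper bound, I would compute, in the diagonalized coordinates, with $\tilde B = T^{-1}B_\Sigma$ and $K = T^{*}RR^{T}T\succeq 0$,
\[
\Sigma_\tau(j\omega)^{*}\Sigma_\tau(j\omega) = \tilde B^{*}(-j\omega I + \Lambda)^{-1}K(j\omega I + \Lambda)^{-1}\tilde B.
\]
The task then reduces to showing that the maximum eigenvalue of this inner Hermitian matrix is maximized at $\omega=0$. This mimics Lemma \ref{lemma:zelazo1}: when $K = I$, the inner factor collapses to the diagonal $(\omega^{2}I+\Lambda^{2})^{-1}$, which is Löwner-monotone decreasing in $\omega^{2}$ and hence places the spectral maximum at $\omega=0$, yielding $\|H\|_{\infty} = \bar{\sigma}(H(0))$.

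The main obstacle is extending this monotonicity to a general PSD sandwich $K$: the off-diagonal entries $K_{ik}/[(-j\omega+\lambda_{i})(j\omega+\lambda_{k})]$ of $(-j\omega I + \Lambda)^{-1} K (j\omega I + \Lambda)^{-1}$ carry complex phase factors in $\omega$, so the full matrix is no longer Löwner-monotone in $\omega^{2}$ and the argument of Lemma \ref{lemma:zelazo1} does not transfer verbatim. To bridge this I would appeal to the structural similarity established in the proof of Proposition \ref{prop:statematrixdiag}, namely that $L_{e,s}^{\tau}RWR^{T}$ is similar (via $(L_{e,s}^{\tau})^{-1/2}$) to the symmetric positive definite matrix $(L_{e,s}^{\tau})^{1/2}RWR^{T}(L_{e,s}^{\tau})^{1/2}$. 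This constrains the eigenvector matrix $T$ and hence the admissible $K$; combining this constraint with a Courant--Fischer argument focused on the largest eigenvalue alone (rather than on the full matrix) should reduce the claim to the scalar monotonicity $\omega\mapsto 1/(\omega^{2}+\lambda_{i}^{2})$ along each eigenmode, yielding $\bar{\sigma}(\Sigma_\tau(j\omega))^{2}\le\bar{\sigma}(\Sigma_\tau(0))^{2}$ and completing the proof.
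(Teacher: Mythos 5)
Your setup is sound and your diagnosis of the difficulty is exactly right: writing $\Sigma_\tau(j\omega)^{*}\Sigma_\tau(j\omega)=\tilde{B}^{*}(\Lambda-j\omega I)^{-1}K(\Lambda+j\omega I)^{-1}\tilde{B}$ with $K=T^{*}RR^{T}T$, the lower bound is immediate, and the $K=I$ case (Lemma~\ref{lemma:zelazo1}) does follow from the L\"owner monotonicity of $(\omega^{2}I+\Lambda^{2})^{-1}$. But the proposal stops precisely where the content of the lemma begins. Your final paragraph is a plan, not an argument: you assert that the similarity of $L_{e,s}^{\tau}RWR^{T}$ to a symmetric positive definite matrix ``constrains'' $K$ and that a Courant--Fischer argument ``should'' reduce everything to scalar monotonicity along eigenmodes, but you never exhibit the constraint or carry out the reduction. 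This gap is not routine, because the statement your machinery would need — that $\lambda_{\max}\bigl(\tilde{B}^{*}(\Lambda-j\omega I)^{-1}K(\Lambda+j\omega I)^{-1}\tilde{B}\bigr)$ peaks at $\omega=0$ for arbitrary positive semidefinite $K$ and arbitrary $\tilde{B}$ — is false. Take $\Lambda=\mathbf{diag}\{1,2\}$, $K=qq^{T}$ with $q=(1,1)$, and a single input column $\tilde{B}=(1,-2)$: the quantity above equals $\omega^{2}/\bigl((1+\omega^{2})(4+\omega^{2})\bigr)$, which vanishes at $\omega=0$ and is positive elsewhere. Hence any correct proof must invoke a specific algebraic relation tying $\Lambda$, $K=T^{T}RR^{T}T$, and $\tilde{B}=T^{-1}B$ together, and you have not identified which relation does the work. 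Note also that Courant--Fischer ``focused on the largest eigenvalue alone'' does not sidestep the problem: to bound $\lambda_{\max}$ at frequency $\omega$ you must control the quadratic form at the $\omega$-dependent maximizing direction, which is exactly the L\"owner comparison you correctly observed fails.

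For comparison, the paper gives no proof either, deferring to Proposition~3.6 of \cite{Zelazo2011a}. In that unweighted, unit-time-scale setting the argument closes because conjugating by $(L_{e}^{\tau})^{1/2}$ turns the state matrix and the output Gramian into the \emph{same} symmetric matrix, so that in the eigenbasis $K$ coincides with $\Lambda$, the middle factor becomes the diagonal matrix $\Lambda(\omega^{2}I+\Lambda^{2})^{-1}$, and your Lemma~\ref{lemma:zelazo1} argument applies verbatim. In the present setting the symmetrized state matrix is $(L_{e,s}^{\tau})^{1/2}RWR^{T}(L_{e,s}^{\tau})^{1/2}$ while the symmetrized output Gramian is $(L_{e,s}^{\tau})^{1/2}RR^{T}(L_{e,s}^{\tau})^{1/2}$, and these do not commute for general $W$ — this mismatch is exactly why the paper later introduces the $W^{1/2}$-weighted output in $\Pi_{\tau}$. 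Supplying the argument that bridges this mismatch is the missing step in your proof.
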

The proofs of Lemmas \ref{lemma:zelazo1} and \ref{lemma:zelazo2} are similar to those of Propositions $3.5$ and $3.6$ in \cite{Zelazo2011a}, respectively, and are omitted for brevity. From Lemma \ref{lemma:zelazo1}, it follows that the supremum for the modified system $H$ occurs at $\omega=0$. From Lemma \ref{lemma:zelazo2}, the supremum for the system $\Sigma_{\tau}$ also occurs at $\omega=0$. 

The following lemma allows for deriving a property to be used in subsequent proofs. To state this lemma, we define the majorisation $\prec_{m}$ as follows. Let $a$ and $b$ be two nonnegative vectors in $\mathbb{R}^c$. Then, $\log a\prec_{m} \log b$ means that $\prod_{i=1}^{k}a_{[i]}\leq\prod_{i=1}^{k}b_{[i]}$ for all $1\leq k\leq c$ and $\prod_{i=1}^{c}a_{[i]}=\prod_{i=1}^{c}b_{[i]}$, where $(a_{[1]},\ldots,a_{[c]})$ is the nonincreasing rearrangement of $a \in \mathbb{R}^{c}$ \cite[Chapter 3]{bhatia_matrixanalysis}.

\begin{lem}\cite[Corollary \RNum{3}.4.6]{bhatia_matrixanalysis}
\label{lemma:eigproperty}
Let $M$ and $N$ be two $c\times c$ positive semidefinite matrices. Then, the vector of eigenvalues $\lambda(MN)=(\lambda_{\rm{max}}(MN),\ldots,\lambda_{\rm{min}}(MN))$ is nonnegative and satisfies
\begin{align*}
&\log\lambda(MN)\prec_{m}\log\lambda^{\downarrow}(M)+\log\lambda^{\downarrow}(N),\\
&\log\lambda(MN)\succ_{m}\log\lambda^{\downarrow}(M)+\log\lambda^{\uparrow}(N),
\end{align*}
where $\lambda^{\downarrow}(M)$ and $\lambda^{\uparrow}(M)$ are the vectors of the eigenvalues of $M$ arranged in a nonicreasing and nondecreasing order, respectively.
\end{lem}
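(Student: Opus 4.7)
The statement is Corollary III.4.6 from Bhatia's textbook, so the paper will almost certainly just cite it and omit the proof. If I had to reconstruct the argument from scratch, my plan would be to reduce everything to classical inequalities on singular values of products, via the standard similarity trick that converts $MN$ into a Hermitian problem.

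First I would establish nonnegativity of $\lambda(MN)$. Although $MN$ is generally not symmetric, when $M,N\succeq 0$ the product is similar to $M^{1/2}NM^{1/2}$ (up to kernel issues that can be handled by a small perturbation $M_\delta = M+\delta I$ and a continuity argument as $\delta\downarrow 0$). Since $M^{1/2}NM^{1/2}\succeq 0$, its eigenvalues are nonnegative and coincide with its singular values. This also gives the identity $\lambda_i(MN)=\sigma_i(M^{1/2}NM^{1/2})$ for each $i$, which is the bridge between ``eigenvalues of $MN$'' and ``singular values of something.''

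Second, for the upper log-majorization I would invoke Horn's multiplicative inequality for singular values: for any compatible matrices $A,B$ and $1\le k\le c$,
\begin{equation*}
\prod_{i=1}^{k}\sigma_{i}(AB)\ \le\ \prod_{i=1}^{k}\sigma_{i}(A)\,\sigma_{i}(B),
\end{equation*}
with equality when $k=c$ (determinant identity). Applied with $A=M^{1/2}$ and $B=NM^{1/2}$, and then again to $NM^{1/2}$ itself, one telescopes to $\prod_{i=1}^{k}\sigma_i(M)\sigma_i(N)=\prod_{i=1}^{k}\lambda_{[i]}^{\downarrow}(M)\lambda_{[i]}^{\downarrow}(N)$, using PSD-ness to identify singular values with eigenvalues. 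Taking logarithms yields precisely $\log\lambda(MN)\prec_{m}\log\lambda^{\downarrow}(M)+\log\lambda^{\downarrow}(N)$, with the required equality at $k=c$ coming from $\mathbf{det}(MN)=\mathbf{det}(M)\mathbf{det}(N)$.

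For the reverse log-majorization with $\lambda^{\uparrow}(N)$, I would use the companion (lower) Horn inequality, which follows from the upper one applied to inverses (handling the singular case again by perturbation): for invertible $A,B$,
\begin{equation*}
\prod_{i=1}^{k}\sigma_{i}(AB)\ \ge\ \prod_{i=1}^{k}\sigma_{i}(A)\,\sigma_{c-i+1}(B),
\end{equation*}
which after the same reduction gives the lower bound with the nondecreasing rearrangement of $\lambda(N)$. The main obstacle is really the multiplicative Horn inequality itself, whose standard proof proceeds via the compound/antisymmetric tensor construction: one shows $\sigma_i(\wedge^k A)=\prod_{j=1}^{k}\sigma_j(A)$ and applies submultiplicativity of the operator norm on $\wedge^k$. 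Everything else is bookkeeping (the PSD$\to$singular-value identification and the determinant equality at $k=c$). Once those two Horn inequalities are in hand, both majorization statements drop out directly.
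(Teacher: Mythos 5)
You predicted correctly: the paper offers no proof of this lemma and simply cites it as Corollary III.4.6 of Bhatia's \emph{Matrix Analysis} (indeed, only the $k=1$ consequence in \eqref{eq:bounds} is ever used). Your reconstruction — reducing $\lambda(MN)$ to the singular values of $M^{1/2}NM^{1/2}$, invoking the multiplicative Horn/Gelfand--Naimark inequalities proved via antisymmetric tensor powers, and handling singular $M$ by perturbation — is essentially the textbook's own argument and is correct.
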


From the properties of the majorisation, and for $k=1$,  it follows that
\begin{equation}
\label{eq:bounds}
\lambda_{\rm{max}}(M)\lambda_{\rm{min}}(N)\leq\lambda_{\rm{max}}(MN)\leq\lambda_{\rm{max}}(M)\lambda_{\rm{max}}(N).
\end{equation}

The following lemma shows how to bound the $\Hinf$-norm of the system $\Sigma_{\tau}$ for general covariance matrices $\Omega$ and $\Gamma$. Namely, the provided upper and lower bound expressions isolate the terms that depend on the covariance matrices from the ones that depend on other system-theoretic quantities.

\begin{lem}
\label{lemma:covbounds}
Consider the system $\Sigma_{\tau}$ defined in \eqref{eq:edgeagreement1}. Let $A$, $B$, and $C$ denote the state, input, and output matrices of $\Sigma_{\tau}$, respectively, i.e., $A=-L_{e,s}^{\tau}RWR^{T}$, $B=\begin{bmatrix}D_{\tau}^{T}E^{-1}\Omega & -L_{e,s}^{\tau}R\Gamma\end{bmatrix}$, and $C=R^{T}$. The $\Hinf$-norm of this system satisfies
\begin{align*}
\|\Sigma_{\tau}\|_{\infty}^{2}&\geq\big(\lambda_{\rm{min}}(Q)\lambda_{\rm{min}}(B_{\tau}^{T}B_{\tau})\\&+\lambda_{\rm{min}}(F)\lambda_{\rm{min}}(B_{c}^{T}B_{c})\big)\lambda_{\rm{max}}(J),\\
\|\Sigma_{\tau}\|_{\infty}^{2}&\leq\big(\lambda_{\rm{max}}(Q)\lambda_{\rm{max}}(B_{\tau}^{T}B_{\tau})\\&+\lambda_{\rm{max}}(F)\lambda_{\rm{max}}(B_{c}^{T}B_{c})\big)\lambda_{\rm{max}}(J),
\end{align*}
where $B_{\tau}=E^{-1}D_{\tau}$, $B_{c}=R^{T}L_{e,s}^{\tau}$, $Q=\Omega\Omega^{T}$, $F=\Gamma\Gamma^{T}$, and $J=A^{-T}C^{T}CA^{-1}$.
\end{lem}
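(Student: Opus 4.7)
The plan is to reduce the $\Hinf$-norm computation to a bound on the maximum eigenvalue of a product of two positive semidefinite matrices, and then chain the bounds \eqref{eq:bounds} with Weyl's inequality.

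First, I would invoke Lemma \ref{lemma:zelazo2}: it asserts that the supremum defining $\|\Sigma_{\tau}\|_\infty$ is attained at $\omega = 0$, so $\|\Sigma_{\tau}\|_\infty = \bar{\sigma}(\Sigma_{\tau}(0)) = \bar{\sigma}(-CA^{-1}B)$ for the $A$, $B$, $C$ of the statement. Squaring and using that $XY$ and $YX$ share the same nonzero eigenvalues,
\[
\|\Sigma_{\tau}\|_\infty^{2} = \lambda_{\rm{max}}\bigl(B^T A^{-T} C^T C A^{-1} B\bigr) = \lambda_{\rm{max}}(J B B^T).
\]
Both $J$ and $BB^T$ are positive semidefinite (the former as $(CA^{-1})^T(CA^{-1})$, and $A$ is invertible since $L_{e,s}^{\tau}RWR^T \succ 0$ by the remarks after Lemma \ref{lemma:similarity}), so \eqref{eq:bounds} applied to the product $J \cdot BB^T$ gives
\[
\lambda_{\rm{max}}(J)\lambda_{\rm{min}}(BB^T) \leq \|\Sigma_{\tau}\|_\infty^{2} \leq \lambda_{\rm{max}}(J)\lambda_{\rm{max}}(BB^T).
\]

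Second, I would expand $BB^T$ explicitly. Using the symmetry of $E^{-1}$ and $L_{e,s}^{\tau}$ together with the definitions $Q = \Omega\Omega^T$, $F = \Gamma\Gamma^T$, $B_{\tau} = E^{-1}D_{\tau}$, and $B_{c} = R^T L_{e,s}^{\tau}$,
\[
BB^T = B_{\tau}^T Q B_{\tau} + B_{c}^T F B_{c},
\]
which displays $BB^T$ as a sum of two positive semidefinite matrices. Weyl's inequality for sums of Hermitian matrices then yields $\lambda_{\rm{max}}(BB^T) \leq \lambda_{\rm{max}}(B_{\tau}^T Q B_{\tau}) + \lambda_{\rm{max}}(B_{c}^T F B_{c})$ and $\lambda_{\rm{min}}(BB^T) \geq \lambda_{\rm{min}}(B_{\tau}^T Q B_{\tau}) + \lambda_{\rm{min}}(B_{c}^T F B_{c})$.

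Third, I would sandwich $\lambda_{\rm{min}}(Q) I \preceq Q \preceq \lambda_{\rm{max}}(Q) I$ between $B_{\tau}^T$ and $B_{\tau}$ (and do the analogous thing for $F$ between $B_{c}^T$ and $B_{c}$) to obtain the scalar bounds $\lambda_{\rm{min}}(Q)\lambda_{\rm{min}}(B_{\tau}^T B_{\tau}) \leq \lambda_{\rm{min}}(B_{\tau}^T Q B_{\tau})$ and $\lambda_{\rm{max}}(B_{\tau}^T Q B_{\tau}) \leq \lambda_{\rm{max}}(Q)\lambda_{\rm{max}}(B_{\tau}^T B_{\tau})$, and likewise for the $F$--$B_{c}$ pair. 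Substituting these back into the chain from the first step delivers both claimed inequalities. The main bookkeeping hazard is to invoke the \emph{lower} half of \eqref{eq:bounds} in the form $\lambda_{\rm{max}}(J)\lambda_{\rm{min}}(BB^T) \leq \lambda_{\rm{max}}(JBB^T)$ so that $\lambda_{\rm{max}}(J)$ (rather than $\lambda_{\rm{min}}(J)$) appears on the lower-bound side, and to note that $B_{\tau}^T B_{\tau}$ and $B_{c}^T B_{c}$ are actually positive definite — since $D_{\tau}$ and $R^T$ are full column rank and $E^{-1}$, $L_{e,s}^{\tau}$ are invertible — so the lower bound is nontrivial rather than vacuous.
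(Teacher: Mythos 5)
Your proposal is correct and follows essentially the same route as the paper's proof: reduce to $\lambda_{\rm{max}}(JBB^{T})$ via Lemma \ref{lemma:zelazo2} and the cyclic-permutation property of eigenvalues, apply \eqref{eq:bounds} to the product $J\cdot BB^{T}$, split $BB^{T}=B_{\tau}^{T}QB_{\tau}+B_{c}^{T}FB_{c}$ with Weyl's inequality, and then bound each congruence term. The only cosmetic difference is that you obtain $\lambda_{\rm{min}}(Q)\lambda_{\rm{min}}(B_{\tau}^{T}B_{\tau})\leq\lambda_{\rm{min}}(B_{\tau}^{T}QB_{\tau})$ and its three companions directly from the sandwich $\lambda_{\rm{min}}(Q)I\preceq Q\preceq\lambda_{\rm{max}}(Q)I$, whereas the paper cites an external lemma for the same fact.
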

\begin{proof}
By Lemma \ref{lemma:zelazo2}, $\|\Sigma_{\tau}\|_{\infty}=\bar{\sigma}\big(\Sigma_{\tau}(s)|_{s=0}\big)$. Then, 
\begin{align*}
\|\Sigma_{\tau}\|_{\infty}^{2}&=\lambda_{\rm{max}}\big(\Sigma_{\tau}(s)\Sigma_{\tau}(s)^{*}|_{s=0}\big)\\
&=\lambda_{\rm{max}}\big(C(-A)^{-1}BB^{T}(-A^{T})^{-1}C^{T}\big)\\
&=\lambda_{\rm{max}}\big((-A^{T})^{-1}C^{T}C(-A)^{-1}BB^{T}\big)\\
&=\lambda_{\rm{max}}\big(A^{-T}C^{T}CA^{-1}BB^{T}\big)=\lambda_{\rm{max}}\big(JBB^{T}\big), 
\end{align*}
where the equality on the third line above follows from \cite[Proposition 3.17]{geir}. We use the obtained expression to find bounds on $\|\Sigma_{\tau}\|_{\infty}^{2}$. Using \eqref{eq:bounds}, it follows that
\begin{equation}
\label{pf:step1}
\lambda_{\rm{max}}(J)\lambda_{\rm{min}}(BB^{T})\leq\lambda_{\rm{max}}(JBB^{T})\leq
\lambda_{\rm{max}}(J)\lambda_{\rm{max}}(BB^{T}).
\end{equation}
$BB^{T}$ can be expressed as $BB^{T}=B_{\tau}^{T}QB_{\tau}+B_{c}^{T}FB_{c}$.
Using Weyl's Theorem \cite[Theorem 4.3.1]{JohnsonHorn_MatrixAnalysis}, it follows that
\begin{align}
\lambda_{\rm{min}}(BB^{T})&\geq\lambda_{\rm{min}}(B_{\tau}^{T}QB_{\tau})+\lambda_{\rm{min}}(B_{c}^{T}FB_{c}),\label{pf:weyl1}\\
\lambda_{\rm{max}}(BB^{T})&\leq\lambda_{\rm{max}}(B_{\tau}^{T}QB_{\tau})+\lambda_{\rm{max}}(B_{c}^{T}FB_{c}).\label{pf:weyl2}
\end{align}
Given the properties of the matrices $E^{-1}$, $D_{\tau}$, $L_{e,s}^{\tau}$, and $R^{T}$, it is not difficult to see that $\mathcal{N}(B_{\tau})=\mathbf{0}$ and $\mathcal{N}(B_{c})=\mathbf{0}$. 
By \cite[Lemma 2]{Foight2019a}, since $Q\in\mathbb{R}^{n\times n}$ is a symmetric matrix and $B_{\tau}\in\mathbb{R}^{n\times n-1}$ has $\mathcal{N}(B_{\tau})=\mathbf{0}$, it follows that
\begin{align*}
\lambda_{\rm{min}}(B_{\tau}^{T}QB_{\tau})&\geq\lambda_{\rm{min}}(Q)\lambda_{\rm{min}}(B_{\tau}^{T}B_{\tau}),\\
\lambda_{\rm{max}}(B_{\tau}^{T}QB_{\tau})&\leq\lambda_{\rm{max}}(Q)\lambda_{\rm{max}}(B_{\tau}^{T}B_{\tau}).
\end{align*}
Similar bounds on the eigenvalues of $B_{c}^{T}FB_{c}$ can also be derived since $F\in\mathbb{R}^{|\mathcal{E}|\times |\mathcal{E}|}$ is a symmetric matrix and $B_{c}\in\mathbb{R}^{|\mathcal{E}|\times n-1}$ has $\mathcal{N}(B_{c})=\mathbf{0}$.
From \eqref{pf:weyl1} and \eqref{pf:weyl2} and these derived bounds, we obtain
\begin{align*}
\lambda_{\rm{min}}(BB^{T})&\geq\lambda_{\rm{min}}(Q)\lambda_{\rm{min}}(B_{\tau}^{T}B_{\tau})+\lambda_{\rm{min}}(F)\lambda_{\rm{min}}(B_{c}^{T}B_{c}),\\
\lambda_{\rm{max}}(BB^{T})&\leq\lambda_{\rm{max}}(Q)\lambda_{\rm{max}}(B_{\tau}^{T}B_{\tau})+\lambda_{\rm{max}}(F)\lambda_{\rm{max}}(B_{c}^{T}B_{c}).
\end{align*}
The proof is concluded by combining the last two inequalities with \eqref{pf:step1}.
\end{proof}

Consider the pairs of covariance matrices $(\Omega_{0},\Gamma_{0})$ and $(\Omega_{1},\Gamma_{1})$ such that $\Omega_{0}$ and $\Gamma_{0}$ share the same maximum and minimum eigenvalues with $\Omega_{1}$ and $\Gamma_{1}$, respectively. By Lemma \ref{lemma:covbounds}, the $\Hinf$-norms $\|\Sigma_{\tau,0}\|_{\infty}$ and $\|\Sigma_{\tau,1}\|_{\infty}$ that correspond to the systems with the particular choice of covariance matrices $(\Omega_{0},\Gamma_{0})$ and $(\Omega_{1},\Gamma_{1})$, respectively, will be governed by the same bounds. Equipped with the bounds computed in Lemma \ref{lemma:covbounds}, the remainder of the paper focuses on the special choice of covariance matrices $\Omega=\sigma_{w}E^{\frac{1}{2}}$ and $\Gamma=\sigma_{v}W^{\frac{1}{2}}$, as considered in \cite{Foight2019ts,Foight2019a}.
\begin{assm}
\label{assump}
The covariance matrices are defined as $\Omega=\sigma_{w}E^{\frac{1}{2}}$ and $\Gamma=\sigma_{v}W^{\frac{1}{2}}$.
\end{assm}
As per Theorem \ref{thm:bounds1}, the special choice of covariance matrices given in Assumption \ref{assump} allows one to compute alternative bound expressions for the $\Hinf$-norm of the system. In \S\ref{sec:optimization}, we make use of these expressions to derive new insights on $\Hinf$-norm minimization and formulate an optimization problem for the selection of edge weights and time scales. Furthermore, Lemma \ref{lemma:covbounds} provides an \emph{a priori} bound on the error resulting from estimating the $\Hinf$-norm of the system with general covariance matrices by that of a system with the special choice of covariance matrices that share the same aforementioned eigenvalue properties. This observation motivates the adoption of a heuristic in the optimization problem of \S\ref{sec:optimization} to tighten the alternative bounds. 

Let $\Omega=\sigma_{w}E^{\frac{1}{2}}$ and $\Gamma=\sigma_{v}W^{\frac{1}{2}}$ as per Assumption \ref{assump}. The resulting system is denoted by $\Tilde{\Sigma}_{\tau}$, and its transfer function matrix is given by
\begin{align}
\label{eq:tfsigma_withcov}
&\Tilde{\Sigma}_{\tau}(s)\\&=R^{T}(sI+L_{e,s}^{\tau}RWR^{T})^{-1}\begin{bmatrix}\sigma_{w}D_{\tau}^{T}E^{-\frac{1}{2}} & -\sigma_{v}L_{e,s}^{\tau}W^{\frac{1}{2}}\end{bmatrix}.
\end{align}

\begin{thm}
\label{thm:hinfexpression}
The $\Hinf$-norm of the system $\Tilde{\Sigma}_{\tau}$ defined in \eqref{eq:tfsigma_withcov} satisfies $\|\Tilde{\Sigma}_{\tau}\|_{\infty}^{2}=\bar{\sigma}(Z)$, where
\begin{equation}
\label{eq:Z}
Z=\sigma_{w}^{2}R^{T}(RWR^{T}L_{e,s}^{\tau}RWR^{T})^{-1}R
+\sigma_{v}^{2}R^{T}(RWR^{T})^{-1}R.
\end{equation}
\end{thm}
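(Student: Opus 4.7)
The plan is to invoke Lemma~\ref{lemma:zelazo2} to reduce the norm to a zero-frequency singular value computation, and then carry out a purely algebraic simplification of $\tilde\Sigma_\tau(0)\tilde\Sigma_\tau(0)^*$ that exploits the special covariance structure of Assumption~\ref{assump}. By Lemma~\ref{lemma:zelazo2}, the supremum in the definition of $\|\tilde\Sigma_\tau\|_\infty$ is attained at $\omega=0$, so $\|\tilde\Sigma_\tau\|_\infty^2 = \lambda_{\rm{max}}\bigl(\tilde\Sigma_\tau(0)\tilde\Sigma_\tau(0)^*\bigr)$. Since $\tilde\Sigma_\tau(0)\tilde\Sigma_\tau(0)^*$ is Hermitian positive semidefinite, its largest eigenvalue coincides with its largest singular value, so it suffices to establish the matrix identity $\tilde\Sigma_\tau(0)\tilde\Sigma_\tau(0)^* = Z$; then $\|\tilde\Sigma_\tau\|_\infty^2 = \bar{\sigma}(Z)$ follows immediately.

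To set up the algebra, abbreviate $M := L_{e,s}^{\tau}RWR^{T}$, which is invertible since both $L_{e,s}^{\tau}$ and $RWR^{T}$ are positive definite, as noted after Lemma~\ref{lemma:similarity}. Write the input matrix of $\tilde\Sigma_\tau$ under Assumption~\ref{assump} as $B = \bigl[\,\sigma_{w}D_{\tau}^{T}E^{-1/2},\ -\sigma_{v}L_{e,s}^{\tau}RW^{1/2}\,\bigr]$, so that $\tilde\Sigma_\tau(0) = R^{T}M^{-1}B$, and compute
$$BB^{T} = \sigma_{w}^{2}D_{\tau}^{T}E^{-1}D_{\tau} + \sigma_{v}^{2}L_{e,s}^{\tau}RWR^{T}L_{e,s}^{\tau} = \sigma_{w}^{2}L_{e,s}^{\tau} + \sigma_{v}^{2}ML_{e,s}^{\tau},$$
where I have used $L_{e,s}^{\tau}=D_{\tau}^{T}E^{-1}D_{\tau}$ together with the symmetry of $L_{e,s}^{\tau}$ and $RWR^{T}$.

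The final step is to expand $M^{-1}BB^{T}M^{-T}$ and telescope, using the factorizations $M^{-1} = (RWR^{T})^{-1}(L_{e,s}^{\tau})^{-1}$ and $M^{-T} = (L_{e,s}^{\tau})^{-1}(RWR^{T})^{-1}$. The $\sigma_{w}^{2}$ contribution gives $M^{-1}L_{e,s}^{\tau}M^{-T} = (RWR^{T})^{-1}(L_{e,s}^{\tau})^{-1}(RWR^{T})^{-1}$, while for the $\sigma_{v}^{2}$ contribution the cancellation $M^{-1}M = I$ collapses $M^{-1}(ML_{e,s}^{\tau})M^{-T}$ all the way down to $L_{e,s}^{\tau}M^{-T}=(RWR^{T})^{-1}$. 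Sandwiching the sum with $R^{T}$ on the left and $R$ on the right and rewriting $(RWR^{T})^{-1}(L_{e,s}^{\tau})^{-1}(RWR^{T})^{-1}$ as $(RWR^{T}\,L_{e,s}^{\tau}\,RWR^{T})^{-1}$ reproduces exactly~\eqref{eq:Z}. The only delicate point is the bookkeeping in this telescoping step: keeping track of which factor of $M^{-1}$ cancels which factor of $L_{e,s}^{\tau}$ is what lets the $\sigma_{v}^{2}$ term simplify so dramatically, and everything else is a direct substitution.
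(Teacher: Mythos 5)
Your proposal is correct and follows the same route as the paper: invoke Lemma~\ref{lemma:zelazo2} to evaluate at $\omega=0$ and then simplify $\Tilde{\Sigma}_{\tau}(0)\Tilde{\Sigma}_{\tau}(0)^{*}$, the only difference being that the paper states the algebra ``follows immediately'' while you carry it out explicitly (and correctly, including using the dimensionally consistent input block $-\sigma_{v}L_{e,s}^{\tau}RW^{\frac{1}{2}}$, which is what substituting $\Gamma=\sigma_{v}W^{\frac{1}{2}}$ into \eqref{eq:tfsigma} actually yields). The telescoping via $M^{-1}=(RWR^{T})^{-1}(L_{e,s}^{\tau})^{-1}$ is exactly the computation needed to arrive at \eqref{eq:Z}.
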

\begin{proof}
By Lemma \ref{lemma:zelazo2}, $\|\Tilde{\Sigma}_{\tau}\|_{\infty}^{2}=\lambda_{\rm{max}}\big(\Tilde{\Sigma}_{\tau}(s)\Tilde{\Sigma}_{\tau}(s)^{*}|_{s=0}\big)$, from which the result follows immediately.
\end{proof}

As mentioned in \S\ref{sec:problemsetup}, the edge consensus model $\Sigma_{\tau}$ defined in \eqref{eq:edgeagreement1} (and hence, $\Tilde{\Sigma}_{\tau}$ defined in \eqref{eq:tfsigma_withcov}) corresponds to the edge states of the given spanning tree $\G_{\tau}$ of the underlying system graph $\G$.
Since our problem setup allows for arbitrary time-scaled agents and weighted interconnections, in general, different choices of the spanning tree $\G_{\tau}$ yield different values of the $\Hinf$-norm of the corresponding system $\Tilde{\Sigma}_{\tau}$. We illustrate this observation by considering the graph $\G$ shown in Figure \ref{fig:graphJandtrees}. This graph has $\mathbf{det}(RR^T)=3$ spanning tree subgraphs \cite{Zelazo2011a}, denoted by $\G_{\tau_1}$, $\G_{\tau_2}$, and $\G_{\tau_3}$, and are also shown in Figure \ref{fig:graphJandtrees}. We assign the edge weights as $w_1=4$, $w_2=8$, and $w_3=12$ and the time scales $\epsilon_1=0.1$, $\epsilon_2=0.4$, and $\epsilon_3=0.8$. We further assume that $\sigma_w=\sigma_v=1$. Then, the values of the $\Hinf$-norm of the corresponding system $\Tilde{\Sigma}_{\tau}$ when the spanning trees $\G_{\tau_1}$, $\G_{\tau_2}$, and $\G_{\tau_3}$ are considered are $\|\Tilde{\Sigma}_{\tau_1}\|_{\infty}=0.4230$, $\|\Tilde{\Sigma}_{\tau_2}\|_{\infty}=0.4211$, and $\|\Tilde{\Sigma}_{\tau_3}\|_{\infty}=0.4237$, respectively. These $\Hinf$-norm values are computed using the expression in Theorem \ref{thm:hinfexpression} and are further verified using the built-in MATLAB command \texttt{hinfnorm}. With this in mind, the remainder of the results assume a given choice of the spanning tree for the computation of $\|\Tilde{\Sigma}_{\tau}\|_{\infty}$ and the corresponding bounds. 

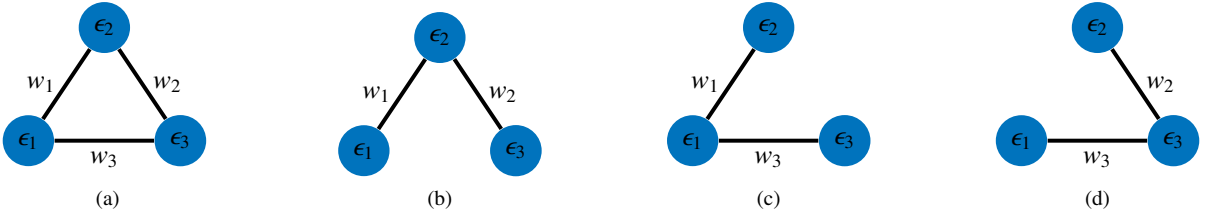
\begin{figure*}
\centering
\subfloat[]{
\begin{tikzpicture}
\node[shape=circle,fill=bluee] (A) at (0,0) {$\epsilon_{1}$};
\node[shape=circle,fill=bluee] (B) at (1,1.5) {$\epsilon_{2}$};
\node[shape=circle,fill=bluee] (C) at (2,0) {$\epsilon_{3}$};

\path [draw,line width=1.5pt,-,black!100] (A) edge node[left] {$w_{1}$} (B);
\path [draw,line width=1.5pt,-,black!100] (B) edge node[right] {$w_{2}$} (C);
\path [draw,line width=1.5pt,-,black!100] (C) edge node[below] {$w_{3}$} (A);
\end{tikzpicture}
\label{graphH}
}
\hfill
\subfloat[]{
\begin{tikzpicture}
\node[shape=circle,fill=bluee] (A) at (0,0) {$\epsilon_{1}$};
\node[shape=circle,fill=bluee] (B) at (1,1.5) {$\epsilon_{2}$};
\node[shape=circle,fill=bluee] (C) at (2,0) {$\epsilon_{3}$};

\path [draw,line width=1.5pt,-,black!100] (A) edge node[left] {$w_{1}$} (B);
\path [draw,line width=1.5pt,-,black!100] (B) edge node[right] {$w_{2}$} (C);
\end{tikzpicture}
\label{tree1}
}
\hfill
\subfloat[]{
\begin{tikzpicture}
\node[shape=circle,fill=bluee] (A) at (0,0) {$\epsilon_{1}$};
\node[shape=circle,fill=bluee] (B) at (1,1.5) {$\epsilon_{2}$};
\node[shape=circle,fill=bluee] (C) at (2,0) {$\epsilon_{3}$};

\path [draw,line width=1.5pt,-,black!100] (A) edge node[left] {$w_{1}$} (B);
\path [draw,line width=1.5pt,-,black!100] (C) edge node[below] {$w_{3}$} (A);
\end{tikzpicture}
\label{tree2}
}
\hfill
\subfloat[]{
\begin{tikzpicture}
\node[shape=circle,fill=bluee] (A) at (0,0) {$\epsilon_{1}$};
\node[shape=circle,fill=bluee] (B) at (1,1.5) {$\epsilon_{2}$};
\node[shape=circle,fill=bluee] (C) at (2,0) {$\epsilon_{3}$};

\path [draw,line width=1.5pt,-,black!100] (B) edge node[right] {$w_{2}$} (C);
\path [draw,line width=1.5pt,-,black!100] (C) edge node[below] {$w_{3}$} (A);
\end{tikzpicture}
\label{tree3}
}
\caption{(a) Graph $\mathcal{\G}$ that consists of $3$ nodes and $3$ edges and its three spanning tree subgraphs (b) $\mathcal{\G}_{\tau_1}$, (c) $\mathcal{\G}_{\tau_2}$, and (d) $\mathcal{\G}_{\tau_3}$. A time scale $\epsilon_{i}$ is associated with every node $i\in\mathcal{V}$, and a weight $w_{l}$ is associated with every edge $l\in\{1,\ldots,|\mathcal{E}|\}$.}
\label{fig:graphJandtrees}
\end{figure*}

To find lower and upper bounds on $\|\Tilde{\Sigma}_{\tau}\|_{\infty}$ obtained in Theorem \ref{thm:hinfexpression}, we consider a modified system $\Pi_{\tau}$ defined by
\begin{equation}
\label{eq:systempi}
\Pi_{\tau}(s)=W^{\frac{1}{2}}\Tilde{\Sigma}_{\tau}(s).
\end{equation} 
System $\Pi_{\tau}$ is defined similarly to system $\Tilde{\Sigma}_{\tau}$, with the difference that the output matrix is weighted by $W^{\frac{1}{2}}$.
\begin{lem}
\label{lemma:HinfPi}
The $\Hinf$-norm of the system $\Pi_{\tau}$ defined in \eqref{eq:systempi} satisfies $\|\Pi_{\tau}\|_{\infty}^{2}=\bar{\sigma}\big(\sigma_{w}^{2}X+\sigma_{v}^{2}Y\big)$, where
\begin{align}
X&=W^{\frac{1}{2}}R^{T}(RWR^{T}L_{e,s}^{\tau}RWR^{T})^{-1}RW^{\frac{1}{2}}, \label{eq:X}\\
Y&=W^{\frac{1}{2}}R^{T}(RWR^{T})^{-1}RW^{\frac{1}{2}}. \label{eq:Y}
\end{align}
\end{lem}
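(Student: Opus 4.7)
The plan is to mirror the computation used in Theorem \ref{thm:hinfexpression}, adapted to the system $\Pi_{\tau}$ whose output matrix is pre-multiplied by $W^{\frac{1}{2}}$. The first step is to argue that $\|\Pi_{\tau}\|_{\infty}=\bar{\sigma}\big(\Pi_{\tau}(0)\big)$. Since $\Pi_{\tau}$ shares the state matrix $-L_{e,s}^{\tau}RWR^{T}$ and the input matrix of $\Tilde{\Sigma}_{\tau}$, the diagonalization of the state matrix from Proposition \ref{prop:statematrixdiag} applies verbatim, and only the output matrix is post-multiplied by the constant matrix $W^{\frac{1}{2}}$. The argument behind Lemma \ref{lemma:zelazo2} therefore carries over unchanged: the supremum over $\omega\in\mathbb{R}$ of the largest singular value of $\Pi_{\tau}(j\omega)$ is attained at $\omega=0$, and hence $\|\Pi_{\tau}\|_{\infty}^{2}=\lambda_{\max}\big(\Pi_{\tau}(0)\Pi_{\tau}(0)^{*}\big)$.

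Next I would evaluate $\Pi_{\tau}(0)$ explicitly. Writing $M=L_{e,s}^{\tau}$ and $N=RWR^{T}$ for brevity (both symmetric and positive definite), \eqref{eq:tfsigma_withcov} and \eqref{eq:systempi} give
\begin{equation*}
\Pi_{\tau}(0)=W^{\frac{1}{2}}R^{T}(MN)^{-1}\begin{bmatrix}\sigma_{w}D_{\tau}^{T}E^{-\frac{1}{2}} & -\sigma_{v}MRW^{\frac{1}{2}}\end{bmatrix}.
\end{equation*}
Multiplying out $\Pi_{\tau}(0)\Pi_{\tau}(0)^{*}$, the two off-diagonal cross terms vanish when the product collapses, leaving the block-diagonal combination
\begin{equation*}
BB^{T}=\sigma_{w}^{2}D_{\tau}^{T}E^{-1}D_{\tau}+\sigma_{v}^{2}MNM=\sigma_{w}^{2}M+\sigma_{v}^{2}MNM,
\end{equation*}
sandwiched between $W^{\frac{1}{2}}R^{T}(MN)^{-1}$ and its conjugate transpose.

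I would then simplify each resulting term using the identities $(MN)^{-1}M=N^{-1}$ and $(MN)^{-T}=(NM)^{-1}=M^{-1}N^{-1}$, which hold by the symmetry of $M$ and $N$. For the $\sigma_{w}^{2}$ contribution,
\begin{equation*}
(MN)^{-1}M(MN)^{-T}=N^{-1}M^{-1}N^{-1}=(NMN)^{-1},
\end{equation*}
while for the $\sigma_{v}^{2}$ contribution the telescoping is even cleaner,
\begin{equation*}
(MN)^{-1}MNM(MN)^{-T}=M\cdot M^{-1}N^{-1}=N^{-1}.
\end{equation*}
Substituting back yields
\begin{equation*}
\Pi_{\tau}(0)\Pi_{\tau}(0)^{*}=\sigma_{w}^{2}W^{\frac{1}{2}}R^{T}(NMN)^{-1}RW^{\frac{1}{2}}+\sigma_{v}^{2}W^{\frac{1}{2}}R^{T}N^{-1}RW^{\frac{1}{2}}=\sigma_{w}^{2}X+\sigma_{v}^{2}Y,
\end{equation*}
using the definitions \eqref{eq:X}--\eqref{eq:Y}. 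Taking $\lambda_{\max}$ of both sides, and recalling that $\bar{\sigma}$ and $\lambda_{\max}$ coincide on positive semidefinite matrices, delivers the claimed identity.

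The only delicate point is the first step, i.e., justifying that the $\Hinf$-norm of $\Pi_{\tau}$ is attained at $\omega=0$ despite the modified output; I expect this to be a one-line extension of the Lemma \ref{lemma:zelazo2} argument, since left-multiplication by the constant matrix $W^{\frac{1}{2}}$ commutes with the frequency-dependent resolvent and therefore cannot shift the location of the supremum. Everything else is routine matrix manipulation, with the main bookkeeping risk being in verifying the two telescoping cancellations above.
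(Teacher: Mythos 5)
Your proposal is correct and follows essentially the same route as the paper's (much terser) proof: both argue that the supremum for $\Pi_{\tau}$ is still attained at $\omega=0$ by the same reasoning as Lemma~\ref{lemma:zelazo2}, and then evaluate $\lambda_{\rm{max}}\big(\Pi_{\tau}(0)\Pi_{\tau}(0)^{*}\big)$; your telescoping identities $(MN)^{-1}M(MN)^{-T}=(NMN)^{-1}$ and $(MN)^{-1}MNM(MN)^{-T}=N^{-1}$ check out and simply make explicit the algebra the paper leaves implicit. (One cosmetic point: for a block-row $B=[B_1\;\;B_2]$ there are no cross terms in $BB^{T}=B_1B_1^{T}+B_2B_2^{T}$ to begin with, so nothing needs to ``vanish.'')
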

\begin{proof}
A result similar to Lemma \ref{lemma:zelazo2} can be derived to show that $\|\Pi_{\tau}\|_{\infty}=\bar{\sigma}\big(W^{\frac{1}{2}}R^{T}TH(0)\big)$, i.e., introducing the output equation $W^{\frac{1}{2}}R^{T}T\hat{x}_{\tau}(t)$ does not affect the frequency at which the supremum occurs. Thus, the desired expression follows from $\|\Pi_{\tau}\|_{\infty}^{2}=\lambda_{\rm{max}}\big(\Pi_{\tau}(s)\Pi_{\tau}(s)^{*}|_{s=0}\big)$.
\end{proof}
The expression in Lemma \ref{lemma:HinfPi} can be further simplified as explained next. 

\begin{lem}
\label{lemma:weyl}
The matrices $X$ and $Y$ defined in \eqref{eq:X} and \eqref{eq:Y}, respectively, satisfy
\begin{equation*}
\lambda_{\rm{max}}(X+Y)=\lambda_{\rm{max}}(X)+\lambda_{\rm{max}}(Y).
\end{equation*}
\end{lem}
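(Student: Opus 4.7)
The plan is to show that $X$ and $Y$ share a common eigenvector that attains each matrix's maximum eigenvalue; once this is established, combining it with the standard Weyl inequality $\lambda_{\rm{max}}(X+Y)\leq \lambda_{\rm{max}}(X)+\lambda_{\rm{max}}(Y)$ forces the equality claimed in the lemma.

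The key observation I would make first is that $Y$ is an \emph{orthogonal projection matrix}. A direct computation gives
\[
Y^{2}=W^{\frac{1}{2}}R^{T}(RWR^{T})^{-1}(RWR^{T})(RWR^{T})^{-1}RW^{\frac{1}{2}}=Y,
\]
and $Y=Y^{T}$ by inspection. Since $R=[\,I\ T_{\tau}^{c}\,]$ has full row rank and $W\succ 0$, the matrix $W^{\frac{1}{2}}R^{T}$ is nonzero, so $Y\neq 0$ and therefore $\lambda_{\rm{max}}(Y)=1$. Moreover, a brief argument shows $\mathrm{range}(Y)=\mathrm{range}(W^{\frac{1}{2}}R^{T})$: any $w=W^{\frac{1}{2}}R^{T}v$ satisfies $Y w = W^{\frac{1}{2}}R^{T}(RWR^{T})^{-1}(RWR^{T})v = w$. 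Hence $Y$ acts as the identity on the subspace $\mathcal{S}:=\mathrm{range}(W^{\frac{1}{2}}R^{T})$.

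Next I would note that $\mathrm{range}(X)\subseteq \mathcal{S}$, since $X$ has $W^{\frac{1}{2}}R^{T}$ as its leftmost factor. Assuming $\lambda_{\rm{max}}(X)>0$ (otherwise $X=0$ and the claim is immediate from $X\succeq 0$), any unit eigenvector $u$ of $X$ with eigenvalue $\lambda_{\rm{max}}(X)$ lies in $\mathrm{range}(X)\subseteq \mathcal{S}$, and so $Yu=u$. Consequently,
\[
(X+Y)u=\bigl(\lambda_{\rm{max}}(X)+1\bigr)u,
\]
which yields
\[
\lambda_{\rm{max}}(X+Y)\ \geq\ \lambda_{\rm{max}}(X)+1\ =\ \lambda_{\rm{max}}(X)+\lambda_{\rm{max}}(Y).
\]
Together with Weyl's inequality \cite[Theorem 4.3.1]{JohnsonHorn_MatrixAnalysis} giving the reverse bound, equality follows.

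I do not anticipate a serious obstacle. The only step that requires a small leap is recognizing that $Y$ is idempotent; this is really the content of the lemma, because once $Y$ is identified as the orthogonal projection onto $\mathcal{S}$ and $X$ is seen to map into $\mathcal{S}$, the maximizing direction of $X$ is automatically a maximizing direction of $Y$, and superadditivity of $\lambda_{\rm{max}}$ at this particular pair holds for free.
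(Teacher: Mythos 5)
Your proof is correct, and its overall strategy matches the paper's: exhibit a vector that is simultaneously a top eigenvector of $X$ and of $Y$, conclude that $\lambda_{\rm{max}}(X)+\lambda_{\rm{max}}(Y)$ is an eigenvalue of $X+Y$, and close with Weyl's inequality for the reverse bound. Where you differ is in how you show that the maximizing eigenvector $u$ of $X$ satisfies $Yu=u$: the paper argues that $X$ and $Y$ commute and are diagonalizable, hence share an eigenvector matrix, and then uses $\mathcal{N}(X)=\mathcal{N}(Y)$ together with idempotence of $Y$ to pin the corresponding eigenvalue of $Y$ to $1$; you instead observe that $Y$ is the orthogonal projection onto $\mathcal{S}=\mathrm{range}(W^{\frac{1}{2}}R^{T})$ and that $\mathrm{range}(X)\subseteq\mathcal{S}$, so $Y$ fixes \emph{every} eigenvector of $X$ with nonzero eigenvalue. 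Your route is arguably cleaner: it avoids invoking the simultaneous-diagonalizability fact (which, for repeated eigenvalues, only guarantees the existence of a common eigenbasis rather than that an arbitrary top eigenvector of $X$ is an eigenvector of $Y$), and it makes the geometric reason for the equality explicit. The paper's route, in turn, records the commutation relation $XY=YX$, which is a reusable structural fact about this pair of matrices. Both are complete and correct.
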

\begin{proof}
Let $q\neq \mathbf{0}$ be the eigenvector of $X$ corresponding to $\lambda_{\rm{max}}(X)$. Then, $Xq=\lambda_{\rm{max}}(X)q\neq \mathbf{0}$, and consequently $q\not\in\mathcal{N}(X)$. That $\lambda_{\rm{max}}(X)q\neq \mathbf{0}$ follows from the fact that $\lambda_{\rm{max}}(X)\neq 0$. Namely, $X=X^T\succeq0$, and so $\mathbf{trace}(X)=0$ if and only if $X$ is the zero matrix \cite[Corollary 7.1.5]{JohnsonHorn_MatrixAnalysis}.
Since $R^{T}$ has full column rank, $\mathcal{N}(X)=\mathcal{N}(Y)=\mathcal{N}(RW^{\frac{1}{2}})$, and so we deduce that $q\not\in\mathcal{N}(Y)$. Moreover, since X and Y commute, i.e., $XY=YX$, and are both diagonalizable, they share the same eigenvector matrix \cite[Chapter 5]{strang_linalgebra}. Therefore, $q$ is an eigenvector of $Y$, and we can write $Yq=yq$, where $y$ is an eigenvalue of $Y$. Since $q\not\in\mathcal{N}(Y)$, then $Yq=yq\neq\mathbf{0}$ and $y\neq 0$. It can be verified that $Y$ is a projection matrix, i.e., $Y^{2}=Y$, and so it has $n-1$ eigenvalues at $1$ and the remaining eigenvalues at zero. Hence, $Yq=\lambda_{\rm{max}}(Y)q=q$. 

Finally, $(X+Y)q=\big(\lambda_{\rm{max}}(X)+\lambda_{\rm{max}}(Y)\big)q$. Given that $q\neq \mathbf{0}$ and $\lambda_{\rm{max}}(X)+\lambda_{\rm{max}}(Y)>0$, then $\lambda_{\rm{max}}(X)+\lambda_{\rm{max}}(Y)$ is an eigenvalue of $X+Y$. By Weyl's Theorem \cite[Theorem 4.3.1]{JohnsonHorn_MatrixAnalysis}, $\lambda_{\rm{max}}(X+Y)\leq \lambda_{\rm{max}}(X)+\lambda_{\rm{max}}(Y)$. Hence, this inequality is binding, and $\lambda_{\rm{max}}(X+Y)= \lambda_{\rm{max}}(X)+\lambda_{\rm{max}}(Y)$.
\end{proof}

\begin{thm} 
\label{thm:hinfpi}
The $\Hinf$-norm of the system $\Pi_{\tau}$ defined in \eqref{eq:systempi} satisfies
\begin{equation}
\label{eq:HinfPi}
\|\Pi_{\tau}\|_{\infty}^{2}=\sigma_{w}^{2}\bar{\sigma}(X)+\sigma_{v}^{2},
\end{equation}
where $X$ is defined in \eqref{eq:X}.
\end{thm}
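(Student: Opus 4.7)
The plan is to combine Lemma~\ref{lemma:HinfPi} with a weighted extension of the spectral alignment argument that underlies Lemma~\ref{lemma:weyl}. Lemma~\ref{lemma:HinfPi} already gives $\|\Pi_\tau\|_\infty^2 = \lambda_{\max}(\sigma_w^2 X + \sigma_v^2 Y)$, so what remains is to show that
\[
\lambda_{\max}(\sigma_w^2 X + \sigma_v^2 Y) = \sigma_w^2 \lambda_{\max}(X) + \sigma_v^2.
\]

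First I would recycle the structural facts established inside the proof of Lemma~\ref{lemma:weyl}: $X$ and $Y$ are symmetric positive semidefinite, they commute, and therefore admit a common orthonormal eigenbasis; $Y$ is an orthogonal projection with $\mathcal{N}(Y) = \mathcal{N}(RW^{1/2}) = \mathcal{N}(X)$, hence its nonzero eigenvalues all equal $1$, and in particular $\lambda_{\max}(Y) = 1$. Taking a unit eigenvector $q$ of $X$ with $Xq = \lambda_{\max}(X) q$, the same reasoning as in the previous lemma shows $q \notin \mathcal{N}(X) = \mathcal{N}(Y)$, so $Yq$ is a nonzero eigenvector of $Y$ and must satisfy $Yq = q$. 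Consequently,
\[
(\sigma_w^2 X + \sigma_v^2 Y) q = \bigl(\sigma_w^2 \lambda_{\max}(X) + \sigma_v^2\bigr) q,
\]
exhibiting $\sigma_w^2 \lambda_{\max}(X) + \sigma_v^2$ as an eigenvalue of $\sigma_w^2 X + \sigma_v^2 Y$. Weyl's inequality supplies the matching upper bound $\lambda_{\max}(\sigma_w^2 X + \sigma_v^2 Y) \le \sigma_w^2 \lambda_{\max}(X) + \sigma_v^2 \lambda_{\max}(Y) = \sigma_w^2 \lambda_{\max}(X) + \sigma_v^2$, so equality holds and, since $X \succeq 0$ gives $\bar{\sigma}(X) = \lambda_{\max}(X)$, the stated identity follows.

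The main obstacle is conceptual rather than computational: for two commuting symmetric matrices the largest eigenvalue of a positive combination need not be the corresponding combination of their largest eigenvalues, because the leading eigenvector of one summand might lie in a non-leading eigenspace of the other. The essential leverage here is that $Y$ has only two distinct eigenvalues $\{0,1\}$ and that $\mathcal{N}(Y) = \mathcal{N}(X)$, so any eigenvector of $X$ associated with a nonzero eigenvalue is automatically in the top eigenspace of $Y$. Once this alignment is made explicit, the scaling by $\sigma_w^2$ and $\sigma_v^2$ passes through the argument transparently, and no further calculation is required.
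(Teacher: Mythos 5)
Your proposal is correct and follows essentially the same route as the paper: the paper likewise starts from Lemma~\ref{lemma:HinfPi} and then applies ``a slightly modified version of Lemma~\ref{lemma:weyl}'' together with the fact that $Y$ is a projection, which is exactly the eigenvector-alignment argument you spell out. The only difference is that you make the modified version of Lemma~\ref{lemma:weyl} explicit for the scaled matrices $\sigma_w^2 X$ and $\sigma_v^2 Y$, which the paper leaves implicit.
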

\begin{proof}
From Lemma \ref{lemma:HinfPi}, $\|\Pi_{\tau}\|_{\infty}^{2}=\lambda_{\rm{max}}(\sigma_{w}^{2}X+\sigma_{v}^{2}Y)$.
Applying a slightly modified version of Lemma \ref{lemma:weyl}, we get
\begin{equation*}
\|\Pi_{\tau}\|_{\infty}^{2}=\lambda_{\rm{max}}(\sigma_{w}^{2}X)+\lambda_{\rm{max}}(\sigma_{v}^{2}Y)=\sigma_{w}^{2}\lambda_{\rm{max}}(X)+\sigma_{v}^{2},
\end{equation*}
where the final equality follows from the fact that $Y$ is a projection matrix, i.e., $\lambda_{\rm{max}}(Y)=1$.
\end{proof}
By Theorem \ref{thm:hinfpi}, it can be seen that the $\Hinf$-norm of the system $\Pi_{\tau}$ only requires the computation of the largest eigenvalue of $X$ defined in \eqref{eq:X}. Further simplifications can be performed on graphs with equal edge weights, i.e., $W=\rho I$. The case $\rho=1$ corresponds to unweighted graphs.
\begin{crly}
\label{crly:HinfWithoutWeights}
Consider the system $\Tilde{\Sigma}_{\tau}$ defined in \eqref{eq:tfsigma_withcov}. Assume that all the edge weights are equal, i.e., $W=\rho I$ for some $\rho>0$. Then, the $\Hinf$-norm of the system $\Tilde{\Sigma}_{\tau}$ satisfies
\begin{equation*}
\|\Tilde{\Sigma}_{\tau}\|_{\infty}^{2}=\frac{1}{\rho^2}\sigma_{w}^{2}\bar{\sigma}(R^{T}(RR^{T}L_{e,s}^{\tau}RR^{T})^{-1}R)+\frac{1}{\rho}\sigma_{v}^{2}.
\end{equation*}
\end{crly}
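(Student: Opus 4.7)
The plan is to reduce the corollary directly to Theorem~\ref{thm:hinfpi} by exploiting the fact that when $W = \rho I$, the weighting $W^{1/2} = \sqrt{\rho}\,I$ that turns $\Tilde{\Sigma}_\tau$ into $\Pi_\tau$ is just a scalar multiple of the identity. Concretely, from the definition $\Pi_\tau(s) = W^{1/2}\Tilde{\Sigma}_\tau(s)$ in \eqref{eq:systempi} we immediately get $\Pi_\tau(s) = \sqrt{\rho}\,\Tilde{\Sigma}_\tau(s)$, so that
\begin{equation*}
\|\Tilde{\Sigma}_\tau\|_\infty^{2} \;=\; \frac{1}{\rho}\,\|\Pi_\tau\|_\infty^{2}.
\end{equation*}

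Next I would substitute the closed-form expression for $\|\Pi_\tau\|_\infty^2$ provided by Theorem~\ref{thm:hinfpi}, namely $\|\Pi_\tau\|_\infty^2 = \sigma_w^2\,\bar{\sigma}(X) + \sigma_v^2$, and specialize the matrix $X$ in \eqref{eq:X} to $W = \rho I$. Carrying the factor of $\rho$ through the inner inverse gives
\begin{equation*}
X \;=\; \sqrt{\rho}\,R^{T}\bigl(\rho^{2}RR^{T}L_{e,s}^{\tau}RR^{T}\bigr)^{-1}R\sqrt{\rho} \;=\; \frac{1}{\rho}\,R^{T}\bigl(RR^{T}L_{e,s}^{\tau}RR^{T}\bigr)^{-1}R,
\end{equation*}
so that $\bar{\sigma}(X) = \frac{1}{\rho}\bar{\sigma}\bigl(R^{T}(RR^{T}L_{e,s}^{\tau}RR^{T})^{-1}R\bigr)$.

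Combining the two displays above yields
\begin{equation*}
\|\Tilde{\Sigma}_\tau\|_\infty^{2} \;=\; \frac{1}{\rho}\Bigl(\frac{\sigma_w^{2}}{\rho}\,\bar{\sigma}\bigl(R^{T}(RR^{T}L_{e,s}^{\tau}RR^{T})^{-1}R\bigr) + \sigma_v^{2}\Bigr),
\end{equation*}
which is exactly the claimed identity.

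There is essentially no hard step here: the entire argument is a scaling manipulation, and both ingredients (the definition of $\Pi_\tau$ and the formula of Theorem~\ref{thm:hinfpi}) are already available. The only minor care point is to keep track of the two appearances of $\rho$ — one from the outer $W^{1/2}$ that relates $\Pi_\tau$ to $\Tilde{\Sigma}_\tau$, and one from the $RWR^{T}$ factors inside $X$ — so that the final exponents $1/\rho^{2}$ and $1/\rho$ come out correctly.
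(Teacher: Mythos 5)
Your proof is correct and follows exactly the paper's route: the paper likewise observes that $W=\rho I$ gives $\Pi_{\tau}(s)=\sqrt{\rho}\,\Tilde{\Sigma}_{\tau}(s)$, hence $\|\Tilde{\Sigma}_{\tau}\|_{\infty}^{2}=\frac{1}{\rho}\|\Pi_{\tau}\|_{\infty}^{2}$, and then invokes Theorem~\ref{thm:hinfpi}. You merely spell out the specialization of $X$ that the paper leaves as ``follows immediately,'' and your bookkeeping of the two factors of $\rho$ is accurate.
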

\begin{proof}
For the special case wherein $W=\rho I$, it follows that $\Pi_{\tau}(s)=\sqrt{\rho}\Tilde{\Sigma}_{\tau}(s)$, and so $\|\Tilde{\Sigma}_{\tau}\|_{\infty}^{2}=\frac{1}{\rho}\|\Pi_{\tau}\|_{\infty}^{2}$, from which the desired result follows immediately.
\end{proof}
\begin{rmk}
For the special case of $W=\rho I$, the second term in $Z$ defined in \eqref{eq:Z}, i.e., $R^{T}(RR^{T})^{-1}R$, is a projection matrix \cite[Theorem 3.7]{Zelazo2011a}. Hence, a slightly modified version of Lemma \ref{lemma:weyl} can be applied to simplify the expression for $\|\Tilde{\Sigma}_{\tau}\|_{\infty}^{2}$ in Theorem \ref{thm:hinfexpression} directly. For a general weight matrix $W$, it is not always true that $R^{T}(RWR^{T})^{-1}R$ is a projection matrix. For this reason, the system $\Pi_{\tau}$ is considered, wherein the corresponding term $Y$ is a projection matrix.
\end{rmk}

The expression for $\|\Pi_{\tau}\|_{\infty}^{2}$ obtained in \eqref{eq:HinfPi} can be used to calculate new upper and lower bounds on the $\Hinf$-norm of the original system defined in \eqref{eq:tfsigma_withcov}.
\begin{thm} Consider systems $\tilde{\Sigma}_{\tau}$ and $\Pi_\tau$ defined in \eqref{eq:tfsigma_withcov} and \eqref{eq:systempi}, respectively. The $\Hinf$-norm of the system $\Tilde{\Sigma}_{\tau}$ satisfies
\label{thm:bounds1}
\begin{equation*}
\frac{\|\Pi_{\tau}\|_{\infty}}{\lambda_{\rm{max}}(W^{\frac{1}{2}})}\leq\|\Tilde{\Sigma}_{\tau}\|_{\infty}\leq \frac{\|\Pi_{\tau}\|_{\infty}}{\lambda_{\rm{min}}(W^{\frac{1}{2}})}.
\end{equation*}
\end{thm}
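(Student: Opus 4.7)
The plan is to exploit the defining relation $\Pi_{\tau}(s)=W^{\frac{1}{2}}\tilde{\Sigma}_{\tau}(s)$ together with the sub-multiplicativity of the largest singular value, $\bar{\sigma}(AB)\leq \bar{\sigma}(A)\bar{\sigma}(B)$, applied pointwise in frequency before taking the supremum. Since $W\succ 0$ is diagonal, $W^{\frac{1}{2}}$ and $W^{-\frac{1}{2}}$ are symmetric positive definite, so $\bar{\sigma}(W^{\frac{1}{2}})=\lambda_{\rm{max}}(W^{\frac{1}{2}})$ and $\bar{\sigma}(W^{-\frac{1}{2}})=\lambda_{\rm{max}}(W^{-\frac{1}{2}})=1/\lambda_{\rm{min}}(W^{\frac{1}{2}})$. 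These identities convert the singular value bounds into the eigenvalue form stated in the theorem.

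For the lower bound, I would start directly from $\Pi_{\tau}(j\omega)=W^{\frac{1}{2}}\tilde{\Sigma}_{\tau}(j\omega)$ and write
\begin{equation*}
\bar{\sigma}\bigl(\Pi_{\tau}(j\omega)\bigr)\leq \bar{\sigma}(W^{\frac{1}{2}})\,\bar{\sigma}\bigl(\tilde{\Sigma}_{\tau}(j\omega)\bigr)=\lambda_{\rm{max}}(W^{\frac{1}{2}})\,\bar{\sigma}\bigl(\tilde{\Sigma}_{\tau}(j\omega)\bigr).
\end{equation*}
Taking the supremum over $\omega\in\mathbb{R}$ and dividing by $\lambda_{\rm{max}}(W^{\frac{1}{2}})$ yields $\|\Pi_{\tau}\|_{\infty}/\lambda_{\rm{max}}(W^{\frac{1}{2}})\leq \|\tilde{\Sigma}_{\tau}\|_{\infty}$.

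For the upper bound, I would invert the relation, writing $\tilde{\Sigma}_{\tau}(s)=W^{-\frac{1}{2}}\Pi_{\tau}(s)$ (valid since $W\succ 0$), and apply sub-multiplicativity again:
\begin{equation*}
\bar{\sigma}\bigl(\tilde{\Sigma}_{\tau}(j\omega)\bigr)\leq \bar{\sigma}(W^{-\frac{1}{2}})\,\bar{\sigma}\bigl(\Pi_{\tau}(j\omega)\bigr)=\frac{1}{\lambda_{\rm{min}}(W^{\frac{1}{2}})}\,\bar{\sigma}\bigl(\Pi_{\tau}(j\omega)\bigr).
\end{equation*}
Taking the supremum over $\omega$ gives the desired upper bound.

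There is really no serious obstacle here; the whole argument is a two-line application of sub-multiplicativity once one observes that, because $W^{\frac{1}{2}}$ is symmetric positive definite, its spectral norm equals its largest eigenvalue, and the spectral norm of its inverse equals the reciprocal of its smallest eigenvalue. The only subtlety worth flagging in the write-up is that the chain of inequalities must be formed on the singular values \emph{before} taking the supremum in $\omega$, so that the inequality survives the sup, rather than trying to manipulate the two $\mathcal{H}_\infty$-norms directly.
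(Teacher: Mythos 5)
Your proof is correct, but it takes a genuinely different route from the paper. The paper's proof stays inside its $\omega=0$ eigenvalue framework: it invokes Lemma~\ref{lemma:zelazo2} (and its analogue for $\Pi_\tau$) to write $\|\Tilde{\Sigma}_{\tau}\|_{\infty}^{2}=\lambda_{\rm{max}}(Z)$ and $\|\Pi_{\tau}\|_{\infty}^{2}=\lambda_{\rm{max}}(V)$ with $V=W^{\frac{1}{2}}ZW^{\frac{1}{2}}$, and then applies the eigenvalue-product bounds \eqref{eq:bounds} (obtained from the majorisation result of Lemma~\ref{lemma:eigproperty}) twice to sandwich $\lambda_{\rm{max}}(Z)$ between multiples of $\lambda_{\rm{max}}(V)$. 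You instead work pointwise in frequency with the defining relation $\Pi_{\tau}(j\omega)=W^{\frac{1}{2}}\Tilde{\Sigma}_{\tau}(j\omega)$ and its inverse, use sub-multiplicativity of $\bar{\sigma}$, and pass to the supremum. Both arguments are valid, and your remark about ordering the inequality before the supremum is exactly the right point of care. Your version is more elementary and more general: it needs neither the fact that the supremum is attained at $\omega=0$ nor the majorisation machinery, and it would hold verbatim for any two transfer matrices related by a constant nonsingular left factor. What the paper's route buys is consistency with the rest of \S\ref{sec:Hinf} — the same $\lambda_{\rm{max}}(Z)$ and $\lambda_{\rm{max}}(V)$ objects reappear in Theorems~\ref{thm:hinfexpression} and~\ref{thm:hinfpi} and in the spanning-tree bounds of Theorem~\ref{thm:bounds2}, so the squared-norm eigenvalue identities are reused directly there — but for this particular theorem your argument is the shorter and cleaner one.
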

\begin{proof}
We aim to find bounds on $\|\Tilde{\Sigma}_{\tau}\|_{\infty}^{2}=\lambda_{\rm{max}}(Z)$ as a function of $\|\Pi_{\tau}\|_{\infty}^{2}=\lambda_{\rm{max}}(V)$, where $V=\sigma^{2}_{w}X+\sigma_{v}^{2}Y$,
$Z$ is defined in \eqref{eq:Z}, and $X$ and $Y$ are defined in \eqref{eq:X} and \eqref{eq:Y}, respectively. Since $V=W^{\frac{1}{2}}ZW^{\frac{1}{2}}$, it follows from the upper bound in \eqref{eq:bounds} that
\begin{align*}
\lambda_{\rm{max}}(W^{\frac{1}{2}}ZW^{\frac{1}{2}})&\leq\lambda_{\rm{max}}(W^{\frac{1}{2}}Z)\lambda_{\rm{max}}(W^{\frac{1}{2}})\\&\leq \lambda_{\rm{max}}(Z)\lambda_{\rm{max}}(W^{\frac{1}{2}})^{2},
\end{align*}
from which we obtain $\lambda_{\rm{max}}(Z) \geq\lambda_{\rm{max}}(V)/\lambda_{\rm{max}}(W^{\frac{1}{2}})^{2}$. Moreover, from the lower bound in \eqref{eq:bounds}, it follows that
\begin{align*}
\lambda_{\rm{max}}(W^{\frac{1}{2}}ZW^{\frac{1}{2}}) &\geq \lambda_{\rm{max}}(W^{\frac{1}{2}}Z)\lambda_{\rm{min}}(W^{\frac{1}{2}})\\
&\geq  \lambda_{\rm{max}}(ZW^{\frac{1}{2}})\lambda_{\rm{min}}(W^{\frac{1}{2}})\\
 &\geq \lambda_{\rm{max}}(Z)\lambda_{\rm{min}}(W^{\frac{1}{2}})^{2}.
\end{align*}
Hence, we get that $\lambda_{\rm{max}}(Z)\leq\lambda_{\rm{max}}(V)/\lambda_{\rm{min}}(W^{\frac{1}{2}})^{2}$.
\end{proof}
We note that the ratio of the upper bound to the lower bound on $\|\Tilde{\Sigma}_{\tau}\|_{\infty}$, obtained in Theorem \ref{thm:bounds1}, is a function of the largest and smallest edge weights only. Namely, this ratio is equal to
\begin{equation}
\label{eq:ratio}
\eta=\frac{\lambda_{\rm{max}}(W^{\frac{1}{2}})}{\lambda_{\rm{min}}(W^{\frac{1}{2}})}.
\end{equation}
Therefore, when all edge weights are equal to each other, $\|\Tilde{\Sigma}_{\tau}\|_{\infty}$ is equal to both its lower and upper bounds. In this case, the expression for $\|\Tilde{\Sigma}_{\tau}\|_{\infty}$ is given in Corollary \ref{crly:HinfWithoutWeights}. In addition to allowing for $W=\rho I$, where $\rho$ may be different from $1$, a novel contribution of Corollary \ref{crly:HinfWithoutWeights} is that the expression derived therein holds for arbitrary node time scales. Moreover, if all the edge weights and time scales are equal to unity, i.e., $W=I$ and $E=I$, then the expression for $\|\Tilde{\Sigma}_{\tau}\|_{\infty}^{2}$ given in \cite[Theorem 3.7]{Zelazo2011a} is retrieved. To tighten the bounds on the $\Hinf$-norm of $\Tilde{\Sigma}_{\tau}$ in the case $W\neq \rho I$, we propose a heuristic in the optimization setup of \S\ref{sec:optimization}, by which we impose a bound on the upper to lower bound ratio $\eta$ defined in \eqref{eq:ratio}.

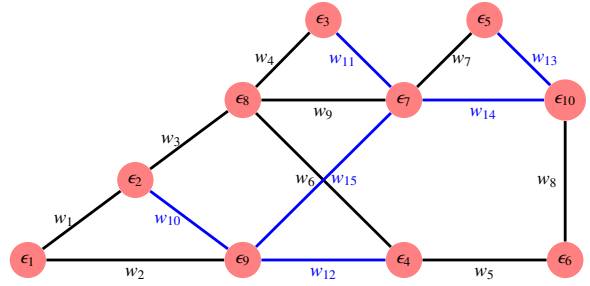
\begin{figure}
\centering
\resizebox{\columnwidth}{!}{
\begin{tikzpicture}
    \node[shape=circle,fill=red!50] (B) at (0,0) {$\epsilon_{2}$};
    \node[shape=circle,fill=red!50] (H) at (2,1.5) {$\epsilon_{8}$};
    \node[shape=circle,fill=red!50] (G) at (5,1.5) {$\epsilon_{7}$};
    \node[shape=circle,fill=red!50] (J) at (8,1.5) {$\epsilon_{10}$};
    \node[shape=circle,fill=red!50] (C) at (3.5,3) {$\epsilon_{3}$};
    \node[shape=circle,fill=red!50] (E) at (6.5,3) {$\epsilon_{5}$};
    \node[shape=circle,fill=red!50] (I) at (2,-1.5) {$\epsilon_{9}$};
    \node[shape=circle,fill=red!50] (D) at (5,-1.5) {$\epsilon_{4}$};
    \node[shape=circle,fill=red!50] (F) at (8,-1.5) {$\epsilon_{6}$};
    \node[shape=circle,fill=red!50] (A) at (-2,-1.5) {$\epsilon_{1}$};

    \path [draw,line width=1.5pt,-,black!100] (A) edge node[left] {$w_{1}$} (B);
    \path [draw,line width=1.5pt,-,black!100] (A) edge node[below] {$w_{2}$} (I);
    \path [draw,line width=1.5pt,-,black!100] (B) edge node[left] {$w_{3}$} (H);
    \path [draw,line width=1.5pt,-,black!100] (C) edge node[left] {$w_{4}$} (H);
    \path [draw,line width=1.5pt,-,black!100] (D) edge node[below] {$w_{5}$} (F);
    \path [draw,line width=1.5pt,-,black!100] (D) edge node[left] {$w_{6}$} (H);
    \path [draw,line width=1.5pt,-,black!100] (E) edge node[right] {$w_{7}$} (G);
    \path [draw,line width=1.5pt,-,black!100] (F) edge node[left] {$w_{8}$} (J);
    \path [draw,line width=1.5pt,-,black!100] (G) edge node[below] {$w_{9}$} (H);
    \path [draw,line width=1.5pt,-,blue!100] (I) edge node[left] {$w_{10}$} (B);
    \path [draw,line width=1.5pt,-,blue!100] (C) edge node[left] {$w_{11}$} (G);
    \path [draw,line width=1.5pt,-,blue!100] (D) edge node[below] {$w_{12}$} (I);
    \path [draw,line width=1.5pt,-,blue!100] (E) edge node[right] {$w_{13}$} (J);
    \path [draw,line width=1.5pt,-,blue!100] (G) edge node[below] {$w_{14}$} (J);
    \path [draw,line width=1.5pt,-,blue!100] (G) edge node[right] {$w_{15}$} (I);
\end{tikzpicture}}
\caption{Graph $\G$ that consists of $10$ nodes and $15$ edges. A time scale $\epsilon_{i}$ is associated with every node $i\in\mathcal{V}$, and a weight $w_{l}$ is associated with every edge $l\in\{1,\ldots,|\mathcal{E}|\}$. The edges colored in black are the edges of the chosen spanning tree, while the edges colored in blue are the corresponding co-tree edges.}
\label{fig:graph}
\end{figure}

To illustrate the bounds obtained in Theorem \ref{thm:bounds1}, consider the graph $\G$ shown in Figure \ref{fig:graph}. This graph consists of $n=10$ nodes and was randomly generated using an edge probability of $\frac{\ln n}{n}=0.23$ \cite[Theorem 7.3]{randomgraphs}. The $\Hinf$-norm of the corresponding system $\Tilde{\Sigma}_{\tau}$ and the bounds obtained in Theorem \ref{thm:bounds1} are computed for different combinations of edge weights and time scales. The results are presented in Figure \ref{fig:bounds1}, wherein it is assumed that $\sigma_{w}=\sigma_{v}=1$. The examples in Figure \ref{fig:bounds1} are generated as follows. In example $1$, the time scale and edge weight matrices are $E=I$ and $W=I$. In examples $2-4$, $W=I$, and $E$ is varied. In examples $5-8$, $E=I$, and $W$ is varied. Finally, in examples $9-14$, both $W$ and $E$ are varied. We denote the upper and lower bounds obtained in Theorem \ref{thm:bounds1} by UB and LB, respectively. As expected from Corollary \ref{crly:HinfWithoutWeights}, the ratios UB/$\|\Tilde{\Sigma}_{\tau}\|_{\infty}=$ LB/$\|\Tilde{\Sigma}_{\tau}\|_{\infty}=1$ in examples $1-4$. On the other hand, in examples $5-14$, UB/$\|\Tilde{\Sigma}_{\tau}\|_{\infty}\geq 1$ and LB/$\|\Tilde{\Sigma}_{\tau}\|_{\infty}\leq 1$, as expected from Theorem \ref{thm:bounds1}. 

%\begin{figure*}
%    \centering
%    \includegraphics[width=1.1\columnwidth]{./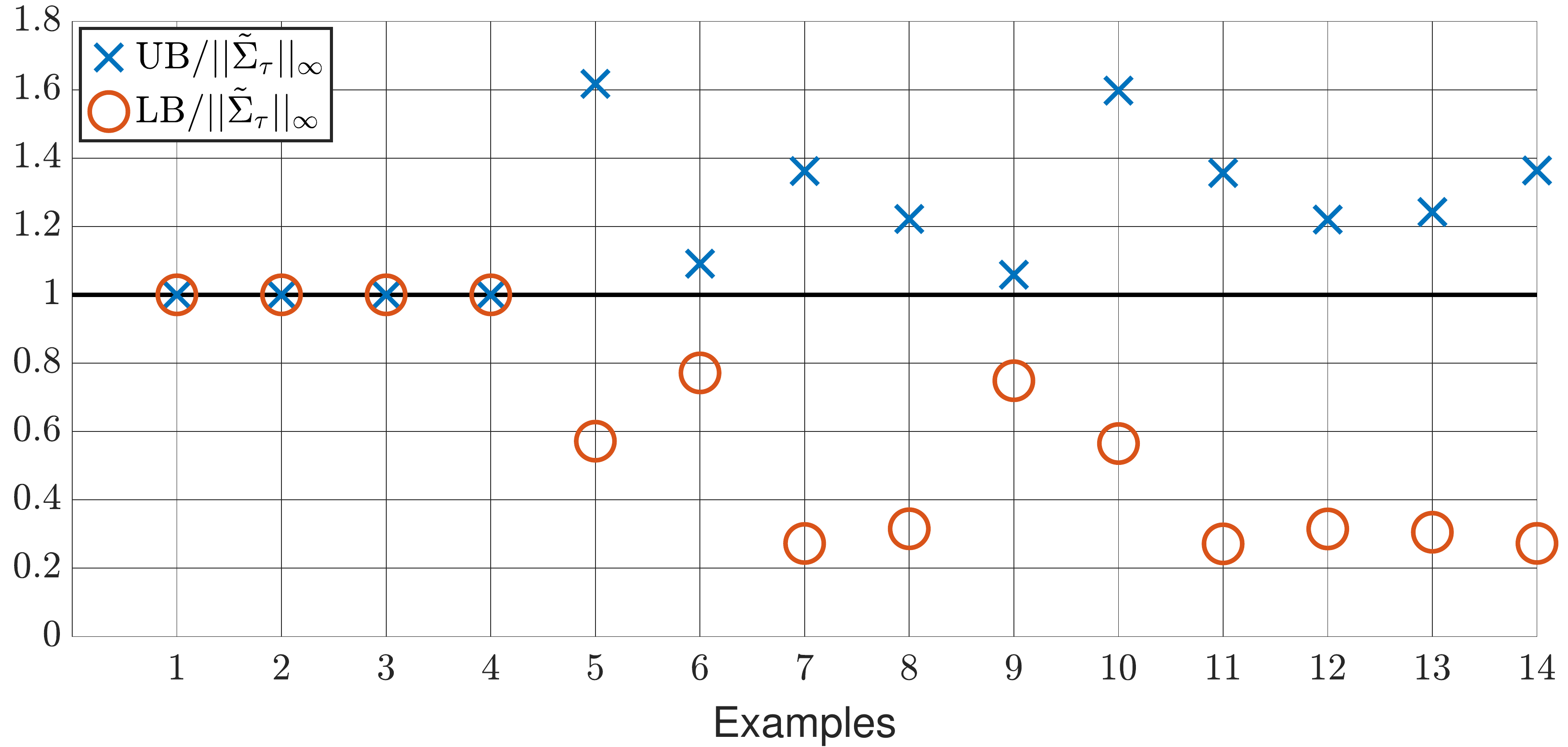}
 %   \caption{Ratios UB/$\|\Tilde{\Sigma}_{\tau}\|_{\infty}$ and LB/$\|\Tilde{\Sigma}_{\tau}\|_{\infty}$ for different edge weights and time scales combinations. UB and LB are the upper and lower bounds, respectively, obtained in Theorem \ref{thm:bounds1}.} 
  %  \label{fig:bounds1}
%\end{figure*}

\begin{figure}
    \centering
    \includegraphics[width=\columnwidth]{}
    \caption{Ratios UB/$\|\Tilde{\Sigma}_{\tau}\|_{\infty}$ and LB/$\|\Tilde{\Sigma}_{\tau}\|_{\infty}$ for different edge weights and time scales combinations. UB and LB are the upper and lower bounds, respectively, obtained in Theorem \ref{thm:bounds1}.} 
    \label{fig:bounds1}
\end{figure}

\S\ref{sec:Hinf} is concluded by deriving results for the special case when the underlying graph is a spanning tree.
\begin{crly}
The $\Hinf$-norms of the systems $\Tilde{\Sigma}_{\tau}$ and $\Pi_{\tau}$ defined in \eqref{eq:tfsigma_withcov} and \eqref{eq:systempi}, respectively, when the underlying graph is a spanning tree, are given by
\begin{align}
\|\Tilde{\Sigma}_{\tau}\|_{\infty}^{2}&=\bar{\sigma}\big(\sigma_{w}^{2}(WL_{e,s}^{\tau}W)^{-1}+\sigma_{v}^{2}W^{-1}\big), \label{eq:hinfsigmatree}\\
\|\Pi_{\tau}\|_{\infty}^{2}&=\sigma_{w}^{2}\bar{\sigma}\big((W^{\frac{1}{2}}L_{e,s}^{\tau}W^{\frac{1}{2}})^{-1}\big)+\sigma_{v}^{2}.\label{eq:hinfpitree}
\end{align}
\end{crly}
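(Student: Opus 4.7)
The plan is to specialize the general expressions from Theorem \ref{thm:hinfexpression} and Theorem \ref{thm:hinfpi} to the case when $\G$ itself is a spanning tree, the key observation being that in this case the co-tree is empty.

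First I would note that when $\G$ is a spanning tree, we can take $\G_\tau = \G$, so $D(\G_c)$ is empty and consequently $T_\tau^c$ is empty. By the definition $R = [I \; T_\tau^c]$ given in \S\ref{sec:preliminaries}, this reduces to $R = I$ (an $(n-1)\times(n-1)$ identity matrix). In particular, $RWR^T = W$ and $R^T(\,\cdot\,)R$ reduces to the matrix itself.

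Next I would substitute $R = I$ into the expression for $Z$ from equation \eqref{eq:Z}, which immediately gives
\begin{equation*}
Z = \sigma_w^2 (W L_{e,s}^\tau W)^{-1} + \sigma_v^2 W^{-1}.
\end{equation*}
Applying Theorem \ref{thm:hinfexpression}, which states $\|\Tilde{\Sigma}_\tau\|_\infty^2 = \bar{\sigma}(Z)$, then yields \eqref{eq:hinfsigmatree}. Similarly, substituting $R = I$ into the definition of $X$ in \eqref{eq:X} gives
\begin{equation*}
X = W^{1/2}(W L_{e,s}^\tau W)^{-1} W^{1/2} = (W^{1/2} L_{e,s}^\tau W^{1/2})^{-1},
\end{equation*}
where the last equality follows because $W$ and $W^{1/2}$ commute and are invertible (since $W \succ 0$). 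Invoking Theorem \ref{thm:hinfpi}, which gives $\|\Pi_\tau\|_\infty^2 = \sigma_w^2 \bar\sigma(X) + \sigma_v^2$, produces \eqref{eq:hinfpitree}.

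There is no real obstacle here; the entire content of the corollary is the observation $R = I$ for a spanning tree, after which the two claims are immediate substitutions into results already established. The only minor subtlety is justifying that the matrix inversions appearing are well-defined, which follows from $W \succ 0$ together with $L_{e,s}^\tau \succ 0$ (the latter being noted immediately after Lemma \ref{lemma:similarity}), so that both $W L_{e,s}^\tau W$ and $W^{1/2} L_{e,s}^\tau W^{1/2}$ are positive definite and hence invertible.
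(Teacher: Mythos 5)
Your proposal is correct and follows the same route as the paper's proof, which likewise obtains both formulas by substituting $R=I$ into the expressions of Theorems \ref{thm:hinfexpression} and \ref{thm:hinfpi}. Your additional remarks justifying $R=I$ via the empty co-tree and the invertibility of $WL_{e,s}^{\tau}W$ and $W^{\frac{1}{2}}L_{e,s}^{\tau}W^{\frac{1}{2}}$ are consistent with facts already established in the paper.
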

\begin{proof}
\eqref{eq:hinfsigmatree} and \eqref{eq:hinfpitree} are obtained by simply substituting $R=I$ in the $\Hinf$-norm expressions in Theorems \ref{thm:hinfexpression} and \ref{thm:hinfpi}, respectively.
\end{proof}
For the case of spanning trees, we find bounds on the $\Hinf$-norm of $\Tilde{\Sigma}_{\tau}$ that only require the knowledge of the minimum eigenvalue of $L_{e,s}^{\tau}$ and the largest and smallest edge weights. 

\begin{thm}
\label{thm:bounds2}
When the underlying graph is a spanning tree, the $\Hinf$-norm of the system $\Tilde{\Sigma}_{\tau}$ defined in \eqref{eq:tfsigma_withcov} satisfies
$L\leq\|\Tilde{\Sigma}_{\tau}\|_{\infty}^{2}\leq U$, where $L=\max\{L_1,L_2\}$,
\begin{align*}
L_1&=\frac{\sigma_{w}^{2}+\sigma_{v}^{2}\lambda_{\rm{min}}(L_{e,s}^{\tau})\lambda_{\rm{max}}(W^{\frac{1}{2}})^{2}}{\lambda_{\rm{min}}(L_{e,s}^{\tau})\lambda_{\rm{max}}(W^{\frac{1}{2}})^{4}},\\
L_2&=\frac{\sigma_{w}^{2}+\sigma_{v}^{2}\lambda_{\rm{max}}(L_{e,s}^{\tau})\lambda_{\rm{max}}(W^{\frac{1}{2}})\lambda_{\rm{min}}(W^{\frac{1}{2}})}{\lambda_{\rm{max}}(L_{e,s}^{\tau})\lambda_{\rm{max}}(W^{\frac{1}{2}})^{3}\lambda_{\rm{min}}(W^{\frac{1}{2}})},\\
U&=\frac{\sigma_{w}^{2}+\sigma_{v}^{2}\lambda_{\rm{min}}(L_{e,s}^{\tau})\lambda_{\rm{min}}(W^{\frac{1}{2}})^{2}}{\lambda_{\rm{min}}(L_{e,s}^{\tau})\lambda_{\rm{min}}(W^{\frac{1}{2}})^{4}}.
\end{align*}
\end{thm}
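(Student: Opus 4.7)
The plan is to start from the exact identity $\|\Tilde{\Sigma}_{\tau}\|_{\infty}^{2}=\lambda_{\max}(M)$ with $M:=\sigma_{w}^{2}(WL_{e,s}^{\tau}W)^{-1}+\sigma_{v}^{2}W^{-1}$, supplied by the preceding corollary for the spanning-tree case, and to sandwich $\lambda_{\max}(M)$ between the three asserted expressions by combining Weyl's inequality with the product-eigenvalue bound \eqref{eq:bounds} (and its $\lambda_{\min}$ companion furnished by Lemma \ref{lemma:eigproperty}). A preliminary step I would use throughout is the similarity $\lambda_{\min}(WL_{e,s}^{\tau}W)=\lambda_{\min}(L_{e,s}^{\tau}W^{2})$, which puts the inner matrix in the form of a product of two symmetric PSD factors so that \eqref{eq:bounds} (and its dual) applies.

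For the upper bound $U$, I would first apply Weyl's inequality $\lambda_{\max}(A+B)\le\lambda_{\max}(A)+\lambda_{\max}(B)$ to split $M$. The second summand is exact, $\lambda_{\max}(W^{-1})=1/\lambda_{\min}(W^{\frac{1}{2}})^{2}$. For the first summand, the required lower bound $\lambda_{\min}(WL_{e,s}^{\tau}W)\ge\lambda_{\min}(L_{e,s}^{\tau})\lambda_{\min}(W^{\frac{1}{2}})^{4}$ follows either from the $\lambda_{\min}$ analogue of \eqref{eq:bounds} applied to $L_{e,s}^{\tau}W^{2}$, or from a Rayleigh-quotient substitution $y=Wx$ together with the elementary estimates $y^{T}L_{e,s}^{\tau}y\ge\lambda_{\min}(L_{e,s}^{\tau})\|y\|^{2}$ and $y^{T}W^{-2}y\le\|y\|^{2}/\lambda_{\min}(W^{\frac{1}{2}})^{4}$. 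Substituting and clearing denominators reproduces $U$ exactly.

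For the two lower bounds, I would use the companion Weyl inequality $\lambda_{\max}(A+B)\ge\lambda_{\max}(A)+\lambda_{\min}(B)$, which immediately gives the common second summand $\sigma_{v}^{2}\lambda_{\min}(W^{-1})=\sigma_{v}^{2}/\lambda_{\max}(W^{\frac{1}{2}})^{2}$ appearing in both $L_{1}$ and $L_{2}$. The two distinct first summands then come from two different upper bounds on $\lambda_{\min}(L_{e,s}^{\tau}W^{2})$ produced by swapping the roles of the two PSD factors in the inequality $\lambda_{\min}(MN)\le\lambda_{\min}(M)\lambda_{\max}(N)$: taking $M=L_{e,s}^{\tau}$, $N=W^{2}$ yields $\lambda_{\min}(L_{e,s}^{\tau}W^{2})\le\lambda_{\min}(L_{e,s}^{\tau})\lambda_{\max}(W^{\frac{1}{2}})^{4}$, which upon substitution reproduces $L_{1}$; taking $M=W^{2}$, $N=L_{e,s}^{\tau}$ yields $\lambda_{\min}(L_{e,s}^{\tau}W^{2})\le\lambda_{\min}(W^{\frac{1}{2}})^{4}\lambda_{\max}(L_{e,s}^{\tau})$, which after the elementary weakening $\lambda_{\min}(W^{\frac{1}{2}})^{4}\le\lambda_{\max}(W^{\frac{1}{2}})^{3}\lambda_{\min}(W^{\frac{1}{2}})$ (a trivial consequence of $\lambda_{\min}(W^{\frac{1}{2}})\le\lambda_{\max}(W^{\frac{1}{2}})$) recovers the denominator in $L_{2}$. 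Taking $L=\max\{L_{1},L_{2}\}$ closes the argument.

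The main obstacle is that \eqref{eq:bounds} as stated in the paper only controls $\lambda_{\max}$ of a product directly, whereas all three bounds really depend on $\lambda_{\min}(WL_{e,s}^{\tau}W)$. I would handle this by invoking the dual majorization statement available through Lemma \ref{lemma:eigproperty} (by applying its second majorization at $k=|\mathcal{E}|-1$), or equivalently by rewriting $\lambda_{\min}(WL_{e,s}^{\tau}W)=1/\lambda_{\max}((L_{e,s}^{\tau})^{-1}W^{-2})$ so that the two halves of \eqref{eq:bounds} apply to the inverse product with the two choices of which factor is assigned to $M$. Identifying these two orderings, which are what separates $L_{1}$ from $L_{2}$, is the only genuinely nontrivial step; once both upper estimates on $\lambda_{\min}(WL_{e,s}^{\tau}W)$ are in hand, the remaining algebra is routine.
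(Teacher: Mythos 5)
Your proof is correct, but it takes a genuinely different route from the paper's. The paper never touches \eqref{eq:hinfsigmatree} in this proof: it bounds $\|\Pi_{\tau}\|_{\infty}^{2}$ from \eqref{eq:hinfpitree} by applying \eqref{eq:bounds} to $\lambda_{\rm{max}}\big(W^{-\frac{1}{2}}(L_{e,s}^{\tau})^{-1}W^{-\frac{1}{2}}\big)$ (so only $\lambda_{\rm{max}}$ of a product of PSD factors is ever needed), and then converts back to $\|\Tilde{\Sigma}_{\tau}\|_{\infty}^{2}$ by dividing through by $\lambda_{\rm{max}}(W^{\frac{1}{2}})^{2}$ or $\lambda_{\rm{min}}(W^{\frac{1}{2}})^{2}$ via Theorem \ref{thm:bounds1}. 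You instead work directly on the exact expression $\lambda_{\rm{max}}\big(\sigma_{w}^{2}(WL_{e,s}^{\tau}W)^{-1}+\sigma_{v}^{2}W^{-1}\big)$, split it with the two Weyl inequalities, and control $\lambda_{\rm{min}}(WL_{e,s}^{\tau}W)$ from both sides; the dual estimate $\lambda_{\rm{min}}(MN)\leq\lambda_{\rm{min}}(M)\lambda_{\rm{max}}(N)$ that you need does follow from the second majorization in Lemma \ref{lemma:eigproperty} at $k=c-1$ (or from the elementary Rayleigh-quotient argument you sketch), and all the algebra checks out: your upper bound and your first lower bound reproduce $U$ and $L_{1}$ exactly. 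What your route buys is twofold: it is self-contained (it does not route the constants through Theorem \ref{thm:bounds1}, so the $\sigma_{v}^{2}$ terms come out exact rather than as rescaled copies of the $\Pi_{\tau}$ bound), and it exposes that $L_{2}$ is not tight as stated --- your intermediate inequality gives the strictly sharper lower bound
\begin{equation*}
\frac{\sigma_{w}^{2}}{\lambda_{\rm{max}}(L_{e,s}^{\tau})\,\lambda_{\rm{min}}(W^{\frac{1}{2}})^{4}}+\frac{\sigma_{v}^{2}}{\lambda_{\rm{max}}(W^{\frac{1}{2}})^{2}}\;\geq\;L_{2},
\end{equation*}
with strict improvement whenever $W\neq\rho I$, and you only weaken it to match the published constant. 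What the paper's route buys, conversely, is narrative economy: it reuses the $\Pi_{\tau}$ machinery and Theorem \ref{thm:bounds1} already in place, at the cost of the extra slack you noticed.
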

\begin{proof}
We first find bounds on $\|\Pi_{\tau}\|_{\infty}^{2}$ obtained in \eqref{eq:hinfpitree} using \eqref{eq:bounds}. Namely, an approach similar to the one used in the proof of Theorem \ref{thm:bounds1} is followed to find bounds on $\lambda_{\rm{max}}\big((W^{\frac{1}{2}}L_{e,s}^{\tau}W^{\frac{1}{2}})^{-1}\big)=\lambda_{\rm{max}}\big(W^{-\frac{1}{2}}(L_{e,s}^{\tau})^{-1}W^{-\frac{1}{2}}\big)$. We obtain the following two alternative lower bound expressions:
\begin{subequations}
\begin{align}
\begin{split}
&\lambda_{\rm{max}}\big((L_{e,s}^{\tau})^{-1}\big)\lambda_{\text{\rm{min}}}(W^{-\frac{1}{2}})^{2}\\&=\big(\lambda_{\rm{min}}(L_{e,s}^{\tau})\lambda_{\rm{max}}(W^{\frac{1}{2}})^{2}\big)^{-1},
\end{split}\label{subeqn:oldLB1}\\
\begin{split}
&\lambda_{\rm{min}}\big((L_{e,s}^{\tau})^{-1}\big)\lambda_{\text{\rm{min}}}(W^{-\frac{1}{2}})\lambda_{\text{\rm{max}}}(W^{-\frac{1}{2}})\\&=\big(\lambda_{\rm{max}}(L_{e,s}^{\tau})\lambda_{\rm{max}}(W^{\frac{1}{2}})\lambda_{\rm{min}}(W^{\frac{1}{2}})\big)^{-1}
\end{split}\label{subeqn:newLB1}.
\end{align}
\end{subequations}
Moreover, the upper bound expression is given by
\begin{equation}
\lambda_{\rm{max}}\big((L_{e,s}^{\tau})^{-1}\big)\lambda_{\rm{max}}(W^{-\frac{1}{2}})^{2}=\big(\lambda_{\rm{min}}(L_{e,s}^{\tau})\lambda_{\rm{min}}(W^{\frac{1}{2}})^{2}\big)^{-1}.\label{eqn:UB}
\end{equation}
In \eqref{subeqn:oldLB1}, \eqref{subeqn:newLB1}, and \eqref{eqn:UB}, the rightmost expressions follow from the fact that $\lambda_{\rm{max}}(P^{-1})=1/\lambda_{\rm{min}}(P)$ for any $P\succ0$.
Thus, it follows from \eqref{subeqn:oldLB1}, \eqref{subeqn:newLB1}, and \eqref{eqn:UB} that the $\Hinf$-norm of the system $\Pi_{\tau}$ satisfies 
\begin{align*}
\|\Pi_{\tau}\|_{\infty}^{2}& \geq \max\left\{\frac{\sigma_{w}^{2}}{\lambda_{\rm{min}}(L_{e,s}^{\tau})\lambda_{\rm{max}}(W^{\frac{1}{2}})^{2}}\right.\\
&\left.+\sigma_{v}^{2},\frac{\sigma_{w}^{2}}{\lambda_{\rm{max}}(L_{e,s}^{\tau})\lambda_{\rm{max}}(W^{\frac{1}{2}})\lambda_{\rm{min}}(W^{\frac{1}{2}})}+\sigma_{v}^{2}\right\},\\
\|\Pi_{\tau}\|_{\infty}^{2}& \leq
 \frac{\sigma_{w}^{2}}{\lambda_{\rm{min}}(L_{e,s}^{\tau})\lambda_{\rm{min}}(W^{\frac{1}{2}})^{2}}+\sigma_{v}^{2}.
\end{align*}
From Theorem \ref{thm:bounds1}, we have that
\begin{equation*}
\frac{\|\Pi_{\tau}\|_{\infty}^{2}}{\lambda_{\rm{max}}(W^{\frac{1}{2}})^{2}}\leq\|\Tilde{\Sigma}_{\tau}\|_{\infty}^{2}\leq \frac{\|\Pi_{\tau}\|_{\infty}^{2}}{\lambda_{\rm{min}}(W^{\frac{1}{2}})^{2}},
\end{equation*}
from which $\max\{L_1,L_2\} \leq \|\Tilde{\Sigma}_{\tau}\|_{\infty}^{2}\leq U$ is obtained by replacing $\|\Pi_{\tau}\|_{\infty}^{2}$ on the right-hand-side of the above inequality by its upper bound and on the left-hand-side by its lower bound. 
\end{proof}
Further simplifications can be performed on spanning tree subgraphs having equal edge weights.
\begin{crly}
\label{crly:equalbounds}
Consider systems $\Tilde{\Sigma}_{\tau}$ and $\|\Pi_{\tau}\|$ defined in \eqref{eq:tfsigma_withcov} and \eqref{eq:systempi}, respectively. Assuming that the underlying graph is a spanning tree and has equal edge weights, i.e., $W=\rho I$ for some $\rho>0$, it follows that
\begin{equation*}
\|\Tilde{\Sigma}_{\tau}\|_{\infty}^{2}=\frac{1}{\rho}\|\Pi_{\tau}\|_{\infty}^{2}=\frac{1}{\rho^2}\sigma_{w}^{2}\bar{\sigma}\big((L_{e,s}^{\tau})^{-1}\big)+\frac{1}{\rho}\sigma_{v}^{2}=L=U,
\end{equation*}
where $L$ and $U$ are defined in Theorem \ref{thm:bounds2}.
\end{crly}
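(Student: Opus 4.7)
My plan is to chain together results already in hand: Corollary \ref{crly:HinfWithoutWeights} (or its underlying observation), equation \eqref{eq:hinfpitree} from the preceding corollary, and then a direct substitution into the bound expressions $L_1$, $L_2$, $U$ of Theorem \ref{thm:bounds2}. Nothing new has to be built; the content is entirely verification that all four expressions collapse to the same number when $W = \rho I$.

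First, I would establish the leftmost equality $\|\tilde{\Sigma}_\tau\|_\infty^2 = \tfrac{1}{\rho}\|\Pi_\tau\|_\infty^2$. Since $W^{1/2} = \sqrt{\rho}\,I$, the definition \eqref{eq:systempi} gives $\Pi_\tau(s) = \sqrt{\rho}\,\tilde{\Sigma}_\tau(s)$, so the $\mathcal{H}_\infty$-norms are related by a factor of $\sqrt{\rho}$, as already used in the proof of Corollary \ref{crly:HinfWithoutWeights}. Next I would apply the spanning-tree formula \eqref{eq:hinfpitree} with $W = \rho I$: the factor $W^{1/2} L_{e,s}^{\tau} W^{1/2}$ becomes $\rho L_{e,s}^{\tau}$, whose inverse has largest eigenvalue $\tfrac{1}{\rho}\bar{\sigma}((L_{e,s}^{\tau})^{-1})$. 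Combining these two steps yields
\begin{equation*}
\|\tilde{\Sigma}_\tau\|_\infty^2 \;=\; \frac{\sigma_w^2}{\rho^2}\,\bar{\sigma}\bigl((L_{e,s}^{\tau})^{-1}\bigr) + \frac{\sigma_v^2}{\rho},
\end{equation*}
which is the middle equality in the statement.

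Finally, to check $L = U = \tfrac{1}{\rho^2}\sigma_w^2 \bar{\sigma}((L_{e,s}^{\tau})^{-1}) + \tfrac{1}{\rho}\sigma_v^2$, I substitute $\lambda_{\max}(W^{1/2}) = \lambda_{\min}(W^{1/2}) = \sqrt{\rho}$ into the expressions of Theorem \ref{thm:bounds2}. A direct computation shows
\begin{equation*}
L_1 \;=\; U \;=\; \frac{\sigma_w^2}{\rho^2 \lambda_{\min}(L_{e,s}^{\tau})} + \frac{\sigma_v^2}{\rho}, \qquad L_2 \;=\; \frac{\sigma_w^2}{\rho^2 \lambda_{\max}(L_{e,s}^{\tau})} + \frac{\sigma_v^2}{\rho},
\end{equation*}
and since $\lambda_{\min}(L_{e,s}^{\tau}) \le \lambda_{\max}(L_{e,s}^{\tau})$ we have $L_2 \le L_1$, so $L = \max\{L_1,L_2\} = L_1 = U$. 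Using $\bar{\sigma}((L_{e,s}^{\tau})^{-1}) = 1/\lambda_{\min}(L_{e,s}^{\tau})$, this common value matches the closed-form expression computed above.

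There is no real obstacle here; the only thing to be careful about is that the $L_2$ branch of the lower bound is indeed dominated by $L_1$ in the equal-weights regime, which is immediate from the monotonicity of $\lambda_{\min}$ versus $\lambda_{\max}$. The result then follows purely by substitution and collecting terms.
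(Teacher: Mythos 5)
Your proposal is correct and follows essentially the same route as the paper: substitute $W=\rho I$ into the spanning-tree norm expressions and the bound formulas of Theorem \ref{thm:bounds2}, and observe that $L_1\geq L_2$ (since $\lambda_{\rm{min}}(L_{e,s}^{\tau})\leq\lambda_{\rm{max}}(L_{e,s}^{\tau})$) so that $L=L_1=U$. The only cosmetic difference is that you reach the middle equality via the scaling relation $\Pi_{\tau}(s)=\sqrt{\rho}\,\Tilde{\Sigma}_{\tau}(s)$ together with \eqref{eq:hinfpitree}, whereas the paper also substitutes directly into \eqref{eq:hinfsigmatree}; both are equivalent one-line computations.
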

\begin{proof}
The desired result follows immediately by substituting $W=\rho I$ in the $\Hinf$-norm expressions of $\Tilde{\Sigma}_{\tau}$ and $\Pi_{\tau}$ defined in \eqref{eq:hinfsigmatree} and \eqref{eq:hinfpitree}, respectively, and in the expressions of the bounds in Theorems \ref{thm:bounds1} and \ref{thm:bounds2}. We note that in this case $L=L_1$ since 
\begin{align*}
L_1&=\frac{1}{\rho^2}\sigma_{w}^{2}\lambda_{\rm{max}}\big((L_{e,s}^{\tau})^{-1}\big)+\frac{1}{\rho}\sigma_{v}^{2}\\
&\geq\frac{1}{\rho^2}\sigma_{w}^{2}\lambda_{\rm{min}}\big((L_{e,s}^{\tau})^{-1}\big)+\frac{1}{\rho}\sigma_{v}^{2}\\
&=L_2.    
\end{align*}
\end{proof}

Revisiting the graph shown in Figure \ref{fig:graph}, consider the spanning tree subgraph that consists of the edges colored in black. The $\Hinf$-norm of the corresponding system $\Tilde{\Sigma}_{\tau}$ and the bounds obtained in Theorems \ref{thm:bounds1} and \ref{thm:bounds2} are computed for various combinations of $W$ and $E$, similar to the combinations considered in the previous example. The results are presented in Figure \ref{fig:bounds2}, wherein it is assumed that $\sigma_{w}=\sigma_{v}=1$. We denote the upper and lower bounds obtained in Theorem \ref{thm:bounds1} by UB and LB, respectively, and the upper and lower bounds obtained in Theorem \ref{thm:bounds2} by UB1 and LB1, respectively, with LB1 $=\sqrt{L}$ and UB1 $=\sqrt{U}$. As expected from Corollary \ref{crly:equalbounds}, $\|\Tilde{\Sigma}_{\tau}\|_{\infty}=$ UB $=$ LB $=$ UB1$ =$ LB1 in examples $1-4$. On the other hand, in examples $5-14$, UB1 $\geq$ UB and LB1 $\leq$ LB,  as expected from Theorem \ref{thm:bounds2}.

%\begin{figure*}
%    \centering
%    \includegraphics[width=1.2\columnwidth]{./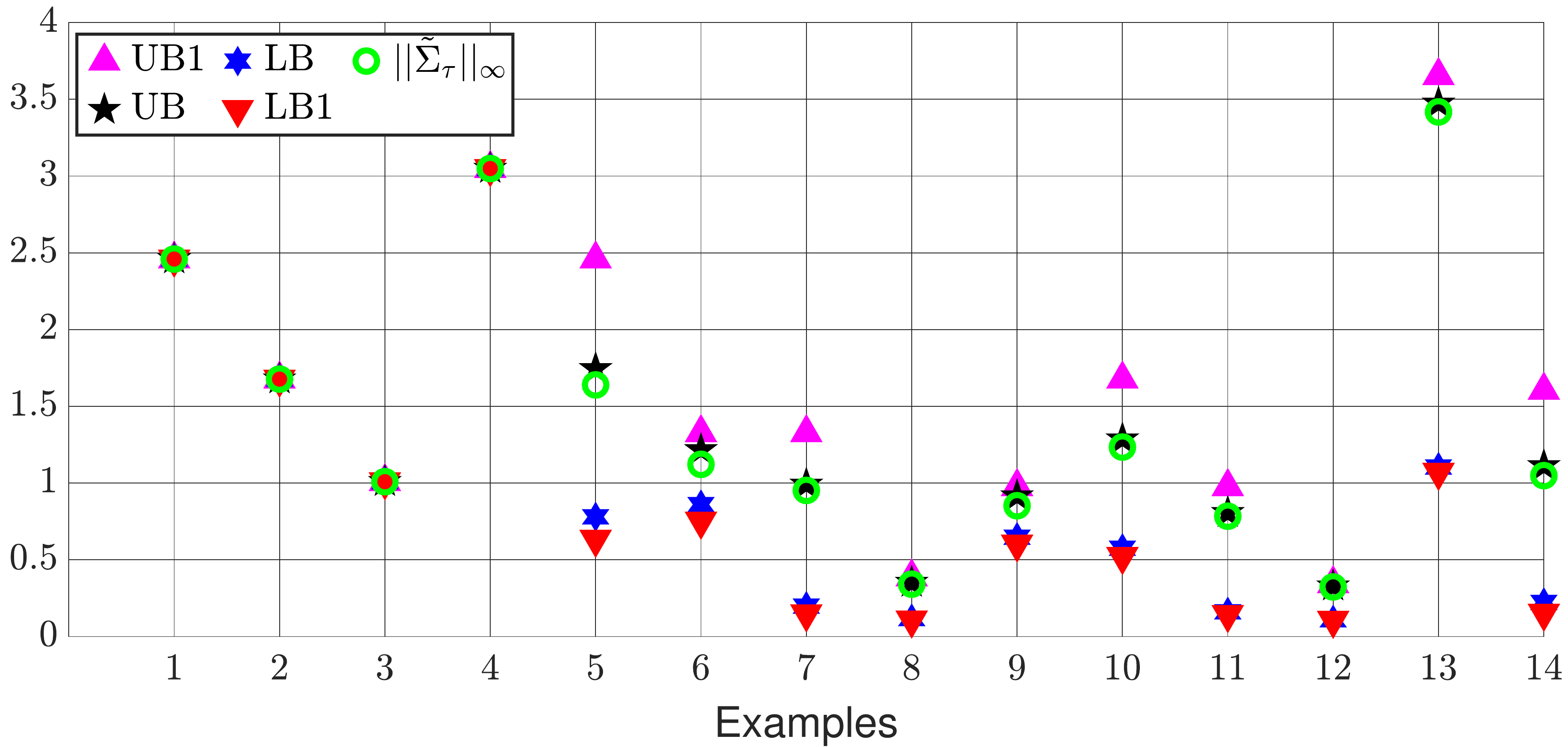}
 %   \caption{$\|\Tilde{\Sigma}_{\tau}\|_{\infty}$, UB, LB, UB1, and LB1 for different edge weights and time scales combinations. UB and LB are the upper and lower bounds, respectively, obtained in Theorem \ref{thm:bounds1}. UB1 and LB1 are the upper and lower bounds, respectively, obtained in Theorem \ref{thm:bounds2}.} 
  %  \label{fig:bounds2}
%\end{figure*}

\begin{figure}
    \centering
    \includegraphics[width=\columnwidth]{}
    \caption{$\|\Tilde{\Sigma}_{\tau}\|_{\infty}$, UB, LB, UB1, and LB1 for different edge weights and time scales combinations. UB and LB are the upper and lower bounds, respectively, obtained in Theorem \ref{thm:bounds1}. UB1 and LB1 are the upper and lower bounds, respectively, obtained in Theorem \ref{thm:bounds2}.} 
    \label{fig:bounds2}
\end{figure}

\subsection{Optimal Time Scales and Edge Weights}
\label{sec:optimization}
In this section, the expression of $\|\Pi_{\tau}\|_{\infty}^{2}$ in (\ref{eq:HinfPi}) is utilized to derive new insights on the $\Hinf$-norm minimization problem and to design an optimization problem for the selection of time scales and/or edge weights for the system $\Tilde{\Sigma}_{\tau}$ defined in (\ref{eq:tfsigma_withcov}). We define the vectors of time scales and edge weights as $\epsilon=(\epsilon_{1},\ldots,\epsilon_{n})$ and $w=(w_{1},\ldots,w_{|\mathcal{E}|})$, respectively. Thus, $E=\mathbf{diag}(\epsilon)$ and $W=\mathbf{diag}(w)$. Moreover, $\epsilon^{-1}$ and $w^{-\frac{1}{2}}$ denote element-wise operations on the vectors $\epsilon$ and $w$, respectively, i.e., $(\epsilon^{-1})_{i}=(\epsilon_i)^{-1}$ for all $i\in\mathcal{V}$ and $(w^{-\frac{1}{2}})_l=(w_l)^{-\frac{1}{2}}$ for all $l\in\{1,\ldots,|\mathcal{E}|\}$. Further assume that upper and lower bounds on the edge weights and time scales are given, i.e., $w_{\rm{min}}\leq w_l\leq w_{\rm{max}}$ and $\epsilon_{\rm{min}}\leq \epsilon_i\leq \epsilon_{\rm{max}}$, where homogeneous bounds are assumed. 
\begin{prop}
\label{prop:trivialsolution}
Consider the system $\Pi_{\tau}(E,W)$ defined in \eqref{eq:systempi} and the bounds $w_{\rm{min}}I\preceq W\preceq w_{\rm{max}}I$ and $\epsilon_{\rm{min}}I\preceq E\preceq\epsilon_{\rm{max}}I$. Then, $E=E_*=\epsilon_{\rm{min}}I$ and $W=W_*=w_{\rm{max}}I$ minimize $\|\Pi_{\tau}(E,W)\|_{\infty}$.
\end{prop}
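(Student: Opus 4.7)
The plan is to reduce the minimization of $\|\Pi_\tau\|_\infty$ to the minimization of the scalar $\lambda_{\max}(X)$ and then establish joint monotonicity in $E$ and $W$. By Theorem \ref{thm:hinfpi}, $\|\Pi_\tau(E,W)\|_\infty^2 = \sigma_w^2\bar{\sigma}(X(E,W)) + \sigma_v^2$, and since $\sigma_v^2$ does not depend on $(E,W)$, the problem reduces to minimizing $\lambda_{\max}(X(E,W))$ (recall $\bar{\sigma} = \lambda_{\max}$ on the positive semidefinite cone) over the box constraints $\epsilon_{\min}I \preceq E \preceq \epsilon_{\max}I$ and $w_{\min}I \preceq W \preceq w_{\max}I$.

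The key intermediate step is to rewrite $\lambda_{\max}(X)$ as a generalized Rayleigh quotient in which the $E$ and $W$ dependencies appear separately. Starting from the definition \eqref{eq:X} and using the cyclic invariance of the nonzero spectrum, i.e., $\lambda(AB) = \lambda(BA)$ on nonzero eigenvalues, the $W^{1/2}R^T\,(\cdot)\,RW^{1/2}$ sandwich collapses via $RW^{1/2}\cdot W^{1/2}R^T = RWR^T$, giving $\lambda_{\max}(X) = \lambda_{\max}\bigl((RWR^T)^{-1}(L_{e,s}^\tau)^{-1}\bigr)$. Since both factors are positive definite, the generalized Rayleigh quotient characterization yields
\[
\lambda_{\max}(X) = \sup_{u \neq \mathbf{0}} \frac{u^T (L_{e,s}^\tau)^{-1} u}{u^T RWR^T u},
\]
where $L_{e,s}^\tau = D_\tau^T E^{-1} D_\tau$.

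With this representation in hand, the desired monotonicities follow from the operator antitonicity of the matrix inverse on the positive definite cone. For the time scales, $E_1 \preceq E_2$ gives $E_1^{-1} \succeq E_2^{-1}$, hence $L_{e,s}^\tau(E_1) \succeq L_{e,s}^\tau(E_2)$, and a second inversion yields $(L_{e,s}^\tau(E_1))^{-1} \preceq (L_{e,s}^\tau(E_2))^{-1}$; thus the numerator shrinks pointwise in $u$, and the supremum shrinks. For the edge weights, $W_1 \preceq W_2$ yields $RW_1R^T \preceq RW_2R^T$; the denominator grows pointwise in $u$ (it is strictly positive because $R$ has full row rank and $W \succ 0$), and the supremum again shrinks. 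Therefore $\lambda_{\max}(X)$ is non-decreasing in $E$ and non-increasing in $W$, so on the box the joint minimum occurs at the smallest $E$ and the largest $W$, namely $E_* = \epsilon_{\min}I$ and $W_* = w_{\max}I$.

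The main obstacle is the reduction in the second paragraph: in the raw expression for $X$, the matrix $W$ appears three times (once as $W^{1/2}$ on each side and twice inside the inverse sandwiched around $(L_{e,s}^\tau)^{-1}$), so the dependence on $(E,W)$ is not visibly separable. The cyclic property is the identity that disentangles the two contributions and produces the Rayleigh quotient; the remaining monotonicity arguments are then routine applications of operator monotonicity of the inverse.
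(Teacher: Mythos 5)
Your proof is correct and follows essentially the same route as the paper: reduce to minimizing $\lambda_{\rm{max}}(X)$ via Theorem \ref{thm:hinfpi}, collapse the $W^{\frac{1}{2}}$-sandwich so that the $E$- and $W$-dependence separate into $(L_{e,s}^{\tau})^{-1}$ and $(RWR^{T})^{-1}$, and conclude by operator antitonicity of the inverse together with Loewner monotonicity of $\lambda_{\rm{max}}$. Your generalized Rayleigh quotient is just a unified phrasing of the two symmetric congruence forms the paper writes down, so the content is identical.
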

\begin{proof}
From the expression of $\|\Pi_{\tau}\|_{\infty}^2$ in \eqref{eq:HinfPi}, it can be seen that minimizing $\lambda_{\rm{max}}(X)$ leads to minimizing $\|\Pi_{\tau}\|_{\infty}^2$. From the expression of $X$ in \eqref{eq:X}, it follows that
\begin{align*}
\lambda_{\rm{max}}(X)&=\lambda_{\rm{max}}\big((RWR^{T})^{-\frac{1}{2}}(L_{e,s}^{\tau})^{-1}(RWR^{T})^{-\frac{1}{2}}\big)\\
&=\lambda_{\rm{max}}\big((L_{e,s}^{\tau})^{-\frac{1}{2}}(RWR^{T})^{-1}(L_{e,s}^{\tau})^{-\frac{1}{2}}\big).
\end{align*} From the fact that $0\prec E_{*}\preceq E$ for any admissible $E$, it follows that
\begin{equation*}
(L_{e,s}^{\tau})_{*}^{-1}=(D_{\tau}^{T}E_{*}^{-1}D_{\tau})^{-1}\preceq (D_{\tau}^{T}E^{-1}D_{\tau})^{-1}=(L_{e,s}^{\tau})^{-1}.
\end{equation*} Hence, for a fixed $W$, $\lambda_{\rm{max}}(X)$ is minimized for $E=E_*$.
Similarly, from $W_{*}\succeq W\succ 0$ for any admissible $W$, it follows that
\begin{equation*}
(RW_{*}R^{T})^{-1}\preceq(RWR^{T})^{-1}.
\end{equation*} Hence, for a fixed $E$, $\lambda_{\rm{max}}(X)$ is minimized for $W=W_*$.
\end{proof}
As per Proposition \ref{prop:trivialsolution}, if $E=E_*=\epsilon_{\rm{min}}I$ and $W=W_*=w_{\rm{max}}I$, then $\|\Pi_{\tau}\|_{\infty}$ is minimized, and so is $\|\Tilde{\Sigma}_{\tau}\|_{\infty}=\|\Pi_{\tau}\|_{\infty}/\sqrt{w_{\rm{max}}}$. This is a novel insight in $\Hinf$-based network optimization. Namely, to minimize the $\Hinf$-norm of the edge consensus model in \eqref{eq:tfsigma_withcov}, we operate at minimum time scales and maximum edge weights.

In addition (and in contrast) to the above, we propose the following optimization paradigm if diversity of time scales and edge weights is desirable in the particular application of interest. For generality, we allow for non-homogeneous bounds on the time scales and edge weights, i.e., $w_{{\rm{max},}l}^{-\frac{1}{2}}\leq w_{l}^{-\frac{1}{2}}\leq w_{{\rm{min},}l}^{-\frac{1}{2}}$ and $\epsilon_{{\rm{max},}i}^{-1}\leq\epsilon_{i}^{-1}\leq\epsilon_{{\rm{min},}i}^{-1}$ for all $i\in\mathcal{V}$ and $l\in\{1,\ldots,|\mathcal{E}|\}$. As per the expression of $\|\Pi_{\tau}\|_{\infty}^2$ in \eqref{eq:HinfPi}, it is desirable to minimize $\lambda_{\rm{max}}(X)$. This is done by finding the minimum $\zeta$ such that $X\preceq \zeta I$. To formulate our problem as a convex optimization problem, we minimize $\lambda_{\rm{max}}(X_{1})$ instead, where 
\begin{equation}
X_1=W^{-\frac{1}{2}}R^{\dagger}(L_{e,s}^{\tau})^{-1}(R^{\dagger})^{T}W^{-\frac{1}{2}},
\label{eq:X1}
\end{equation} 
and $\lambda_{\rm{max}}(X_{1})\geq\lambda_{\rm{max}}(X)$ as per Lemma \ref{lemma:opt}.
\begin{lem}
\label{lemma:opt}
The matrices $X$ and $X_1$ defined in \eqref{eq:X} and \eqref{eq:X1}, respectively, satisfy $\lambda_{\rm{max}}(X_{1})\geq\lambda_{\rm{max}}(X)$.
\end{lem}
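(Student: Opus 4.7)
The strategy is to reduce $\lambda_{\rm{max}}(X)$ and $\lambda_{\rm{max}}(X_1)$ to eigenvalue problems of the form $\lambda_{\rm{max}}((L_{e,s}^{\tau})^{-1} \cdot)$ acting on two $(n-1)\times(n-1)$ symmetric matrices, and then establish a L\"owner ordering between those matrices. Let $M = RWR^{T}$, $B = RR^{T}$, and $L = L_{e,s}^{\tau}$ throughout (all invertible: $M,B \succ 0$ because $R$ has full row rank and $W \succ 0$; $L \succ 0$ by Lemma~\ref{lemma:similarity}).

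First I will simplify $\lambda_{\rm{max}}(X)$. Cycling factors in
\[
X = W^{\frac{1}{2}} R^{T} M^{-1} L^{-1} M^{-1} R W^{\frac{1}{2}}
\]
and using the telescoping identity $M^{-1} R W R^{T} M^{-1} = M^{-1}$, I obtain
\[
\lambda_{\rm{max}}(X) = \lambda_{\rm{max}}(L^{-1} M^{-1}).
\]
Next, using $R^{\dagger} = R^{T} B^{-1}$ I rewrite
\[
X_{1} = W^{-\frac{1}{2}} R^{\dagger} L^{-1} (R^{\dagger})^{T} W^{-\frac{1}{2}},
\]
and cycling again yields $\lambda_{\rm{max}}(X_{1}) = \lambda_{\rm{max}}(L^{-1} N)$ where $N := (R^{\dagger})^{T} W^{-1} R^{\dagger}$ is symmetric positive semidefinite.

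The key step, and the one I expect to be the main substance of the proof, is the matrix inequality
\[
N \succeq M^{-1}.
\]
I will prove this via the variational characterisation
\[
y^{T} (RWR^{T})^{-1} y \;=\; \min_{v:\,Rv = y}\, v^{T} W^{-1} v, \qquad y \in \mathbb{R}^{n-1},
\]
obtained by a standard Lagrange-multiplier computation (the minimiser is $v_{\star} = W R^{T}(RWR^{T})^{-1}y$, giving minimum value $y^{T} M^{-1} y$). Since $R$ has full row rank, $R R^{\dagger} = I$, so the particular choice $u = R^{\dagger} y$ satisfies $R u = y$ and is therefore feasible for the minimisation. Consequently
\[
y^{T} M^{-1} y \;\leq\; u^{T} W^{-1} u \;=\; y^{T} (R^{\dagger})^{T} W^{-1} R^{\dagger}\, y \;=\; y^{T} N y,
\]
and since $y$ was arbitrary, $N \succeq M^{-1}$.

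To conclude, conjugation by $L^{-\frac{1}{2}}$ preserves the L\"owner order, so $L^{-\frac{1}{2}} N L^{-\frac{1}{2}} \succeq L^{-\frac{1}{2}} M^{-1} L^{-\frac{1}{2}}$, and hence
\[
\lambda_{\rm{max}}(X_{1}) = \lambda_{\rm{max}}(L^{-\frac{1}{2}} N L^{-\frac{1}{2}}) \;\geq\; \lambda_{\rm{max}}(L^{-\frac{1}{2}} M^{-1} L^{-\frac{1}{2}}) = \lambda_{\rm{max}}(X),
\]
which is the claim. The only non-routine ingredient is recognising that the pseudo-inverse evaluation $R^{\dagger} y$ is a suboptimal feasible point of a weighted norm-minimisation whose optimal value is exactly $y^{T} (RWR^{T})^{-1} y$; everything else is eigenvalue cycling and monotonicity of $\lambda_{\rm{max}}$ under the L\"owner order.
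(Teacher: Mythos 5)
Your argument is correct, and up to the pivotal step it follows the same path as the paper: both proofs cycle factors inside $\lambda_{\rm{max}}$ and use monotonicity of $\lambda_{\rm{max}}$ under congruence by $(L_{e,s}^{\tau})^{-\frac{1}{2}}$ to reduce the claim to the single matrix inequality $(R^{\dagger})^{T}W^{-1}R^{\dagger}\succeq (RWR^{T})^{-1}$. Where you genuinely diverge is in how that inequality is established. The paper substitutes $R^{\dagger}=R^{T}(RR^{T})^{-1}$ and applies the Schur complement formula repeatedly, ending with a block matrix that factors as $\Theta\Psi\Theta^{T}$ with $\Psi\preceq 0$; the argument is purely algebraic and stays inside the LMI toolbox on which the surrounding optimization section is built. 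You instead invoke the variational identity $y^{T}(RWR^{T})^{-1}y=\min_{v:\,Rv=y}v^{T}W^{-1}v$ and note that $v=R^{\dagger}y$ is a feasible, generally suboptimal point of this weighted least-norm problem. Both proofs are sound (your Lagrange computation and the feasibility of $R^{\dagger}y$ via $RR^{\dagger}=I$ both check out), but your route is shorter and arguably more illuminating: it exhibits the inequality as the statement that the unweighted pseudoinverse is a suboptimal right inverse of $R$ in the $W^{-1}$-metric, and it makes transparent why the relaxation is tight when $W=\rho I$ (the minimizer $WR^{T}(RWR^{T})^{-1}y$ then coincides with $R^{\dagger}y$), which connects to the paper's discussion of the ratio $\eta$ and the exactness results for equal edge weights.
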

\begin{proof}
From the expression of $X_1$, it follows that
\begin{equation*}
\lambda_{\rm{max}}(X_1)=\lambda_{\rm{max}}\big((L_{e,s}^{\tau})^{-\frac{1}{2}}R^{\dagger^{T}} W^{-1}R^\dagger(L_{e,s}^{\tau})^{-\frac{1}{2}}\big).
\end{equation*}
From the expression of $\lambda_{\rm{max}}(X)$ in the proof of Proposition \ref{prop:trivialsolution}, it follows that $\lambda_{\rm{max}}(X_{1})\geq\lambda_{\rm{max}}(X)$ is equivalent to
\begin{align*}
&\lambda_{\rm{max}}\big((L_{e,s}^{\tau})^{-\frac{1}{2}} R^{\dagger^{T}} W^{-1}R^\dagger(L_{e,s}^{\tau})^{-\frac{1}{2}}\big)\\&\geq\lambda_{\rm{max}}\big((L_{e,s}^{\tau})^{-\frac{1}{2}}(RWR^{T})^{-1}(L_{e,s}^{\tau})^{-\frac{1}{2}}\big).
\end{align*}
Proving the inequality above can be done by showing that
\begin{equation*}
(R^{\dagger})^{T}W^{-1}R^{\dagger}\succeq (RWR^{T})^{-1}.
\end{equation*}
Expressing $R^{\dagger}$ as $R^{T}(RR^{T})^{-1}$ and applying the Schur complement formula twice, the proof further simplifies to showing that
\begin{equation*}
-RWR^{T}+RR^{T}(RW^{-1}R^{T})^{-1}RR^{T}\preceq 0.
\end{equation*}
Applying the Schur complement formula, this inequality can be equivalently rewritten as
\begin{equation*}
\left[\begin{array}{cc}
-RWR^T & RR^T\\
RR^T & -RW^{-1}R^{T}
\end{array}\right]=\Theta\Psi\Theta^T\preceq 0,
\end{equation*}
where  
\begin{align*}
    \Theta=\left[\begin{array}{cc}
     R & 0\\
     0 & R
    \end{array}\right],~\Psi = 
    \left[\begin{array}{cc}
     -W & I\\
     I & -W^{-1}
    \end{array}\right].
\end{align*}
Therefore, our proof is concluded by showing that $\Psi\preceq 0$, which follows from the Schur complement formula.
\end{proof}

If $\lambda_{\rm{max}}(X_{1})$ is considered instead of $\lambda_{\rm{max}}(X)$, and given that $(L_{e,s}^{\tau})^{-1}\succ 0$, we apply the Schur complement formula to equivalently rewrite $X_{1}\preceq \zeta I$ as 
\begin{equation*}
\left[\begin{array}{cc}
\zeta I & W^{-\frac{1}{2}}R^{\dagger}\\
(W^{-\frac{1}{2}}R^{\dagger})^{T} & L_{e,s}^{\tau}
\end{array}\right]\succeq 0.
\end{equation*}
Therefore, we are able to replace the nonlinear constraint $X\preceq \zeta I$ by a linear matrix inequality (LMI) in $\zeta$, $W^{-\frac{1}{2}}$, and $E^{-1}$, where $L_{e,s}^{\tau}=D_{\tau}^{T}E^{-1}D_{\tau}$.

Moreover, to penalize small node time scales, two different methods can be used. The first method consists of adding the constraint $\sum_{i=1}^{n}\epsilon_{i}^{-1}\leq \mu$, where $\mu$ is a design parameter that ensures that the node time scales cannot be all equal to their minimum values. The second method consists of adding a regularization term $\|\epsilon^{-1}\|_{2}$ to the objective function instead. Two similar methods may be used to penalize large edge weights. The first method consists of adding the constraint $\sum_{l=1}^{|\mathcal{E}|}w_{l}^{-\frac{1}{2}}\geq \nu$, where $\nu$ is a design parameter that ensures that the edge weights cannot be all equal to their maximum values. The second method consists of adding $\|w\|_{2}$ to the objective function. However, since our decision variable is $w^{-\frac{1}{2}}$, a new variable $\xi\in\mathbb{R}^{|\mathcal{E}|}$ is introduced such that $w_{l}^{\frac{1}{2}}\leq \xi_{l}$, or equivalently, $W^{\frac{1}{2}}\preceq \Xi$, where $\Xi=\mathbf{diag}(\xi)$. Using the Schur complement formula, $W^{\frac{1}{2}}\preceq \Xi$ can be replaced by
\begin{equation*}
    \left[\begin{array}{cc}
     \Xi & I\\
     I & W^{{-}\frac{1}{2}}
    \end{array}\right]\succeq 0.
\end{equation*}
Thus, in the second method, the term $\|\xi\|_{2}$ is added to the objective function and the LMI above is added to the set of constraints.

Additionally, a heuristic based on the upper to lower bound ratio $\eta$ defined in (\ref{eq:ratio}) is proposed to tighten the bounds on the $\Hinf$-norm of $\Tilde{\Sigma}_{\tau}$. Noting that $\eta=\lambda_{\rm{max}}(W^{\frac{1}{2}})/\lambda_{\rm{min}}(W^{\frac{1}{2}})=\lambda_{\rm{max}}(W^{-\frac{1}{2}})/\lambda_{\rm{min}}(W^{-\frac{1}{2}})$ is a quasiconvex function of $W^{-\frac{1}{2}}\succ 0$, imposing an upper bound $\gamma$ on $\eta$ can be done through adding the convex constraint $\lambda_{\rm{max}}(W^{-\frac{1}{2}})-\gamma\lambda_{\rm{min}}(W^{-\frac{1}{2}})\leq 0$. 

The two different options for penalizing large edge weights and small time scales yield the formulation of four possible optimization problems. In this paper, we perform $\Hinf$-norm minimization by solving the following semidefinite program:
\renewcommand*{\arraystretch}{1.1}
\begin{align}
\tag{\mbox{$P_{1}$}}\label{opt} 
    \begin{array}{ll}
\underset{\zeta,w_{l}^{-\frac{1}{2}},\epsilon_{i}^{-1},\xi}{\text{minimize}} & \zeta+\alpha \|\xi\|_{2}+\beta  \|\epsilon^{-1}\|_{2}\\
    \text{subject~to}     & 
    \begin{bmatrix}
    \zeta I & W^{-1/2} R^\dag\\
    (R^\dag)^T W^{-1/2} & L_{e,s}^\tau
    \end{bmatrix} \succeq 0 \\
    &
    \begin{bmatrix}
    \Xi & I\\
    I & W^{-\frac{1}{2}}    
    \end{bmatrix} \succeq 0\\
    & w_{{\rm{max},}l}^{-\frac{1}{2}}\leq w_{l}^{-\frac{1}{2}}\leq w_{{\rm{min},}l}^{-\frac{1}{2}} \\
    & \epsilon_{{\rm{max},}i}^{-1}\leq\epsilon_{i}^{-1}\leq\epsilon_{{\rm{min},}i}^{-1}\\
    & 
    \lambda_{\rm{max}}(W^{-\frac{1}{2}})-\gamma\lambda_{\rm{min}}(W^{-\frac{1}{2}})\leq 0
    \end{array}
\end{align}
for all $i\in\mathcal{V}$ and $l\in\{1,\ldots,|\mathcal{E}|\}$, where $\alpha>0$ and $\beta>0$ are weights on the different components of the objective function.

%%%%%%%%%%%%%%%%%%%%%%%%%%%%%%%%%%%%%%%%%%%%%%%%%%%%%%%%%%%%%%%%%%%%%%%%%%%%%%%%
\section{FORMATION CONTROL EXAMPLE}
\label{sec:example}

This section provides an example of the improvement of disturbance rejection observed by applying the edge weights and time scales obtained from Problem~\eqref{opt} with a specific choice of $(\alpha,\beta,\gamma)$. We start by performing a Pareto optimal front analysis to determine  a suitable choice of these parameters.

\subsection{Pareto Optimal Front of Problem~\eqref{opt}}

\begin{figure}
    \centering
    \includegraphics[scale=0.4]{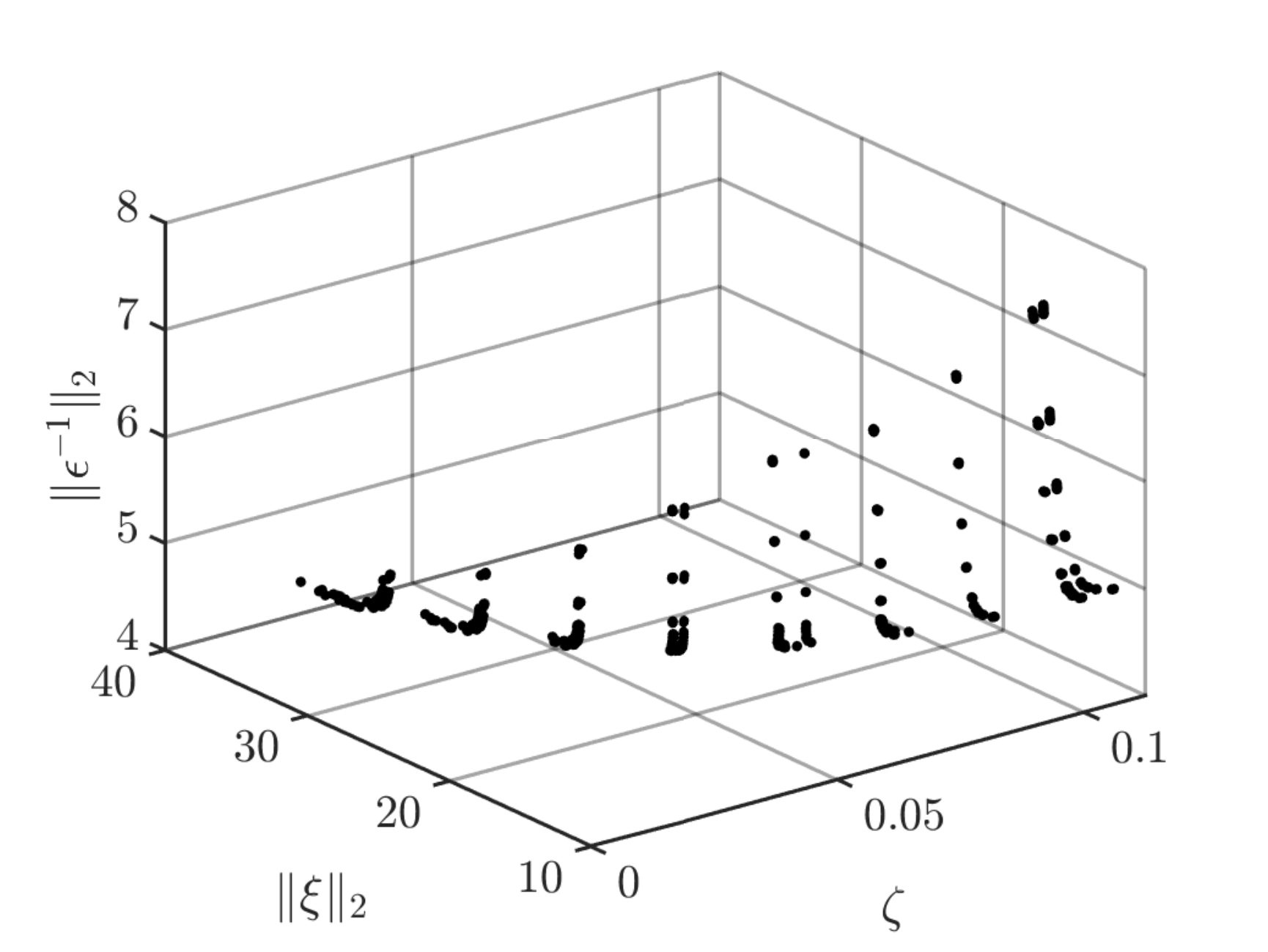}
    \includegraphics[scale=0.4]{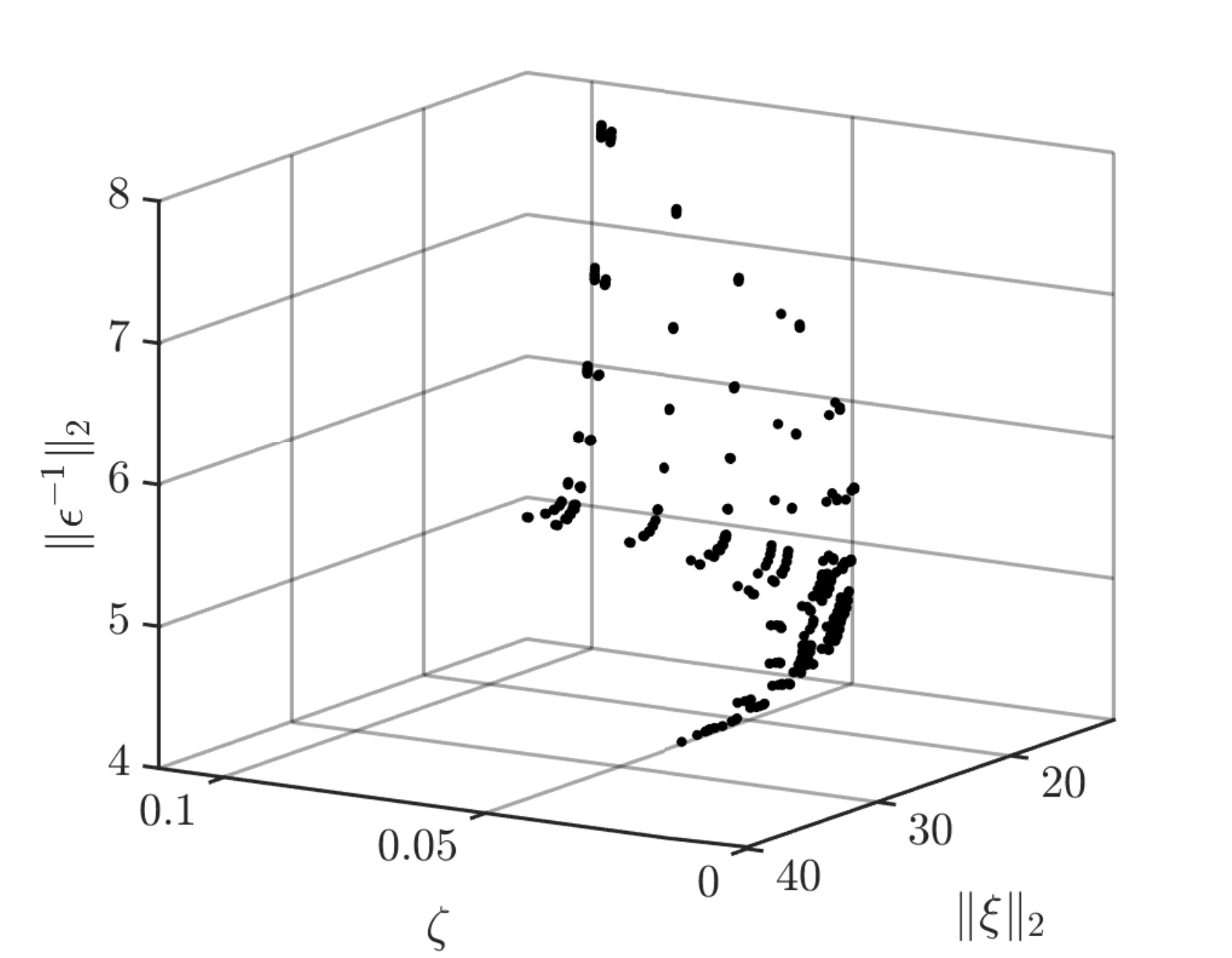}
    \includegraphics[scale=0.4]{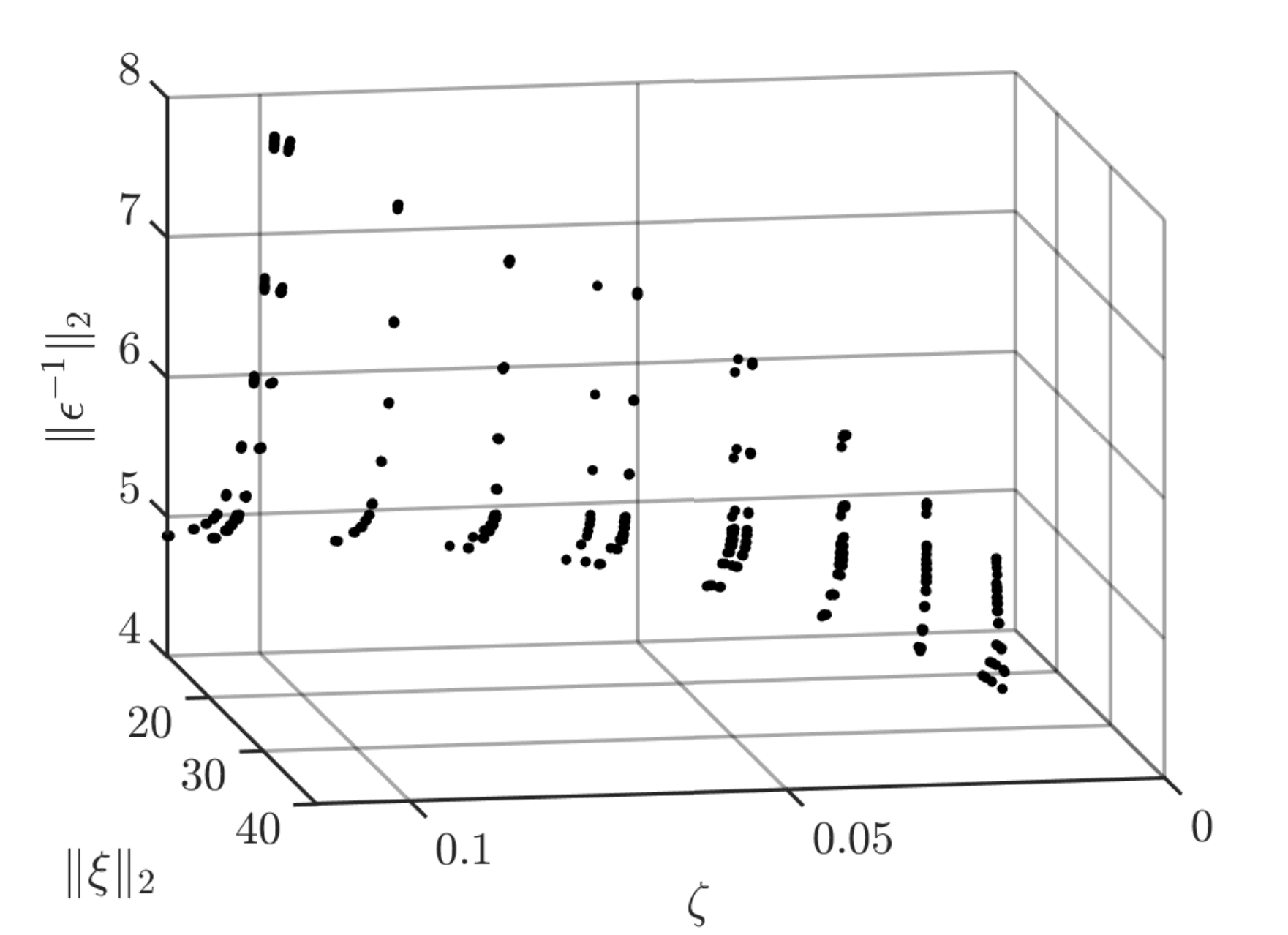}
    \caption{Pareto optimal front of Problem~\protect\eqref{opt} induced by the tuning parameters $(\alpha,\beta,\gamma)$.}
    \label{fig:pareto}
\end{figure}

A numerical experiment is performed on the graph in Figure~\ref{fig:graph} to examine the Pareto optimal front induced by the parameters $\alpha,$ $\beta,$ and $\gamma$.
The optimization problem~\eqref{opt} was solved over a logarithmic grid with $\alpha\in (10^{-3},10^{-1})$, $\beta \in (10^{-3},10^{-1})$, and $\gamma \in (10^{0.15},10^2)$.
Bounds on the edge weights are set as $w_{\min} = 10$ and $w_{\max} = 130$, and bounds on the time scales are set as either $\epsilon_{\min} = 0.1$ or $1$, and $\epsilon_{\max} = 0.5$ or $5$, depending on whether the node is a `fast' or `slow' node, as described in the example in \S\ref{sec:formcont}.

The resulting 3-dimensional surface corresponding to the Pareto optimal front and exhibiting the trade-off between the three competing objective functions $\zeta$, $\|\xi\|_{2}$, and $\|\epsilon^{-1}\|_{2}$ is shown in Figure~\ref{fig:pareto}.
A `knee' is observed at $(\alpha,\beta,\gamma) = (7.7\times 10^{-3}, 2.15\times 10^{-2},10)$.
In this setting, solving one instance of Problem~\eqref{opt} takes approximately 2.66 seconds on an Intel Core i7-9700K CPU (3.60GHz) using \texttt{YALMIP} and \texttt{SDPT3} \cite{YALMIP,SDPT3}.
A more specialized and/or optimized solver would lower this computational burden.

\subsection{Formation Control of Non-homogeneous Agents}\label{sec:formcont}

\begin{figure}
  \centering
  \includegraphics[width=\columnwidth]{./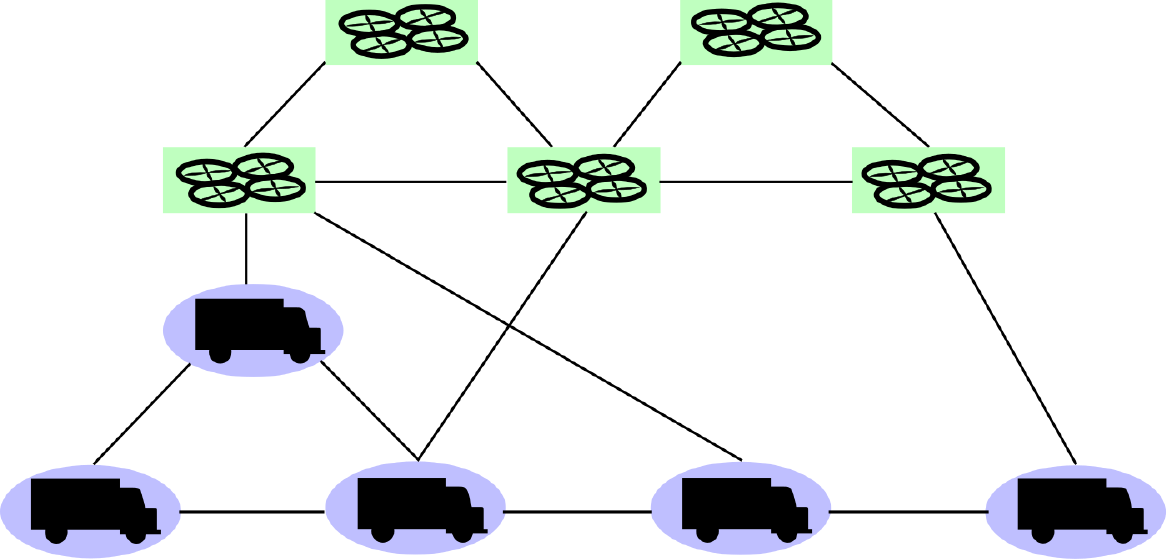}
  \caption{Network of non-homogeneous agents operating on multiple time scales. Faster aerial vehicles are shown in green squares, and slower ground vehicles are shown in blue ellipses.}
  \label{fig:formationControl}
\end{figure}

Consider a network of non-homogeneous agents, for example two groups consisting of autonomous ground and aerial vehicles, respectively, as depicted in Figure~\ref{fig:formationControl}. The underlying graph in Figure \ref{fig:formationControl} is the same as that of Figure \ref{fig:graph}, and the nodes and edges are labeled in the same way.
Denote the set of nodes corresponding to the ground vehicles as $\mathcal{V}_\text{g}$, and the set of nodes corresponding to the aerial vehicles as $\mathcal{V}_\text{a}$. 
Then, we can write $\mathcal{V}= \mathcal{V}_{\text{g}} \sqcup \mathcal{V}_{\text{a}}$.
These may represent autonomous delivery trucks coordinating with autonomous delivery drones, or aerial vehicles platooning around a ground vehicle formation.
A similar multi-time scale layered network configuration is considered for mobile sensor networks in~\cite{abuarqoub2017dynamic}.

\begin{figure*}
  \centering
  \includegraphics[width=\textwidth]{./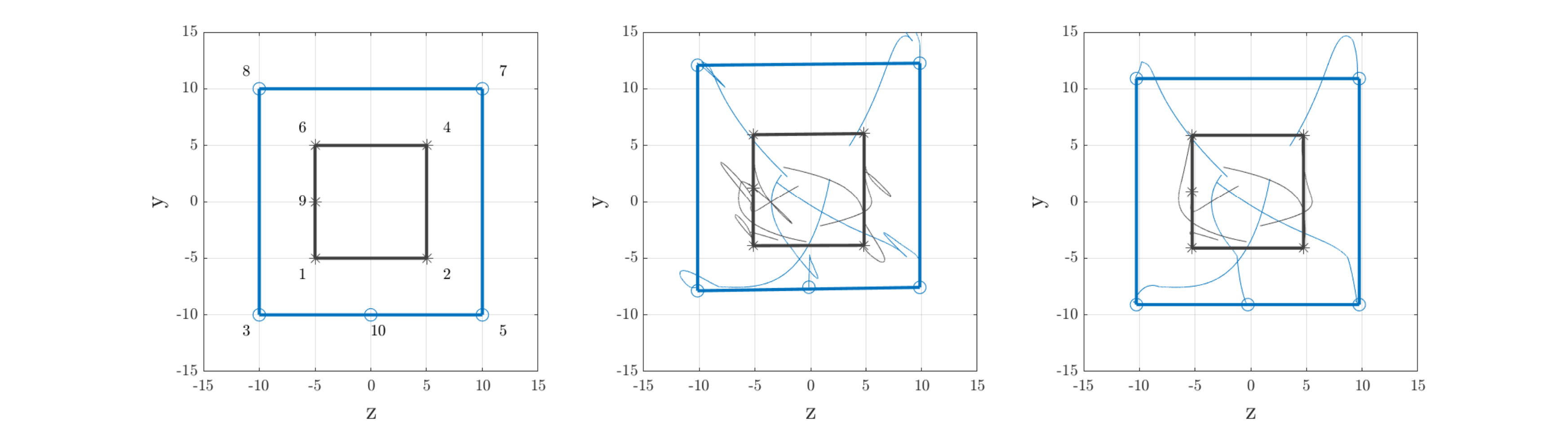}
  \caption{Target formation and agents trajectories. Left: Target formation, with node labels corresponding to Figure~\protect\ref{fig:graph}. Fast nodes are depicted on the outer square with `$\circ$' markers; slow nodes are depicted on the inner square with `$\ast$' markers. Middle: Trajectories of agents over 10s with no updates of time scales and edge weights. Right: Trajectories of agents over 10s with updates of both time scales and edge weights. Both simulations in the right and middle figures are subject to the same disturbances and initial conditions. Markers indicate the endpoints of the trajectories of the agents. Thick solid lines indicate the actual formation at the end of the 10s time frame.}
  \label{fig:formation}
\end{figure*}

Suppose that the natural dynamics of the vehicles operate on two ranges, i.e., let the time scales associated with the nodes corresponding to the (faster) aerial vehicles satisfy $\epsilon_{\min,\text{a}}  \leq \epsilon_i \leq \epsilon_{\max,\text{a}}$ for all $i\in \mathcal{V}_\text{a}$, and the time scales associated with the nodes corresponding to the (slower) ground vehicles satisfy $\epsilon_{\min,\text{g}}  \leq \epsilon_j \leq \epsilon_{\max,\text{g}}$ for all $j\in \mathcal{V}_{\text{g}}$.
Specific to our example, $\epsilon_{\min,\text{a}}=0.1$, $\epsilon_{\max,\text{a}}=0.5$, $\epsilon_{\min,\text{g}}=1$, and $\epsilon_{\max,\text{g}}=5$. 

\begin{figure}
  \centering
  \includegraphics[width=\columnwidth]{./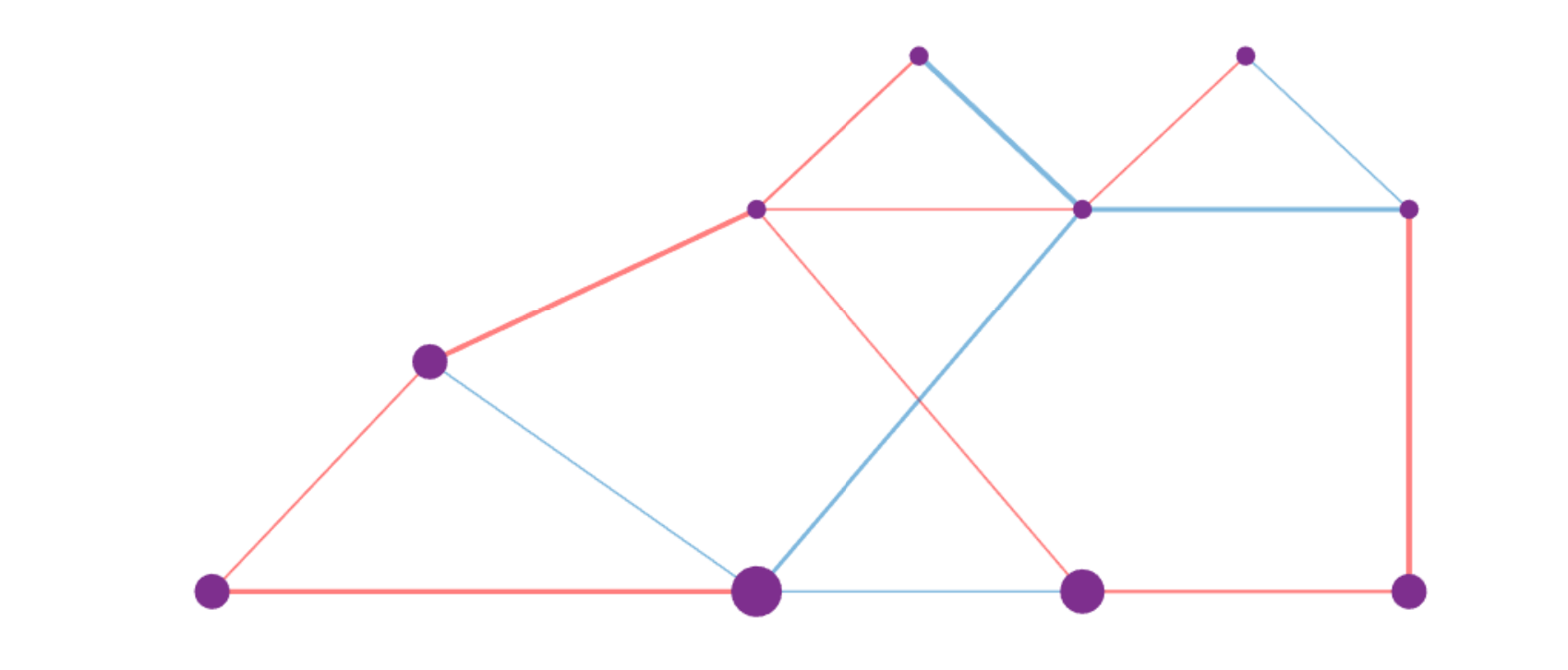}
  \caption{Optimal edge weights and time scales for the graph in Figure~\protect\ref{fig:graph} obtained from solving Problem~\protect\eqref{opt}  with $(\alpha,\beta,\gamma) = (7.7\times 10^{-3}, 2.15\times 10^{-2},10)$. Edges colored in red are the spanning tree edges. Edges colored in blue are the corresponding co-tree edges. Edge widths are proportional to the optimal weights on the edges, and node diameters are proportional to the optimal time scales on the nodes.}
  \label{fig:optTsW}
\end{figure}

Each agent $i$ in Figure~\ref{fig:formationControl} is assigned a position $(p_{z,i},p_{y,i})$ relative to a formation center in the plane, where the agents are numbered similarly as the corresponding nodes in Figure~\ref{fig:graph}. The formation is depicted in Figure~\ref{fig:formation}: slow agents are placed on the border of the inner square, and fast agents are placed on the border of the outer square.
In each direction $r\in\{z,y\}$, the agents run the one-dimensional consensus protocol:
\begin{align}
  \epsilon_i\dot{x}_{r,i} = \sum_{j\in N(i)}w_{ij} (x_j - x_i + p_{r,i} + d^e_{r,ij}) +d_{r,i}^n.
  \label{eq:consensus_example}
\end{align}
$d^{n}_{r,i}(t)$ and $d^{e}_{r,ij}(t)$ are disturbance signals on the nodes and edges, respectively, in the direction $r\in\{z,y\}$.
For all $i\in\mathcal{V}$, $d^{n}_{r,i}(t)$ is of the form
\begin{align}
  d_{r,i}^n(t) = 
  \begin{cases}
    a_{r,i} + b_{r,i} \cos\left(2\pi \frac{t-t_s}{t_f-t_s}\right) & \text{if}~t_s\leq t \leq t_f,\\
    0 & \text{otherwise},
  \end{cases}
    \label{eq:disturbance}
\end{align}
where $-3.5 \leq a_{r,i} \leq 3.5 $ and $-2.4 \leq b_{r,i}\leq 2.4$ are randomly chosen for each node and direction, and $[t_s,t_f] = [2,3]$ is the finite support of the disturbance. For all $l\in\{1,\ldots,|\mathcal{E}|\}$, $d^{e}_{r,ij}(t)$ is of the same form. The initial edge weights and time scales are set to unity, and Problem~\eqref{opt} is solved to improve the $\mathcal{H}_\infty$-norm.
We choose the parameters $(\alpha,\beta,\gamma) = (7.7\times 10^{-3}, 2.15\times 10^{-2},10)$ from the `knee' of the Pareto optimal front in Figure~\ref{fig:pareto}.
The resulting edge weights and time scales are depicted in Figure~\ref{fig:optTsW}.

Starting from randomly seeded initial positions, the consensus protocol~\eqref{eq:consensus_example} is simulated for $10$ seconds with the disturbances in~\eqref{eq:disturbance}. Two scenarios are considered. In one scenario, the time scales and edge weights are all set to unity throughout the simulation. In the second scenario, the time scales and edge weights are updated as per the solution of Problem~\protect\eqref{opt} during the time period $t\in[2,3]$ of the disturbance. One can see in Figure~\ref{fig:edgeStates} that the disturbance across the edge states is almost completely rejected when applying both the edge weight and time scale updates as described. 

The trajectories of the agents in the $(z,y)$ plane are depicted in Figure \ref{fig:formation} for both scenarios. For the scenario with the updates of both the edge weights and time scales, one can see the robustness of the reference trajectory tracking of the agents and the almost complete rejection of the disturbance.
Figure \ref{fig:formation} also shows the formation of the agents at the end of the simulations. 
The endpoints of the agents at the end of the simulation in the scenario with the updates of both the time scales and edge weights are closer to the desired formation than those corresponding to the scenario with non-updated non-optimized edge weights and time scales, indicating a more effective rejection of the disturbance.
A video of the positions of the agents over time in both scenarios is available at~\cite{oursoftware}.

\begin{figure}
  \centering
  \includegraphics[width=\columnwidth]{./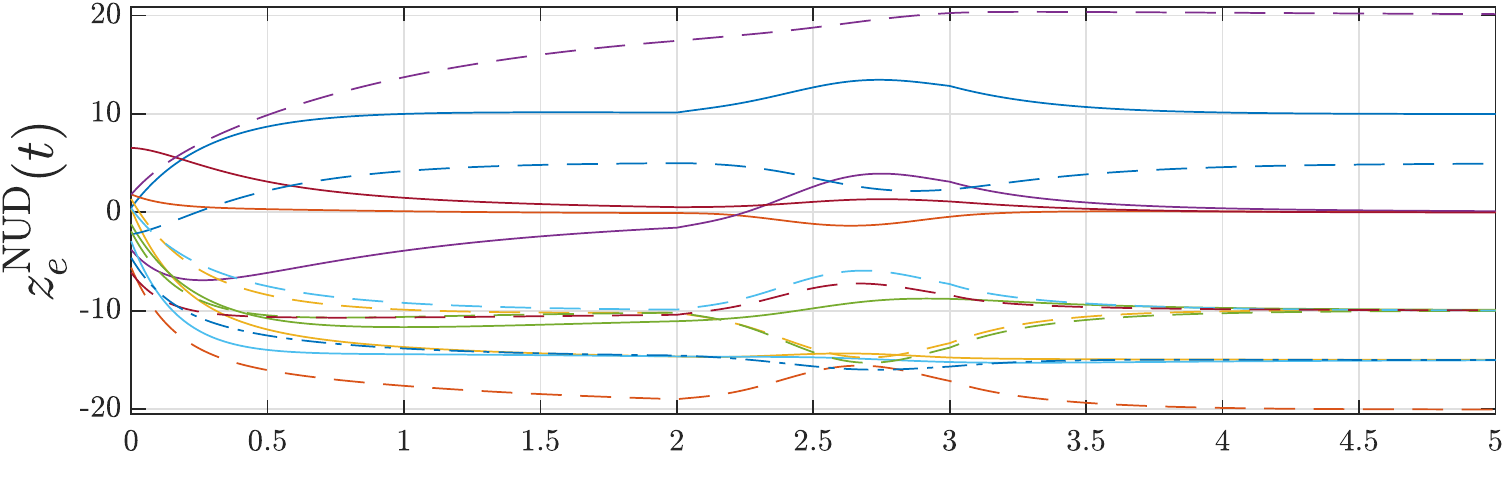}\\
  \includegraphics[width=\columnwidth]{./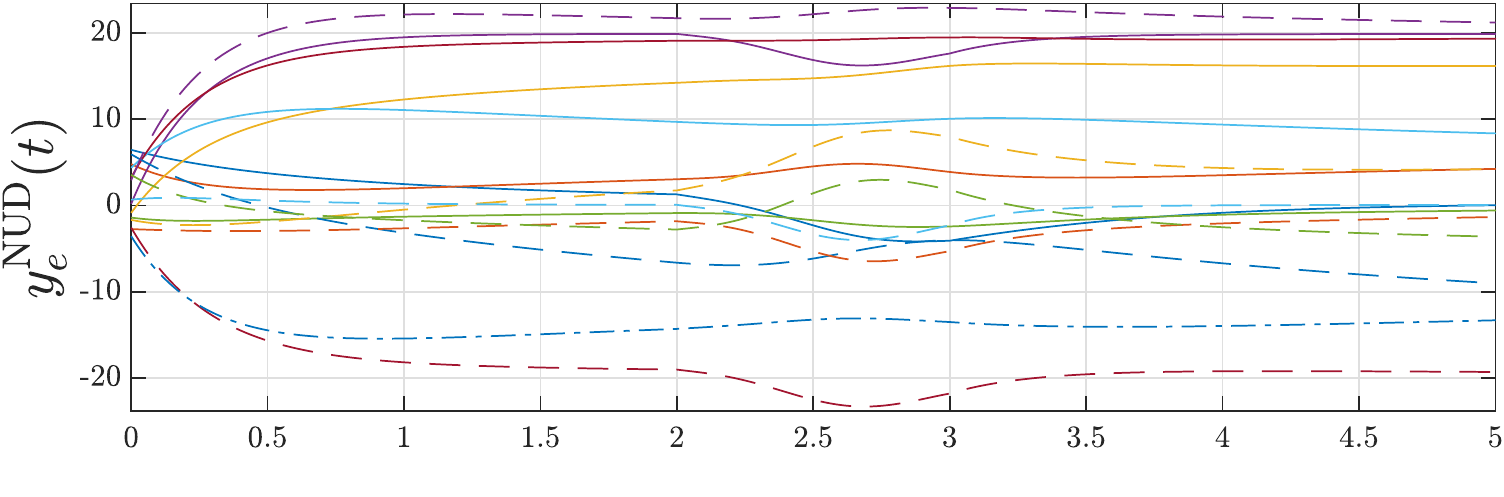}\\
  \includegraphics[width=\columnwidth]{./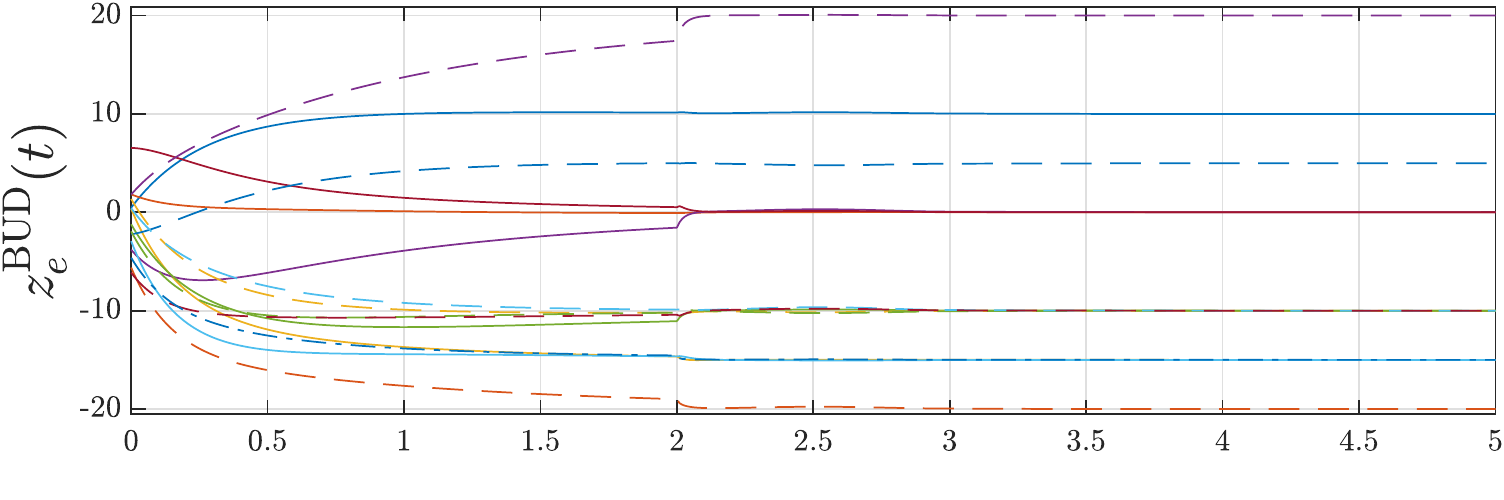}\\
  \includegraphics[width=\columnwidth]{./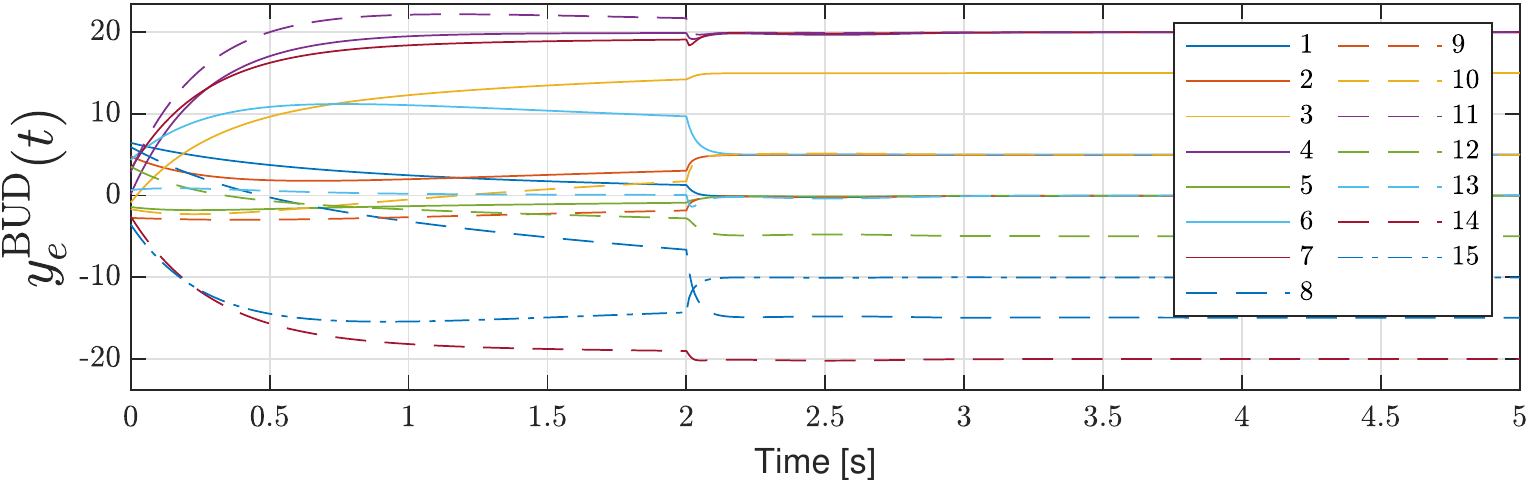}
  \caption{Edge states in $z,y$ directions over time. A disturbance is applied between $2\leq t\leq 3$. Test cases from top to bottom are with no updates (NUD) and both edge weight and time scale updates (BUD). Edge states are numbered in the legend in accordance to Figure \ref{fig:graph}.}
  \label{fig:edgeStates}
\end{figure}

\section{CONCLUSION}
\label{sec:conclusion}
This paper considers the $\Hinf$ performance problem for an edge variant of the consensus protocol on weighted graphs consisting of time-scaled nodes. 
Expressions of and bounds on the $\Hinf$-norm of the system of interest are provided. 
Specialized $\Hinf$-norm expressions are derived for the case of graphs with equal edge weights. 
Looser, but simpler, bounds on the $\Hinf$-norm are also derived for spanning tree graphs. 
Using the computed expressions and bounds, it is shown that the $\Hinf$-norm is minimized when the network is operated at the fastest time scales and largest edge weights. 
Furthermore, if such an operation is not possible/desirable (for example, in distributed systems consisting of non-homogeneous plants whose natural dynamics operate on different ranges of time scales), a versatile optimization setup is proposed for the selection of these parameters. 
The usefulness of the proposed optimization paradigm is illustrated via a formation control example with non-homogeneous agents. 
A potential direction for future research includes designing structure-preserving model reduction techniques, similar to the ones considered in \cite{AbouJaoudeBalancedTruncation,AbouJaoudeCoprimeFactors,AbouJaoudeLPV,AbouJaoudeCoprimeFactorsIJC}, that allow for reducing the system dynamics while retaining the network interpretation of the reduced-order system.


\begin{thebibliography}{10}
\expandafter\ifx\csname url\endcsname\relax
  \def\url#1{\texttt{#1}}\fi
\expandafter\ifx\csname urlprefix\endcsname\relax\def\urlprefix{URL }\fi
\expandafter\ifx\csname href\endcsname\relax
  \def\href#1#2{#2} \def\path#1{#1}\fi

\bibitem{annoni2018framework}
J.~Annoni, C.~Bay, K.~Johnson, E.~Dall’Anese, E.~Quon, T.~Kemper, P.~Fleming,
  Wind direction estimation using {SCADA} data with consensus-based
  optimization, Wind Energy Science 4 (2019) 355--368.

\bibitem{Rossi2018}
F.~Rossi, S.~Bandyopadhyay, M.~Wolf, M.~Pavone, {Review of multi-agent
  algorithms for collective behavior: A structural taxonomy}, IFAC-PapersOnLine
  51~(12) (2018) 112--117.

\bibitem{Olfati-Saber2005}
R.~Olfati-Saber, {Distributed Kalman filter with embedded consensus filters},
  in: Proceedings of the 44th IEEE Conference on Decision and Control and the
  European Control Conference, 2005, pp. 8179--8184.

\bibitem{Schmidt2009}
A.~Schmidt, J.~M. Moura, {Field inversion by consensus and compressed sensing},
  in: Proceedings of the IEEE International Conference on Acoustics, Speech and
  Signal Processing, 2009, pp. 2417--2420.

\bibitem{Behavior2007}
W.~Ren, R.~W. Beard, E.~M. Atkins, Information consensus in multivehicle
  cooperative control, IEEE Control Systems Magazine 27~(2) (2007) 71--82.

\bibitem{Tanner2004}
H.~G. Tanner, G.~J. Pappas, V.~Kumar, {Leader-to-formation stability}, IEEE
  Transactions on Robotics and Automation 20~(3) (2004) 443--455.

\bibitem{Chapman2015}
A.~Chapman, E.~Schoof, M.~Mesbahi, Online adaptive network design for
  disturbance rejection, in: Principles of Cyber-Physical Systems: An
  Interdisciplinary Approach, Cambridge University Press, Cambridge, 2015, p.
  127–161.

\bibitem{Foight2019ts}
D.~R. Foight, M.~{Hudoba de Badyn}, M.~Mesbahi, {Performance and design of
  consensus on matrix-weighted and time scaled graphs}, IEEE Transactions on
  Control of Network Systems 7~(4) (2020) 1812--1822.

\bibitem{Rahmani2009a}
A.~Rahmani, M.~Ji, M.~Mesbahi, M.~Egerstedt, {Controllability of multi-agent
  systems from a graph-theoretic perspective}, SIAM Journal on Control and
  Optimization 48~(1) (2009) 162--186.

\bibitem{Alemzadeh2017}
S.~Alemzadeh, M.~{Hudoba de Badyn}, M.~Mesbahi, {Controllability and
  stabilizability analysis of signed consensus networks}, in: Proceedings of
  the IEEE Conference on Control Technology and Applications, 2017, pp. 55--60.

\bibitem{Barooah2008}
P.~Barooah, J.~P. Hespanha, {Estimation from relative measurements: Electrical
  analogy and large graphs}, IEEE Transactions on Signal Processing 56~(6)
  (2008) 2181--2193.

\bibitem{Chapman2013a}
A.~Chapman, M.~Mesbahi, {Semi-autonomous consensus: Network measures and
  adaptive trees}, IEEE Transactions on Automatic Control 58~(1) (2013) 19--31.

\bibitem{Zelazo2011a}
D.~Zelazo, M.~Mesbahi, {Edge agreement: Graph-theoretic performance bounds and
  passivity analysis}, IEEE Transactions on Automatic Control 56~(3) (2011)
  544--555.

\bibitem{HudobadeBadyn2021}
M.~{Hudoba de Badyn}, M.~Mesbahi, {$H_2$ performance of series-parallel
  networks: A compositional perspective}, IEEE Transactions on Automatic
  Control 66~(1) (2021) 354--361.

\bibitem{DeBadyn2020}
M.~{Hudoba de Badyn}, D.~R. Foight, D.~Calderone, M.~Mesbahi, R.~S. Smith,
  {Graph-theoretic optimization for edge consensus}, in: (To appear)
  Proceedings of the 24th International Symposium on the Mathematical Theory of
  Networks and Systems, 2020, pp. 1--6.

\bibitem{Bamieh2012}
B.~Bamieh, M.~R. Jovanovi{\'{c}}, P.~Mitra, S.~Patterson, {Coherence in
  large-scale networks: Dimension-dependent limitations of local feedback},
  IEEE Transactions on Automatic Control 57~(9) (2012) 2235--2249.

\bibitem{Patterson2010a}
S.~Patterson, B.~Bamieh, {Leader selection for optimal network coherence}, in:
  Proceedings of the IEEE Conference on Decision and Control, 2010, pp.
  2692--2697.

\bibitem{Patterson2014}
S.~Patterson, B.~Bamieh, {Consensus and coherence in fractal networks}, IEEE
  Transactions on Control of Network Systems 1~(4) (2014) 338--348.

\bibitem{Lin2008}
P.~Lin, Y.~Jia, L.~Li, {Distributed robust $H_\infty$ consensus control in
  directed networks of agents with time-delay}, Systems and Control Letters
  57~(8) (2008) 643--653.

\bibitem{Shen2010}
B.~Shen, Z.~Wang, Y.~S. Hung, {Distributed $H_\infty$-consensus filtering in
  sensor networks with multiple missing measurements: The finite-horizon case},
  Automatica 46~(10) (2010) 1682--1688.

\bibitem{Sandhu2005}
J.~Sandhu, M.~Mesbahi, T.~Tsukamaki, {Relative sensing networks: Observability,
  estimation, and the control structure}, in: Proceedings of the 44th IEEE
  Conference on Decision and Control, and the European Control Conference,
  2005, pp. 6400--6405.

\bibitem{Sandhu2009}
J.~Sandhu, M.~Mesbahi, T.~Tsukamaki, {On the control and estimation over
  relative sensing networks}, IEEE Transactions on Automatic Control 54~(12)
  (2009) 2859--2863.

\bibitem{Smith2005}
R.~S. Smith, F.~Y. Hadaegh, {Control of deep-space formation-flying spacecraft;
  relative sensing and switched information}, Journal of Guidance, Control, and
  Dynamics 28~(1) (2005) 106--114.

\bibitem{Foight2019a}
D.~R. Foight, M.~{Hudoba de Badyn}, M.~Mesbahi, Time scale design for network
  resilience, in: Proceedings of the 58th IEEE Conference on Decision and
  Control, 2019, pp. 2096--2101.

\bibitem{nawrath2010distinguishing}
J.~Nawrath, M.~C. Romano, M.~Thiel, I.~Z. Kiss, M.~Wickramasinghe, J.~Timmer,
  J.~Kurths, B.~Schelter, Distinguishing direct from indirect interactions in
  oscillatory networks with multiple time scales, Physical Review Letters
  104~(3) (2010) 038701.

\bibitem{honey2007network}
C.~J. Honey, R.~K{\"o}tter, M.~Breakspear, O.~Sporns, Network structure of
  cerebral cortex shapes functional connectivity on multiple time scales,
  Proceedings of the National Academy of Sciences 104~(24) (2007) 10240--10245.

\bibitem{flack2012multiple}
J.~C. Flack, Multiple time-scales and the developmental dynamics of social
  systems, Philosophical Transactions of the Royal Society of London. Series B:
  Biological Sciences 367~(1597) (2012) 1802--1810.

\bibitem{yamashita2008emergence}
Y.~Yamashita, J.~Tani, Emergence of functional hierarchy in a multiple
  timescale neural network model: A humanoid robot experiment, PLOS
  Computational Biology 4~(11) (2008) e1000220.

\bibitem{romeres2013novel}
D.~Romeres, F.~D{\"o}rfler, F.~Bullo, Novel results on slow coherency in
  consensus and power networks, in: European Control Conference, 2013, pp.
  742--747.

\bibitem{chow1990singular}
J.~Chow, J.~Winkelman, M.~Pai, P.~Sauer, Singular perturbation analysis of
  large-scale power systems, International Journal of Electrical Power \&
  Energy Systems 12~(2) (1990) 117--126.

\bibitem{lewien2019time}
P.~Lewien, A.~Chapman, Time-scale separation on networks for multi-city
  epidemics, in: IEEE 58th Conference on Decision and Control, 2019, pp.
  746--751.

\bibitem{Chapman2016}
A.~Chapman, M.~Mesbahi, Multiple time-scales in network-of-networks, in:
  Proceedings of the American Control Conference, 2016, pp. 5563--5568.

\bibitem{Foight2019}
D.~R. Foight, M.~Mesbahi, Influenced consensus for multi-scale networks, in:
  Proceedings of the American Control Conference, 2019, pp. 2753--2758.

\bibitem{Awad2018}
A.~Awad, A.~Chapman, E.~Schoof, A.~Narang-Siddarth, M.~Mesbahi, {Time-scale
  separation in networks: State-dependent graphs and consensus tracking}, IEEE
  Transactions on Control of Network Systems 6~(1) (2019) 104--114.

\bibitem{Mesbahi2010}
M.~Mesbahi, M.~Egerstedt, {Graph-Theoretic Methods in Multiagent Networks},
  Princeton University Press, 2010.

\bibitem{GTSC}
D.~Zelazo, M.~Mesbahi, M.~A. Belabbas, Graph theory in systems and controls,
  in: Proceedings of the IEEE Conference on Decision and Control, 2018, pp.
  6168--6179.

\bibitem{JohnsonHorn_MatrixAnalysis}
R.~A. Horn, C.~R. Johnson, Matrix Analysis, 2nd Edition, Cambridge University
  Press, Cambridge, 2012.

\bibitem{robustcontrol}
K.~Zhou, J.~C. Doyle, K.~Glover, Robust and Optimal Control, Prentice Hall,
  Upper Saddle River, NJ, 1996.

\bibitem{bhatia_matrixanalysis}
R.~Bhatia, Matrix Analysis, Vol. 169, Springer Science \& Business Media, New
  York, NY, 1996.

\bibitem{geir}
G.~E. Dullerud, F.~Paganini, A Course in Robust Control Theory: A Convex
  Approach, Vol.~36, Springer Science \& Business Media, New York, NY, 2000.

\bibitem{strang_linalgebra}
G.~Strang, Linear Algebra and its Applications, 4th Edition, Thomson,
  Brooks/Cole, Belmont, CA, 2006.

\bibitem{randomgraphs}
B.~Bollob{\'{a}}s, Random Graphs, 2nd Edition, Cambridge University Press,
  Cambridge, 2001.

\bibitem{YALMIP}
J.~Lofberg, {YALMIP}: A toolbox for modeling and optimization in {MATLAB}, IEEE
  International Conference on Robotics and Automation (2004) 284--289.

\bibitem{SDPT3}
R.~H. Tütüncü, K.~C. Toh, M.~J. Todd, Solving semidefinite-quadratic-linear
  programs using {SDPT3}, Mathematical Programming, Series B 95 (2003)
  189--217.

\bibitem{abuarqoub2017dynamic}
A.~Abuarqoub, M.~Hammoudeh, B.~Adebisi, S.~Jabbar, A.~Bounceur, H.~Al-Bashar,
  Dynamic clustering and management of mobile wireless sensor networks,
  Computer Networks 117 (2017) 62--75.

\bibitem{oursoftware}
O.~Farhat, D.~Abou~Jaoude, M.~Hudoba~de Badyn, {Supplementary video for
  ``$\mathcal{H}_\infty$ Network Optimization for Edge Consensus''}, doi:
  10.3929/ethz-b-000479189 (2021).

\bibitem{AbouJaoudeBalancedTruncation}
D.~{Abou Jaoude}, M.~Farhood, Balanced truncation model reduction of
  nonstationary systems interconnected over arbitrary graphs, Automatica 85
  (2017) 405--411.

\bibitem{AbouJaoudeCoprimeFactors}
D.~{Abou Jaoude}, M.~Farhood, Coprime factors model reduction of spatially
  distributed {LTV} systems over arbitrary graphs, IEEE Transactions on
  Automatic Control 62~(10) (2017) 5254--5261.

\bibitem{AbouJaoudeLPV}
D.~{Abou Jaoude}, M.~Farhood, Model reduction of distributed nonstationary
  {LPV} systems, European Journal of Control 40 (2018) 27--39.

\bibitem{AbouJaoudeCoprimeFactorsIJC}
D.~{Abou Jaoude}, M.~Farhood, Coprime factors reduction of distributed
  nonstationary {LPV} systems, International Journal of Control 92~(11) (2019)
  2571--2583.

\end{thebibliography}
\end{document}